\providecommand*{\input@path}{}
\g@addto@macro\input@path{{../common/}}
  \providecommand*{\toclevel@author}{999}
  \providecommand*{\toclevel@title}{0}
\theoremstyle{plain}
  \newtheorem{theorem}{Theorem}
  \newtheorem{proposition}{Proposition}
  \newtheorem{lemma}{Lemma}
  \newtheorem{corollary}{Corollary}
\theoremstyle{definition}
  \newtheorem{definition}{Definition}
\theoremstyle{remark}
\pgfplotsset{every tick label/.append style={font=\scriptsize}}
\newenvironment{customlegend}[1][]{
	\begingroup
	\csname pgfplots@init@cleared@structures\endcsname
	\pgfplotsset{#1}
}{
	\csname pgfplots@createlegend\endcsname
	\endgroup
}
\def\addlegendimage{\csname pgfplots@addlegendimage\endcsname}
\pgfplotsset{
	cycle list={%
		{draw=black,mark=star,solid},
		{draw=black, mark=square,solid},
		{draw=black,mark=+,solid},
		{black,mark=o},
		{draw=black, mark=none,solid}}
}
\pgfplotsset{every tick label/.append style={font=\scriptsize}}
\def\addlegendimage{\csname pgfplots@addlegendimage\endcsname}
\pgfplotsset{
	cycle list={%
		{draw=black,mark=star,solid},
		{draw=black, mark=square,solid},
		{draw=black,mark=+,solid},
		{black,mark=o},
		{draw=black, mark=none,solid}}
}
\definecolor{mycolor3}{rgb}{0.10000,0.80000,0.15000}
\definecolor{mycolor1-fig}{rgb}{0.00000,0.20000,0.50000}%
\definecolor{mycolor2-fig}{rgb}{0.00000,0.50000,0.00000}%
\definecolor{mycolor3-fig}{rgb}{0.85000,0.11800,0.09800}%
\newcommand{\tpmod}[1]{{\;(\operatorname{mod}\;#1)}}
\newcommand{\tmod}[1]{{\;({\rm{mod}}\; #1)}}
\newcommand{\supp}{\operatorname{supp}}
\begin{document}
\title{On a reduced digit-by-digit component-by-component construction of lattice point sets}

\author{Peter Kritzer\thanks{P. Kritzer is supported by the Austrian Science Fund, Project F5506, 
which is part of the Special Research Program ``Quasi-Monte Carlo Methods: Theory and Applications'', and Project P34808. 
For the purpose of open access, the authors have applied a CC BY public copyright licence to any author accepted manuscript version arising from this submission.}, 
Onyekachi Osisiogu\thanks{O Osisiogu is partially supported by the Austrian Science Fund, Project F5506, 
which is part of the Special Research Program ``Quasi-Monte Carlo Methods: Theory and Applications''.}}

\date{\today}

\maketitle
\begin{abstract}
\noindent In this paper, we study an efficient algorithm for constructing point sets underlying quasi-Monte Carlo integration rules 
for weighted Korobov classes. The algorithm presented is a reduced fast component-by-component digit-by-digit (CBC-DBD) algorithm, 
which useful for to situations where the weights in the function space show a sufficiently fast decay. 
The advantage of the algorithm presented here is that the computational effort can be independent of the dimension 
of the integration problem to be treated if suitable assumptions on the integrand are met.
The new reduced CBC-DBD algorithm is designed to work for the construction of lattice point sets, 
and the corresponding integration rules (so-called lattice rules) can be used to treat functions in different 
kinds of function spaces. We show that the integration rules constructed by our algorithm satisfy error 
bounds of almost optimal convergence order. Furthermore, we give details on an efficient implementation such that we obtain a considerable 
speed-up of a previously known CBC-DBD algorithm that has been studied in \cite{EKNO21}. 
This improvement is illustrated by numerical results.
\end{abstract}

\noindent\textbf{Keywords:} Numerical integration; lattice points; quasi-Monte Carlo methods; component-by-component construction; digit-by-digit construction; weighted function spaces; fast implementation.

\noindent \textbf{2010 MSC:} 65D30, 65D32, 41A55, 41A63.

\section{Introduction}\label{sec:intro}
In  this  paper  we  study  numerical  integration  of functions  defined  over  the $d$-dimensional unit cube. 
For an integrable function $f: [0,1)^d\rightarrow \R$, we denote the integral of $f$ by  
\[ 
I_d(f) := \int_{[0,1]^d} f(\bsx) \rd\bsx,
\] 
and we study the efficient construction of high-dimensional \emph{quadrature rules}  
of the form
\begin{equation*} 
Q_N(f, (\bsx_k)_{k=0}^{N-1}) :=	\frac{1}{N}\sum_{k=0}^{N-1}  \, f(\bsx_k) 
\end{equation*} 
for numerically approximating $I_d$. We assume that the integrand $f$ lies in a Banach space $(\calF,\norm{\cdot}_{\calF})$, 
and that the integration nodes $\bsx_0,\ldots,\bsx_{N-1}\in [0,1)^d$ are to be chosen deterministically.  
Here $Q_N$ is called a \emph{quasi-Monte Carlo (QMC)} rule, which is a special case of an equal-weight integration rule.
One way to measure the quality of a QMC integration rule $Q_N$ is to consider the \emph{worst-case error} in the unit ball of the 
space $(\calF,\norm{\cdot}_{\calF}),$ i.e.,
\[	
e_{N,d}(Q_N,\calF) :=	\sup_{\substack{f \in \calF \\ \|f\|_{\calF} \le 1}} \left|I_d(f) - Q_N(f, (\bsx_k)_{k=0}^{N-1}) \right|. 
\]
In general, it is highly non-trivial to choose the set of integration nodes such that the resulting rule has a low worst-case error. 
There are two main types of QMC point sets $\mathcal{P} = \{\bsx_0, \bsx_1, \dots, \bsx_{N-1}\}$, 
namely \emph{lattice point sets} and \emph{digital nets}. In this paper we consider lattice point sets, 
which are obtained by considering discrete subsets of $\R^d$ that are closed under both addition and subtraction 
and contain $\Z^d$ as a subset; by intersecting such a discrete set with the $d$-dimensional 
unit cube $[0,1)^d$, we then obtain a lattice point set. We are interested in a special kind of 
lattice point sets that is essentially based on one \emph{generating vector} $\bsz$. 
These lattice point sets were first introduced by Korobov \cite{Kor59} and independently by Hlawka \cite{H62} 
and are among the most prominently studied QMC point sets to approximate multivariate integrals 
(see standard textbooks such as \cite{DKP22, HW81, N92b, SJ94}, and also \cite{DKS13}). 
For a natural number $N \in \N$ and an integer generating vector $\bsz \in \{1,2,\dots,N-1\}^d$, a point set with points of the form
\[  
 \bsx_k := \left\lbrace \frac{k}{N}\bsz\right\rbrace = \left( \left\{ \frac{k z_1}{N}\right\} , \ldots , 
 \left\{ \frac{k z_d}{N}\right\} \right)\in [0, 1)^d,\qquad\text{for } k = 0, 1, \ldots, N-1,
\] 
is called a \emph{rank-1 lattice point set}, which we shall sometimes denote by $P(\bsz,N)$. 
For vectors $\bsx \in\R^d$ we apply $\{\cdot\}$ component-wise, 
where $\{x\}=x-\lfloor x \rfloor$ denotes the fractional part of $x$. A QMC rule using such a point set as integration nodes 
is called a \emph{(rank-1) lattice rule} with generating vector $\bsz$. We remark that, given $N$ and $d$, 
a rank-1 lattice rule is completely determined by the choice of the generating vector $\bsz=(z_1,\ldots,z_d) \in \bbZ_N^d$, 
where we write $\bbZ_N := \{0, \ldots, N-1\}$. However, it should be obvious that not every choice of a generating vector $\bsz$ 
also yields a rank-1 lattice rule with good quality for approximating the integral, and it is usually necessary to 
tailor the choice of the integration nodes to the function space $\calF$ under consideration. For lattice rules we consider 
certain Banach spaces which are based on assuming sufficient decay of the Fourier coefficients of 
their elements to guarantee certain smoothness properties. These spaces are called weighted Korobov classes, 
which we will denote by $E_{d,\bsgamma}^{\alpha}$, where $d$ denotes the number of variables the functions depend on, 
$\alpha>1$ is a real number frequently referred to as the smoothness parameter, and $\bsgamma = (\gamma_j)_{j\ge 1}$ 
is a sequence of strictly positive reals, which model the importance of different coordinates. 
The idea of these additional parameters $\gamma_j$, which we refer to as weights in the definition of the function spaces 
under consideration, goes back to Sloan and Wo\'{z}niakowski \cite{SW98}, see also \cite{SW01,Hic98}, 
and this will be made more precise by incorporating the weights in the norm of the space $E_{d,\bsgamma}^{\alpha}$ in Section~\ref{sec:space}. 
If we denote the variables in the integration problem by $\bsx=(x_1,\ldots,x_d)$,
a small value of $\gamma_{j}$ corresponds to a low influence of the variable $x_j$, 
while a large $\gamma_{j}$ means high influence of $x_j$.  
Regarding the role of these weights in integration problems, we favor a situation in which the weights decay
sufficiently fast for coordinates with increasing indices, which helps in vanquishing the curse of dimensionality 
that is inherent to many high-dimensional problems. Indeed, under certain summability conditions on the weights, 
it is even possible to obtain bounds on the integration error that do not depend on the dimension of the problem at all. 
Such a situation is called \emph{strong polynomial tractability}, see, e.g., \cite{NW08}, for a general reference.

Regarding the construction of generating vectors, for dimensions $d\le 2$ explicit constructions of good generating vectors are available, 
see, e.g., \cite{SJ94,N92b}, but there are no explicit constructions of good generating vectors known for $d > 2$; 
how to find a generating vector $\bsz$ that guarantees a low worst-case error of integration in a given function space is 
a crucial question regarding rank-1 lattice rules. One way to find good generating vectors is the component-by-component (CBC) 
construction, which is based on a greedy algorithm choosing one component of the generating vector at a time. 
It was shown in \cite{K03} for prime $N$ (see also \cite{D04} for the case of composite $N$)
that it is possible to find generating vectors yielding essentially optimal results for certain spaces of $d$-variate 
functions by the CBC construction. Furthermore, it was shown in \cite{NC06b} that a fast CBC construction 
can reduce the computational cost of these algorithms to only $\mathcal{O}(dN\log N)$, by making use 
of a clever ordering of the points of a rank-1 lattice, and the Fast Fourier Transform.

In the paper \cite{DKLP15} the weights $\bsgamma=(\gamma_j)_{j\ge 1}$ in a given function space 
were incorporated in the CBC construction of lattice rules. 
To be more precise, depending on the weight $\gamma_j$, $j\in\N$, 
the search space for the $j$th component $z_j$ of the generating vector $\bsz$ 
can be shrinked as compared to the usual CBC construction,  
and this reduction is the motivation for the modified CBC algorithm to be called the 
``reduced'' CBC construction. Under suitable assumptions on the integrands considered, it 
was shown in \cite{DKLP15} that the lattice point sets obtained by the reduced CBC construction can 
still yield excellent results when used as integration nodes in a QMC rule, while the 
construction cost can be made independent of the dimension $d$ of the integration problem.
It was also shown in \cite{DKLP15} that the fast CBC construction principle of Nuyens and Cools, 
which was mentioned above, can be adapted to the reduced CBC construction.

A different construction algorithm for generating vectors of good lattice rules inspired by articles of Korobov (see \cite{Kor63} and \cite{Kor82}, 
and \cite{Kor82Eng} for an English translation) was dealt with in the recent paper \cite{EKNO21}, where a so-called component-by-component digit-by-digit 
(CBC-DBD) algorithm was presented. In this approach, one constructs the generating vector $\bsz$ in a component-by-component (CBC) fashion, in which each component $z_j$ 
is assembled digit-by-digit (DBD). That is, for a number  $N= 2^m$ of points we greedily construct the components $z_j$ bit-by-bit starting from the least significant bit. 
It was shown that the rules constructed by the CBC-DBD algorithm yield a convergence rate that is arbitrarily close to the optimal rate. 
It is also possible to have a fast implementation of the CBC-DBD algorithm which has a computational cost of $\mathcal{O}(dN \log N)$, 
and also numerical results on the performance of the CBC-DBD algorithm are presented in \cite{EKNO21}. We remark that the error analysis for the CBC-DBD algorithm 
is such that no prior knowledge of the smoothness parameter $\alpha$ is required to construct the generating vector. 
The resulting generating vector will still deliver the near optimal rate of convergence, for arbitrary smoothness parameters $\alpha > 1$, 
and this result can be stated independently of the dimension, assuming that the weights satisfy certain summability conditions. 

In the present paper, we would like to study a combination of the reduced CBC algorithm from \cite{DKLP15} 
and the CBC-DBD construction algorithm approach from \cite{EKNO21}, and present a \emph{reduced (fast) CBC-DBD algorithm} 
for generating vectors of good lattice rules with a large number of lattice points in high dimensions, assuming 
sufficiently fast decaying weights. This new algorithm will work by constructing the generating vector $\bsz$ 
in a component-by-component (CBC) fashion in which each component $z_j$ is assembled digit-by-digit (DBD). 
However, the search space for each component will be reduced in comparison to what is shown in \cite{EKNO21}, 
thus speeding up the construction method. We will show that for suitable choices of weights the construction cost of 
the reduced fast CBC-DBD algorithm can be made independent of the dimension. 
The main aim of this paper is to show that one can reduce the construction cost 
of the lattice rule by making the search for later components smaller than the earlier ones, while still achieving (strong) polynomial tractability. 

The structure of this paper is as follows. In the subsequent section (Section \ref{sec:space}), 
we outline the general setting, and then our construction algorithm as well as the main result 
are stated and proved in Section \ref{sec:construction}. Section \ref{sec:fast_impl} contains remarks on how to efficiently implement our 
newly found algorithm and some numerical results.

Regarding notation, let $\N :=\{1,2,\dots\}$ be the set of natural numbers, let $\N_0 := \{0,1,2,\dots\}$, 
and let $\Z$ be the set of integers. Additionally, let the set of non-zero integers be denoted by $\Z_* :=\Z\setminus \{0\}$. 
For sets of components we use fraktur font, e.g., $\setu \subset \N$. To denote the projection of a vector $\bsx \in [0,1)^d$ 
or $\bsell \in \Z^d$ to the components in a set $\setu \subseteq [d]:=\{1,\dots,d\}$, we write 
$\bsx_\setu := (x_j)_{j\in\setu}$ or $\bsell_\setu := (\ell_j)_{j\in\setu}$, respectively.


\section{Setting and overview}\label{sec:space}
We consider one-periodic real-valued $L_2$ functions defined on $[0,1]^d$ with absolutely convergent Fourier series
\begin{align*}
	f(\bsx)
	=
	\sum_{\bsell \in \Z^d} \hat{f}(\bsell) \, \rme^{2 \pi \icomp \bsell \cdot \bsx}
	\qquad \text{with} \qquad
	\hat{f}(\bsell)
	:=
	\int_{[0,1]^d} f(\bsx) \, \rme^{-2 \pi \icomp \bsell \cdot \bsx} \rd \bsx ,
\end{align*}
where $\hat{f}(\bsell)$ are the Fourier coefficients of $f$ and 
$\bsell \cdot \bsx := \sum_{j=1}^d \ell_j x_j = \ell_1x_1 + \cdots + \ell_dx_d$ is the vector dot product. 

It is known that rank-1 lattice point sets, as introduced in Section \ref{sec:intro}, have a 
property commonly referred to as the \emph{character property}. Indeed, for a rank-1 lattice point set 
with generating vector $\bsz\in\Z^d$, it is true that 
\[
 \frac1N \sum_{k=0}^{N-1} \rme^{2\pi\icomp (\bsell\cdot\bsz) \, k / N}
 =\begin{cases}    
  1 & \text{if } \bsell\cdot\bsz \equiv 0 \tmod{N}, \\ 
  0 & \text{otherwise.} 
\end{cases}
\]

We introduce the dual of a rank-1 lattice, which is of great importance in representing 
the error of approximating an integral by a lattice rule $Q_N$.

\begin{definition}[Dual lattice] 
Let $N\geq 2$  be  the  number  of  points and let $\bsz \in\{1,\dots,N-1\}^d$
be the generating vector of a rank-1 lattice point set $P(\bsz,N)$.
The dual lattice $\Lambda^\top (\bsz,N)$ of a rank-1 lattice $P(\bsz,N)$ is given by
\begin{align*}
\mathcal{D} = \Lambda^\top(\bsz,N) := \{\bsell \in \Z^d \ | \ \bsell \cdot \bsz \equiv 0 \tpmod{N} \}.
\end{align*}
For a non-empty set $\setu \subseteq [d],$ we can further define
\[
\mathcal{D}_\setu = \mathcal{D}_\setu(\bsz_\setu) := \{\bsell_\setu \in \Z^{|\setu|}_* \ | \ \bsell_\setu \cdot \bsz_\setu \equiv 0 \tpmod{N} \}.
\]
\end{definition}
Then, to obtain the integration error for a given function $f$ in terms of its Fourier coefficients, we interchange 
the order of summation and use the character property of the lattice points, and get
\begin{eqnarray*}
  Q_N(f, \bsz) - I_d(f)  
  &= &\sum_{\bszero \ne \bsell \in \bbZ^d} \hat{f}(\bsell) \left[ \frac1N \sum_{k=0}^{N-1} \rme^{2\pi\icomp (\bsell\cdot\bsz) \, k / N} \right] 
  \nonumber\\   
  &=&\!\!\!\!\sum_{\substack{\bszero \ne \bsell \in \Z^d\\\bsell\cdot \bsz\equiv 0 \tmod{N}}} \!\!\!\!\hat{f}(\bsell) \nonumber\\ 
  &=& \sum_{\bszero \ne \bsell \in \bbZ^d} \hat{f}(\bsell) \, \delta_N(\bsell \cdot \bsz),
\end{eqnarray*} 
where, for $a \in \Z$,
\[
\delta_N(a):=  \frac1N \sum_{k=0}^{N-1} \rme^{2\pi\icomp a k / N}=  
\begin{cases}    
  1 & \text{if } a \equiv 0 \tmod{N}, \\ 
  0 & \text{otherwise.} 
\end{cases}
\]

\noindent For a vector $\bsell \in \Z^d$, smoothness parameter $\alpha >1$, and strictly positive weights 
$\bsgamma=(\gamma_j)_{j\ge 1}$, we define
\begin{equation}\label{eq:decay_func} 
	r_{\alpha,\bsgamma}(\bsell)
	:=
	\prod_{j\in\supp(\bsell)} \gamma_j \abs{\ell_j}^\alpha,
\end{equation} 
where $\supp(\bsell) := \{ j \in [d]: \ell_j \ne 0 \}$ is the support of $\bsell$. 
As usual, we define the empty product as one such
that $r_{\alpha,\bsgamma}(\bszero) = 1$. Formally, we will also use a function $r_{1,\bsgamma}$ below, 
which is obtained by replacing $\alpha$ by 1 in \eqref{eq:decay_func}.

We then define our function space $E_{d,\bsgamma}^\alpha$ of one-periodic real-valued $L_2$ functions defined on $[0,1]^d$ 
with absolutely convergent Fourier series. The norm in this space is given as
\begin{align*}
	\|f\|_{E_{d,\bsgamma}^{\alpha}} 
	:= 
	\sup_{\bsell \in \Z^d} |\hat{f}(\bsell)| \, r_{\alpha,\bsgamma}(\bsell),
\end{align*}
and for $\alpha > 1$, dimension $d\in\N$, and positive weight sequence 
$\bsgamma = (\gamma_j)_{j\ge 1}$, our weighted function space is
\begin{equation}\label{eq:f_space}
	E_{d,\bsgamma}^{\alpha}
	:=
	\left\{f \in L^2([0,1]^d) \colon \|f\|_{E_{d,\bsgamma}^{\alpha}}  < \infty \right\}
	.
\end{equation}
It is known (see, for example, \cite{DKP22}) that the worst-case error of an
$N$-point rank-1 lattice rule generated by $\bsz \in \Z^d$ in the space $E_{d,\bsgamma}^\alpha$ is given by
\begin{equation}\label{eq:wce-infty}
		e_{N,d,\alpha,\bsgamma}(\bsz)
		=
		\sum_{\substack{\bszero \ne \bsell \in \mathcal{D}}} r_{\alpha,\bsgamma}^{-1}(\bsell) 
		=
		\sum_{\bszero \ne \bsell \in \Z^d} \frac{\delta_N(\bsell \cdot \bsz)}{r_{\alpha,\bsgamma}(\bsell)}
		.
	\end{equation}
We remark that the worst-case error \eqref{eq:wce-infty} is sometimes referred to as $P_\alpha$ in the literature on 
lattice rules. As pointed out above, it is the main goal of the present paper to state a new effective construction method 
for good instances of $\bsz$ such that $e_{N,d,\alpha,\bsgamma}(\bsz)$ is reasonably small.

It is well known, see, e.g., \cite{SJ94}, that the optimal convergence rate of the worst-case integration error in the space 
$E_{d,\bsgamma}^\alpha$ is of order $\calO(N^{-\alpha})$. Furthermore, we would like to remark that in the literature 
on lattice rules often a slightly different, but related, function space than $E_{d,\bsgamma}^\alpha$ is studied. Indeed, 
by modifying the norm in the space to $\left(\sum_{\bsell\in\Z^d} |\hat{f}(\bsell)|^2 \, r_{\alpha,\bsgamma}(\bsell)\right)^{1/2}$, 
one obtains a reproducing kernel Hilbert space, which is often referred to as ``Korobov space''. It is known (see, e.g., \cite{DKS13})
that the worst-case error in that space corresponds to the square root of the worst-case error in $E_{d,\bsgamma}^\alpha$, 
and so all results shown in the present paper can directly be carried over to related results for the Korobov space. 
Note also that, by using embedding results, any results on the error in $E_{s,\bsgamma}^\alpha$ 
immediately yield results on the worst-case error of ``tent-transformed'' lattice rules in certain Sobolev 
spaces of functions whose mixed partial derivatives of order~$1$ and~$2$ in each variable are square integrable 
(with $\alpha=2$ and $\alpha=4$ in our notation, respectively), see \cite{CKNS16,DKP22,DNP14,GSY19} for further details.

\subsection{The quality criterion used in this paper} 

In this section and the following we shall always assume that the number $N$ of points equals $2^m$, where $m\in\N$. 
Furthermore, we assume that we are given a sequence of non-negative integers $(w_j)_{j\ge 1}$ ordered in a non-decreasing fashion, 
to be more precise, $w_1,w_2,\ldots\in\N_0$ with $0= w_1 \le w_2 \le \cdots  $, and we put $Y_j:= 2^{w_j}$ for $j \in [d]$. 
The numbers $w_j$ for $j\ge 1$ are also called \emph{reduction indices}, as they will help to reduce the 
computational cost of the CBC-DBD algorithm studied in this paper.
In what follows, we set $d^*$ as the largest $j$ such that $w_j <m$, i.e., $d^*:= \max\{j\in \N: w_j <m\}$. 
Moreover, we will usually (unless stated otherwise) assume that the generating vector $\bsz$ of a 
lattice rule under consideration takes the form $\bsz = (Y_1z_1, \dots, Y_dz_d) \in \Z^d$, 
where $z_j \in \{1,3,5,\dots, 2^{m-w_j}-1\}$ for $j \in [d^*]$, and $z_j$ is odd if $j>d^*$ (in fact, we 
will often choose $z_j=1$ for $j>d^*$, but any choice of odd numbers $z_j$ for those $j$ would yield the same results for 
our lattices rules).

We define the following quality measure $T_\bsgamma(N,\bsz,\bsw)$ for $N \in \N$ and $\bsw = (w_1,\dots,w_d) \in \N_0^d$ 
with $0=w_1\le w_2\le \cdots$, by
\begin{equation}\label{eq:qual_meas}
	T_\gamma(N,\bsz,\bsw) 
	:= 
	\sum_{\bszero \ne \bsell \in M_{N,d,\bsw}} \frac{\delta_N(\bsell \cdot \bsz)}{r_{1,\bsgamma}(\bsell)}, 
\end{equation}      
where the set $M_{N,d,\bsw}$ is defined as follows. Let
\begin{equation*}
	M_{N,j,w_j}
	:=
	\{-(2^{\max(0,m-w_j)}-1),\ldots,2^{\max(0,m-w_j)}-1\}\qquad \text{for } j\in [d].
\end{equation*}
Furthermore, we write
\begin{equation*}
	\{M_{N,j,w_j}^*
	:=
	\left(\{-(2^{\max(0,m-w_j)}-1),\ldots,2^{\max(0,m-w_j)}-1\} \setminus \{0\}\right)
	.
\end{equation*}Note that 
$M_{N,j,w_j}^*=\emptyset$ for $j>d^*$. Furthermore, let
\[
 M_{N,d,\bsw}=M_{N,1,w_1}\times\cdots\times M_{N,d,w_d},
\qquad
\text{and}
\qquad
 M_{N,d,\bsw}^*=M_{N,1,w_1}^*\times\cdots\times M_{N,d,w_d}^*.
\]
Then, for $\emptyset\neq\fraku\subseteq [d]$, 
by $M_{N,\abs{\fraku},\bsw_{\fraku}}$ and $M_{N,\abs{\fraku},\bsw_{\fraku}}^*$, we denote the 
projections of $M_{N,d,\bsw}$ and $M_{N,d,\bsw}^*$ onto the components with indices 
in $\fraku$, respectively.
Likewise we define the auxiliary quantity
\begin{equation*}
	T_{\alpha,\bsgamma}(N,\bsz,\bsw) 
	:= 
	\sum_{\bszero \ne \bsell \in M_{N,d,\bsw}} \frac{\delta_N(\bsell \cdot \bsz)}{r_{\alpha,\bsgamma}(\bsell)}
	.
\end{equation*}
Let us denote the Riemann zeta function by $\zeta(\alpha) := \sum_{n=1}^{\infty} n^{-\alpha}$ for $\alpha>1$.
Moreover, we write, for a non-empty set $\setu\subseteq [d]$, 
\[
 \gamma_{\setu}:=\prod_{j\in\setu} \gamma_j,
\]
and we put $\gamma_\emptyset =1$.

The following proposition shows that we can use $T_{\alpha,\bsgamma}(N,\bsz,\bsw)$ as a suitable approximation to 
the worst-case error of a rank-1 lattice rule. 
\begin{proposition}\label{prop:trunc_error}
	Let $N=2^m$, let $\bsgamma = (\gamma_j)_{j\ge 1}$ be a sequence of positive weights, let 
	$\bsz = (Y_1z_1,\ldots,Y_dz_d) \in \Z^d$, and let $\bsw = (w_j)_{j\ge1}$ be a sequence in $\N_0$, 
	with $0=w_1\le w_2 \le \cdots$. Furthermore, assume that all $z_j$, $j\in [d]$, are odd, and that 
	$z_j \in \{1,3,5,\dots, 2^{m-w_j}-1\}$ for $j \in [d^*]$.
	Then, for $\alpha>1$, we have that
	\begin{eqnarray*}
	    e_{N,d,\alpha,\bsgamma}(\bsz) - T_{\alpha,\bsgamma}(N,\bsz,\bsw)
	    &=&
		\sum_{\bszero \ne \bsell \in \Z^d} \frac{\delta_N(\bsell \cdot \bsz)}{r_{\alpha,\bsgamma}(\bsell)}
		-
		\sum_{\bszero \ne \bsell \in M_{N,d,\bsw}} \frac{\delta_N(\bsell \cdot \bsz)}{r_{\alpha,\bsgamma}(\bsell)}\\ 
	    &\le&
		\sum_{\emptyset\neq \setu \subseteq [d]} \gamma_\setu \, \frac{(4\zeta (\alpha))^{\abs{\setu}}}{2^{\alpha\max(0,m-\max_{j\in\setu}w_j)}}
		.
	\end{eqnarray*}
\end{proposition}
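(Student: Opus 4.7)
The plan is to recognise the difference as a single tail series, bound the character factor trivially, and then evaluate the resulting tail by a product-of-sums decomposition organised by support.

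Starting from formula~\eqref{eq:wce-infty} together with the definition of $T_{\alpha,\bsgamma}(N,\bsz,\bsw)$, and observing that $\bszero \in M_{N,d,\bsw}$, the difference is just the series
\[
  \sum_{\substack{\bszero \ne \bsell \in \Z^d \\ \bsell \notin M_{N,d,\bsw}}} \frac{\delta_N(\bsell\cdot\bsz)}{r_{\alpha,\bsgamma}(\bsell)},
\]
so the first displayed equality of the proposition is immediate. To obtain a $\bsz$-uniform upper bound I would then discard the character factor via $\delta_N(\cdot)\le 1$, reducing the task to estimating $\sum_{\bszero \ne \bsell \notin M_{N,d,\bsw}} 1/r_{\alpha,\bsgamma}(\bsell)$ by an absolutely convergent multiple series.

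I would next organise this sum by the support $\setu := \supp(\bsell) \subseteq [d]$ and by the \emph{violation set} $\setv := \{j \in \setu : |\ell_j|\ge K_j\}$, where $K_j := 2^{\max(0,m-w_j)}$. Because $\ell_j = 0$ always lies in $M_{N,j,w_j}$, the condition $\bsell \notin M_{N,d,\bsw}$ is equivalent to $\setv \ne \emptyset$. For each pair $(\setu,\setv)$ with $\emptyset \ne \setv \subseteq \setu$, the corresponding partial sum factorises coordinate-wise as
\[
  \frac{1}{\gamma_\setu}\,\prod_{j \in \setu\setminus\setv}\Bigl(2\sum_{\ell=1}^{K_j-1} \ell^{-\alpha}\Bigr)\prod_{j\in\setv}\Bigl(2\sum_{\ell\ge K_j} \ell^{-\alpha}\Bigr),
\]
where the factor of $2$ accounts for the sign of $\ell_j$. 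The "interior" factors are dominated by $2\zeta(\alpha)$. The "tail" factors in the $\setv$-product are controlled by a dyadic estimate of $\sum_{\ell\ge K}\ell^{-\alpha}$ in terms of $K$. Since every $j \in \setv \subseteq \setu$ satisfies $K_j \ge K^\ast := 2^{\max(0,m-\max_{i\in\setu}w_i)}$, each of these tail factors contributes a common decay in~$K^\ast$. Finally, summing over the $2^{|\setu|}-1$ non-empty choices of $\setv \subseteq \setu$ absorbs a combinatorial factor $2^{|\setu|}$, thereby upgrading $(2\zeta(\alpha))^{|\setu|}$ to $(4\zeta(\alpha))^{|\setu|}$ in the final bound.

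The main delicate point is the sharpness of the tail estimate: one has to quantify $2\sum_{\ell\ge K_j}\ell^{-\alpha}$ precisely enough so that, after pulling out the common decay from the factors indexed by $\setv$, the resulting denominator is exactly $2^{\alpha \max(0,m-\max_{j\in\setu}w_j)}$ as stated. A separate treatment of the degenerate case $w_j\ge m$ (where $K_j=1$ and the "tail" reduces to the full $\zeta(\alpha)$-sum) is needed to merge it smoothly with the interior bound. Everything else is routine bookkeeping with geometric series and the standard estimate $\sum_{\ell=1}^{K-1}\ell^{-\alpha}\le \zeta(\alpha)$.
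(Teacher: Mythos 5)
There is a genuine gap, and it is fatal to the route you propose: you cannot discard the character factor via $\delta_N(\cdot)\le 1$ and still reach the stated bound. Once $\delta_N$ is dropped, the one-dimensional tail factor attached to a coordinate $j$ in your violation set is
\[
2\sum_{\ell\ge K_j}\ell^{-\alpha}\;\ge\;\frac{2}{\alpha-1}\,K_j^{\,1-\alpha},
\qquad K_j=2^{\max(0,m-w_j)},
\]
so the best decay you can extract is of order $K_j^{1-\alpha}$, not the $K_j^{-\alpha}$ that the proposition asserts. Already for $\setu=\{1\}$ (where $w_1=0$, $K_1=N$) your intermediate quantity is $\asymp N^{1-\alpha}$, which for large $N$ strictly exceeds the claimed bound $\gamma_1\,4\zeta(\alpha)N^{-\alpha}$; no sharpening of the tail estimate or bookkeeping over violation sets can recover the lost factor $K_j^{-1}$. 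The "delicate point" you flag at the end is therefore not a matter of quantifying the tail precisely enough --- the inequality simply goes the wrong way.

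The missing idea is that the character sum must be kept and exploited arithmetically. In the paper's argument, for $|\setu|=1$ the condition $\delta_N(\ell_j Y_j z_j)=1$ forces $\ell_j\equiv 0 \tmod{2^{\max(0,m-w_j)}}$, so the tail collapses to the multiples $\ell_j=t\,2^{\max(0,m-w_j)}$ and sums exactly to $2\zeta(\alpha)\,2^{-\alpha\max(0,m-w_j)}\gamma_j$. For $|\setu|>1$ one fixes the coordinate $i$ whose index is large, sets $b=\bsell_{\setu\setminus\{i\}}\cdot\bsz_{\setu\setminus\{i\}}$, and uses that the congruence $\ell_i 2^{w_i}z_i\equiv -b \tmod{N}$ has at most one solution $\ell_i$ in each block of $2^{\max(0,m-w_i)}$ consecutive integers (a Korobov-type counting lemma). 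This thins the tail by exactly the factor $2^{-\max(0,m-w_i)}$ that upgrades $K_i^{1-\alpha}$ to $K_i^{-\alpha}$. Note also that your coordinate-wise factorisation of the partial sums is itself only valid because you dropped $\delta_N$, which couples the coordinates; with $\delta_N$ retained one must instead sum over the choice of the "large" coordinate $i$ and handle the remaining coordinates with the unrestricted bound $2\zeta(\alpha)$, which is where the factor $|\setu|\le 2^{|\setu|}$ (and hence $(4\zeta(\alpha))^{|\setu|}$) actually comes from.
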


\begin{proof}
We can rewrite the difference $e_{N,d,\alpha,\bsgamma}(\bsz) - T_{\alpha,\bsgamma}(N,\bsz,\bsw)$ as 
\[
 \sum_{\emptyset\neq \fraku \subseteq [d]} 
 \left(\sum_{\bsell_{\fraku}\in\ZZ_*^{\abs{\fraku}}}
 \frac{\delta_N (\bsell_{\fraku}\cdot\bsz_{\fraku})}{r_{\alpha,\bsgamma_\setu} (\bsell_{\fraku})}
 -\sum_{\bsell_{\fraku}\in M_{N,|\fraku|,\bsw_{\fraku}}^*}
 \frac{\delta_N (\bsell_{\fraku}\cdot\bsz_{\fraku})}{r_{\alpha,\bsgamma_\setu} (\bsell_{\fraku})}\right),
\]
motivating us to define, for $\fraku\neq \emptyset$, $\fraku\subseteq [d]$,
\[
 T_{\fraku,\bsw_{\fraku}}:=
 \sum_{\bsell_{\fraku}\in\ZZ_*^{\abs{\fraku}}}
 \frac{\delta_N (\bsell_{\fraku}\cdot\bsz_{\fraku})}{r_{\alpha,\gamma_\setu} (\bsell_{\fraku})}
 -\sum_{\bsell_{\fraku}\in M_{N,|\fraku|,\bsw_{\fraku}}^*}
 \frac{\delta_N (\bsell_{\fraku}\cdot\bsz_{\fraku})}{r_{\alpha,\gamma_\fraku} (\bsell_{\fraku})}.
\]

In the following we distinguish two cases for $\emptyset\neq\fraku\subseteq [d]$, depending on whether $\abs{\fraku}=1$ 

\medskip

\textbf{Case 1:} Suppose that $\abs{\fraku}=1$, i.e., $\fraku=\{j\}$ for some $j\in[d]$. Then we have
\begin{eqnarray*}
T_{\fraku,\bsw_{\fraku}}
&=& T_{\{j\},w_j}\\
&=& \sum_{\ell_j \in \ZZ_*} \frac{\delta_N (\ell_j Y_j z_j)}{r_{\alpha,\gamma_{\{j\}}} (\ell_j)}
- \sum_{\ell_j \in M_{N,\{j\},w_j}^*} \frac{\delta_N (\ell_j Y_j z_j)}{r_{\alpha,\gamma_{\{j\}}} (\ell_j)}\\
&=& \sum_{\abs{\ell_j}\ge 2^{\max (0,m-w_j)}} \frac{\delta_N (\ell_j Y_j z_j)}{r_{\alpha,\gamma_{\{j\}}} (\ell_j)}\\
&=& \sum_{\abs{\ell_j}\ge 2^{\max (0,m-w_j)}} \gamma_{j} \frac{\delta_N (\ell_j Y_j z_j)}{\abs{\ell_j}^\alpha}\\ 
&=& 2\gamma_{j} \sum_{t=1}^\infty \frac{1}{(t2^{\max (0,m-w_j)})^\alpha}=\frac{2\zeta (\alpha)}{2^{\alpha \max (0,m-w_j)}}\gamma_{j},
\end{eqnarray*} 
which follows from the fact that if $\ell_j Y_j z_j \equiv 0 \text{ (mod $N$)}$, this is equivalent to 
$\ell_j \equiv 0 \text{ (mod $2^{\max (0,m-w_j)}$})$.

\medskip

\textbf{Case 2:} 
For $\fraku\neq \emptyset$, $\fraku\subseteq [d]$ with $\abs{\fraku}>1$, and $i\in\fraku$, 
we write, for short, $\bsell_{\fraku\setminus \{i\}}, \bsz_{\fraku\setminus\{i\}}\in\ZZ^{\abs{\fraku}-1}$ to denote 
the projections of $\bsell$ and $\bsz$, respectively, onto those components with indices in $\fraku\setminus \{i\}$.

In this case, we estimate 
\[
 T_{\fraku,\bsw_{\fraku}} \le \sum_{i\in\fraku} \sum_{\bsell_{\fraku \setminus \{i\}}\in\ZZ_*^{\abs{\fraku}-1}} 
 \sum_{\abs{\ell_i}\ge 2^{\max (0,m-w_i)}} 
 \frac{\delta_N (\ell_i Y_i z_i + \bsell_{\fraku \setminus \{i\}}\cdot\bsz_{\fraku \setminus \{i\}})}{r_{\alpha,\gamma_{\fraku}}(\bsell_{\fraku})}.
\]
For $i\in\fraku$ and $\bsell_{\fraku \setminus \{i\}}\in\ZZ_*^{\abs{\fraku}-1}$, we write 
$b:= \bsell_{\fraku \setminus \{i\}}\cdot\bsz_{\fraku \setminus \{i\}}$, and consider the expression
\begin{eqnarray*}
 \lefteqn{\sum_{\abs{\ell_i}\ge 2^{\max (0,m-w_i)}} 
 \frac{\delta_N (\ell_i Y_i z_i + b)}{r_{\alpha,\gamma_{\fraku}}(\bsell_{\fraku})}
 =
 \gamma_{\fraku} \sum_{\abs{\ell_i}\ge 2^{\max (0,m-w_i)}} 
 \frac{\delta_N (\ell_i Y_i z_i + b)}{\prod_{j\in\fraku} \abs{\ell_j}^\alpha}}\\
 &=&\gamma_{\fraku} \left(\prod_{\substack{j\in\fraku\\ j\neq i}} \abs{\ell_j}^{-\alpha}\right) 
 \sum_{\abs{\ell_i}\ge 2^{\max (0,m-w_i)}} 
 \frac{\delta_N (\ell_i Y_i z_i + b)}{\abs{\ell_i}^\alpha}\\
 &=& \gamma_{\fraku} \left(\prod_{\substack{j\in\fraku\\ j\neq i}} \abs{\ell_j}^{-\alpha}\right) 
 \sum_{t=1}^\infty \sum_{\ell_i=t 2^{\max (0,m-w_i)}}^{(t+1) 2^{\max (0,m-w_i)}-1}
 \left[\frac{\delta_N (\ell_i Y_i z_i + b)}{\abs{\ell_i}^\alpha} + \frac{\delta_N (\ell_i Y_i z_i - b)}{\abs{\ell_i}^\alpha}\right]\\
 &\le& \gamma_{\fraku} \left(\prod_{\substack{j\in\fraku\\ j\neq i}} \abs{\ell_j}^{-\alpha}\right) 
 \sum_{t=1}^\infty \frac{1}{(t2^{\max (0,m-w_i)})^\alpha}
 \sum_{\ell_i=t 2^{\max (0,m-w_i)}}^{(t+1) 2^{\max (0,m-w_i)}-1}
 \left(\delta_N (\ell_i Y_i z_i + b)+\delta_N (\ell_i Y_i z_i - b)\right)\\
 &=& \gamma_{\fraku} \left(\prod_{\substack{j\in\fraku\\ j\neq i}} \abs{\ell_j}^{-\alpha}\right) 
 \frac{1}{2^{\alpha \max (0,m-w_i)}}
 \sum_{t=1}^\infty \frac{1}{t^\alpha}
 \sum_{\ell_i=t 2^{\max (0,m-w_i)}}^{(t+1) 2^{\max (0,m-w_i)}-1}
 \left(\delta_N (\ell_i Y_i z_i + b)+\delta_N (\ell_i Y_i z_i - b)\right)\\ 
 &=&  \gamma_{\fraku} \left(\prod_{\substack{j\in\fraku\\ j\neq i}} \abs{\ell_j}^{-\alpha}\right) 
 \frac{2\zeta (\alpha)}{2^{\alpha \max (0,m-w_i)}},
\end{eqnarray*} where the last equality follows from the fact that
\[ 
  \sum_{\ell_i=t 2^{\max (0,m-w_i)}}^{(t+1) 2^{\max (0,m-w_i)}-1}\delta_N (\ell_i Y_i z_i + b) = 1
\] 
holds since the congruence $\ell_i Y_i z_i + b = \ell_i 2^{w_i} z_i + b \equiv 0 \tmod{N}$ is equivalent to 
$\ell_i 2^{w_i} \equiv - z_i^{-1} b \tmod{N}$, and then the latter congruence can have at most one solution 
$\ell_i$ in $\{t 2^{\max (0,m-w_i)},\ldots, (t+1) 2^{\max (0,m-w_i)}-1 \}$, see also 
\cite[Corollary of Proposition 1]{Kor63}. Therefore, we can estimate $T_{\setu,\bsw_\setu}$ for $\abs{u}>1$, by
\begin{eqnarray*}
 T_{\fraku,\bsw_{\fraku}} &\le & \gamma_{\fraku} 2\zeta (\alpha) \sum_{i\in\fraku} \frac{1}{2^{\alpha \max (0,m-w_i)}}
 \sum_{\bsell_{\fraku \setminus \{i\}}\in\ZZ_*^{\abs{\fraku}-1}}\prod_{\substack{j\in\fraku\\ j\neq i}} \abs{\ell_j}^{-\alpha}\\
 &=& \gamma_{\fraku} 2\zeta (\alpha) \sum_{i\in\fraku} \frac{1}{2^{\alpha \max (0,m-w_i)}}
 \left(2\sum_{\ell=1}^\infty \frac{1}{\ell^\alpha}\right)^{\abs{\fraku}-1}\\
 &=& (2\zeta (\alpha))^{\abs{\fraku}} \gamma_{\fraku} \sum_{i\in\fraku} \frac{1}{2^{\alpha \max (0,m-w_i)}}\\
 &\le& \frac{(2\zeta (\alpha))^{\abs{\fraku}} \gamma_{\fraku} \abs{\fraku}}{2^{\alpha \max (0,m-\max_{i\in\fraku} w_i)}}\\
 &\le& \gamma_{\fraku}\frac{(4\zeta (\alpha))^{\abs{\fraku}}}{2^{\alpha \max (0,m-\max_{i\in\fraku} w_i)}}.
\end{eqnarray*}

In summary we obtain, by the results for both cases from above,
\[
 \sum_{\emptyset\neq\fraku \subseteq [d]} T_{\fraku,\bsw_{\fraku}} 
 \le \sum_{\emptyset\neq\fraku \subseteq [d]}
\gamma_\setu \frac{(4\zeta(\alpha))^{\abs{\fraku}}}{2^{\alpha\max(0,m-\max_{j\in\fraku} w_j)}},
\]
as claimed. 
\end{proof}

We have obtained a key ingredient for the worst-case error analysis 
from the result on the truncation error in Proposition \ref{prop:trunc_error}.

\section{The construction method for rank-1 lattice rules}\label{sec:construction}
In this section, we will introduce, formulate, and analyze a method for the construction of lattice rules. 
Firstly, we want to be able to estimate efficiently the quantity $T_\bsgamma(N,\bsz,\bsw)$ in \eqref{eq:qual_meas}, which is
needed for our construction method, and to do this we summarize some auxiliary statements which will be needed in the following analysis.

\subsection{Preliminary results}
The following lemma (see \cite{EKNO21} for a proof) shows that the function $\log (\sin^{-2}(\pi x) )$
can be written in terms of its truncated Fourier series with uniformly bounded remainder term. 
We also bear in mind that it cannot be evaluated in $x= 0$ and $x=1$.
\begin{lemma}\label{lemma:expr_target_function}
	Let $N \in \N$, then for any $x \in (0,1)$ there exists a $\tau(x) \in \R$ with $|\tau(x)| \le 1$ such that 
	\begin{equation*}
		\log (\sin^{-2}(\pi x) )
		=
		\log 4 + \sum_{\substack{\ell = -(N-1)\\ \ell\neq 0}}^{N-1} 
		\frac{\rme^{2\pi \icomp \ell x}}{|\ell|} + \frac{\tau(x)}{N \|x\|}
		=
		\sum_{\ell = -(N-1)}^{N-1} \frac{\rme^{2\pi \icomp \ell x}}{b(\ell)} + \frac{\tau(x)}{N \|x\|},
	\end{equation*}
	with coefficients
	\begin{align} \label{eq:def-b-dnint}
		b(\ell)
		&:= 
		\left\{\begin{array}{cc}
		\abs{\ell}, & {\text{for }} \ell \ne 0 , \\ 
		1/\log4, & {\text{for }} \ell = 0 ,
		\end{array}\right.
	\end{align}
   where $\|x\|$ denotes the distance to the nearest integer of $x$, i.e.,
   \[
		\|x\|
		:=
		\min\{\{x\},1-\{x\}\}.
   \]
\end{lemma}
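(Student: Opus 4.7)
The plan is to obtain the claimed identity as a truncated Fourier series of the $1$-periodic function $\log(\sin^{-2}(\pi x))$, together with an explicit tail bound of size $\calO(1/(N\|x\|))$. The two independent ingredients are (i) the exact Fourier expansion and (ii) an Abel-summation estimate controlling the truncated tail uniformly in $x\in(0,1)$.

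For (i) I would start from the classical identity
\[
    \log\bigl|2\sin(\pi x)\bigr| \;=\; -\sum_{\ell=1}^{\infty}\frac{\cos(2\pi\ell x)}{\ell},
    \qquad x\in(0,1),
\]
valid pointwise on $(0,1)$ (this is a standard consequence of integrating the Fourier series of the cotangent or of Abel summation of $\sum \rme^{2\pi\icomp \ell x}/\ell$). Multiplying by $-2$, exponentiating the left-hand side's coefficient, and using $2\cos(2\pi\ell x) = \rme^{2\pi\icomp\ell x} + \rme^{-2\pi\icomp\ell x}$ yields
\[
    \log\sin^{-2}(\pi x) \;=\; \log 4 \;+\; \sum_{\ell \in \Z\setminus\{0\}} \frac{\rme^{2\pi\icomp\ell x}}{|\ell|}.
\]
Interpreting $\log 4 = \rme^{2\pi\icomp\cdot 0\cdot x}/b(0)$ with $b(0)=1/\log 4$ already puts the $\ell=0$ term on the same footing and produces the second equality of the lemma.

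For (ii) I would separate off the truncated part with $|\ell|\le N-1$ and estimate the remainder
\[
    R_N(x) \;=\; \sum_{|\ell|\ge N}\frac{\rme^{2\pi\icomp\ell x}}{|\ell|}
    \;=\; 2\sum_{\ell = N}^{\infty}\frac{\cos(2\pi\ell x)}{\ell}.
\]
Using the closed form of the geometric sum, one has the elementary uniform bound
\[
    \Bigl|\sum_{\ell=N}^{M} \rme^{2\pi\icomp\ell x}\Bigr| \;\le\; \frac{1}{|\sin(\pi x)|} \;\le\; \frac{1}{2\|x\|},
\]
where the last step uses $|\sin(\pi x)|\ge 2\|x\|$ for $x\in(0,1)$. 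An Abel-summation-by-parts argument applied to $\sum_{\ell\ge N}\rme^{2\pi\icomp\ell x}/\ell$, with partial sums controlled by the previous display and with the monotone decreasing weights $1/\ell$ telescoping to $1/N$, then yields $|R_N(x)|\le 1/(N\|x\|)$. Defining $\tau(x) := N\|x\|\,R_N(x)$ gives a real quantity (since $R_N(x)$ is real-valued by the cosine form) satisfying $|\tau(x)|\le 1$, and we obtain the claimed remainder form $\tau(x)/(N\|x\|)$.

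The only non-routine step is the tail estimate: the bound $|\sin(\pi x)|\ge 2\|x\|$ is elementary, but writing out the Abel-summation carefully so that the resulting remainder constant is at most $1$ (and not merely some larger absolute constant) requires taking the sharper geometric-sum bound $1/|\sin(\pi x)|$ rather than the convenient $1/(2\|x\|)$ upper bound in the intermediate step, and tracking cancellation between the $\ell$ and $-\ell$ contributions carefully so the factor $2$ from symmetrization is absorbed. Everything else is standard Fourier analysis on the circle.
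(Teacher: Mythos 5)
Your argument is correct: the classical expansion $\log|2\sin(\pi x)|=-\sum_{\ell\ge1}\cos(2\pi\ell x)/\ell$, the geometric-sum bound $1/|\sin(\pi x)|\le 1/(2\|x\|)$, and Abel summation with the monotone weights $1/\ell$ combine exactly as you describe, with the factor $2$ from symmetrizing over $\pm\ell$ absorbed by the $2$ in $|\sin(\pi x)|\ge 2\|x\|$ to give the constant $1$. The paper itself gives no proof and only cites \cite{EKNO21}, where essentially this same Fourier-series-plus-partial-summation argument (going back to Korobov) is used, so your proposal matches the intended route.
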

What is more, we will make use of the following lemma, which was also proved in \cite{EKNO21}.
\begin{lemma} \label{lemma:diff_prod}
	For $j \in [d]$, let $u_j, v_j, \overline{u}_j$, and $r_j$ be real numbers which satisfy
	\begin{align*}
		(a) \quad u_j = v_j + r_j, \quad
		(b) \quad |u_j| \le \bar{u}_j, \quad
		(c) \quad \bar{u}_j \geq 1 .
	\end{align*}
	Then, for any subset $\emptyset \ne \setu \subseteq [d]$ there exists a $\theta_{\setu}$ with $|\theta_{\setu}| \le 1$ such that the following identity holds,
	\begin{align}\label{eq:diff_prod} 
		\prod_{j \in \setu} u_j
		&=
		\prod_{j \in \setu} v_j + \theta_{\setu} \left(\prod_{j \in \setu} (\bar{u}_j+|r_j|) \right) \sum_{j \in \setu} |r_j| .
	\end{align}
\end{lemma}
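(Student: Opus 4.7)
The plan is to start from a telescoping identity for $\prod_{j\in\setu} u_j - \prod_{j\in\setu} v_j$ and then use the bounds in (a)--(c) to control the resulting sum by the right-hand side of \eqref{eq:diff_prod}. Without loss of generality I may relabel the indices so that $\setu=\{1,\ldots,s\}$ with $s=\abs{\setu}$.

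First, I would invoke the standard algebraic telescoping identity
\begin{equation*}
\prod_{j=1}^s u_j - \prod_{j=1}^s v_j = \sum_{k=1}^s \left(\prod_{j<k} v_j\right)(u_k - v_k)\left(\prod_{j>k} u_j\right),
\end{equation*}
which one verifies by expanding the right-hand side and noting that consecutive summands cancel pairwise. Substituting $u_k-v_k=r_k$ from (a), it remains to show that this sum has absolute value at most $\bigl(\prod_{j=1}^s(\bar{u}_j+|r_j|)\bigr)\sum_{j=1}^s |r_j|$; once that is done, I define $\theta_{\setu}$ as the signed ratio of the actual difference to this dominating positive quantity (setting $\theta_{\setu}:=0$ if it vanishes), which automatically satisfies $\abs{\theta_{\setu}}\le 1$ and yields the desired identity.

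The estimate step itself is routine. From (a) and (b), $|v_j|\le|u_j|+|r_j|\le\bar{u}_j+|r_j|$, and trivially $|u_j|\le\bar{u}_j\le\bar{u}_j+|r_j|$, so every factor in the $k$th summand other than $r_k$ is dominated by the corresponding $\bar{u}_j+|r_j|$. This gives the bound $|r_k|\prod_{j\neq k}(\bar{u}_j+|r_j|)$ on the $k$th summand. Multiplying and dividing by $\bar{u}_k+|r_k|$, which is at least $1$ by hypothesis (c), can only enlarge the bound to $|r_k|\prod_{j=1}^s(\bar{u}_j+|r_j|)$; summing over $k$ yields exactly the claimed estimate.

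I do not expect any serious obstacle: the argument is essentially a book-keeping exercise around the telescoping decomposition. The one step requiring care is the inflation that inserts the missing factor $\bar{u}_k+|r_k|$ back into the product, and this is precisely where hypothesis (c) is used, since without $\bar{u}_k\ge 1$ one could have $\bar{u}_k+|r_k|<1$ and inserting such a factor would shrink rather than enlarge the bound, breaking the estimate.
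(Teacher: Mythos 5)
Your proof is correct and complete: the telescoping identity is valid, the bound $|v_j|\le|u_j|+|r_j|\le\bar{u}_j+|r_j|$ follows from (a) and (b), and you correctly isolate hypothesis (c) as the step that lets you reinsert the missing factor $\bar{u}_k+|r_k|\ge 1$ into the product (noting also that $\theta_{\setu}$ is well defined since the dominating quantity vanishes only when all $r_j=0$, in which case both sides agree). The paper itself gives no proof, deferring to \cite{EKNO21}, where the argument is an induction on the number of factors that unrolls to exactly your telescoping decomposition, so your route is essentially the standard one.
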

\subsection{The reduced CBC-DBD construction}
We are now ready to study the reduced component-by-component digit-by-digit (CBC-DBD) construction for lattice rules. 
To this end, we assume again throughout the section that $N$ is of the form $2^m$ for some positive integer $m$; 
this choice of $N$ is natural as the components of the generating vector will be constructed digit by digit, i.e., 
bit by bit as we consider $N$ to be a power of~$2$. We first show the following estimate on the quantity $T_\bsgamma(N,\bsz,\bsw)$, 
which already indicates the target function to be minimized in Algorithm \ref{alg:redCBCDBD} below.

\begin{theorem} \label{thm:T_target_CBCDBD}
Let $N=2^m$, with $m>3$, let $\bsgamma=(\gamma_j)_{j\ge 1}$ be positive weights, 
and let $\bsw = (w_j)_{j\ge 1}$ be a sequence of reduction indices in $\N_0$ with $0=w_1\le w_2 \le \cdots$. 
Furthermore, let $Y_j:= 2^{w_j}$ for $j \in [d]$, and let $\bsz = (Y_1z_1, \ldots, Y_dz_d)\in\{1,\ldots,N-1\}^d$, 
where we assume that all $z_j$, $j\in [d]$, are odd, and that $z_j\in\{1,3,\ldots,2^{m-w_j}-1\}$ for $j\in [d^*]$. 
Then the following estimate holds,
\begin{align*}
	T_\bsgamma(N,\bsz,\bsw) 
	&\le\sum_{\emptyset\neq \fraku\subseteq [d^*] } \frac{\gamma_{\fraku}}{N}
 2^{w_{j_{\abs{\fraku}}}+1}  (6\log N)^{\abs{\fraku}} (1+\log N)\\
 &+ \sum_{\emptyset\neq \fraku\subseteq [d^*] } \frac{\gamma_{\fraku}}{N} H_{N,\fraku,\bsw}
 - \sum_{\emptyset\neq \fraku\subseteq [d^*] } \gamma_{\fraku} (\log 4)^{\abs{\fraku}}
	,
\end{align*}
where $j_{\abs{\fraku}}$ denotes the largest element of $\fraku$ for $\emptyset\neq \fraku\subseteq [d^*] $,
where $d^*:= \max\{j\in \N: w_j <m\}$, and where 
\[H_{N,\fraku,\bsw}:= \sum_{t=0}^{2^{w_{j_{\abs{\fraku}}}}-1}\ \ 
\sum_{k=t 2^{m-w_{j_{\abs{\fraku}}}} +1}^{(t+1)2^{m-w_{j_{\abs{\fraku}}}}-1}\prod_{j\in\fraku} \log \left(\frac{1}{\sin^2 (\pi z_j k / 2^{m-w_j})}\right).\]
\end{theorem}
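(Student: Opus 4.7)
The plan is to convert $T_\bsgamma(N,\bsz,\bsw)$ into a discrete Fourier-type sum, then replace each truncated Fourier series by its closed-form $\log\sin^{-2}$ representative via \RefLem{lemma:expr_target_function}, and finally use \RefLem{lemma:diff_prod} to push that replacement through the product. First I would decompose $T_\bsgamma$ according to the support $\fraku$ of the summation index $\bsell$; since $M^*_{N,j,w_j}=\emptyset$ for $j>d^*$, only $\emptyset\neq\fraku\subseteq[d^*]$ contribute. Expanding $\delta_N(a) = \tfrac{1}{N}\sum_{k=0}^{N-1} \rme^{2\pi\icomp ak/N}$ and swapping sums yields
\[
T_\bsgamma(N,\bsz,\bsw) = \frac{1}{N}\sum_{\emptyset\neq\fraku\subseteq[d^*]}\gamma_\fraku\sum_{k=0}^{N-1}\prod_{j\in\fraku}U_{j,k},\qquad U_{j,k}:=\sum_{0<|\ell|<2^{m-w_j}}\frac{\rme^{2\pi\icomp \ell z_j k/2^{m-w_j}}}{|\ell|}.
\]

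Next I would classify each $k\in\{0,\ldots,N-1\}$ as \emph{good} when $x_{j,k}:=\{z_j k/2^{m-w_j}\}\neq 0$ for every $j\in\fraku$, and \emph{bad} otherwise. Because $(w_j)$ is non-decreasing and the $2^{m-w_j}$ are powers of $2$, goodness is equivalent to $k\not\equiv 0\pmod{2^{m-w_{j_{\abs{\fraku}}}}}$, leaving exactly $2^{w_{j_{\abs{\fraku}}}}$ bad values of $k$. For good $k$, \RefLem{lemma:expr_target_function} with truncation parameter $2^{m-w_j}$ gives
$U_{j,k}=L_{j,k}-\log 4-\rho_{j,k}$ with $L_{j,k}:=\log\sin^{-2}(\pi z_j k/2^{m-w_j})$ and $|\rho_{j,k}|\le 1/(2^{m-w_j}\|x_{j,k}\|)\le 1$. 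I would then apply \RefLem{lemma:diff_prod} with $u_j=U_{j,k}$, $v_j=L_{j,k}-\log 4$, $r_j=-\rho_{j,k}$ and $\bar u_j=2(1+\log N)$ (the trivial harmonic bound on $|U_{j,k}|$); one checks that $\bar u_j+|r_j|\le 6\log N$ under the hypothesis $m>3$. This produces, for every good $k$,
\[
\prod_{j\in\fraku}U_{j,k}=\prod_{j\in\fraku}(L_{j,k}-\log 4)+\theta_k(6\log N)^{\abs{\fraku}}\sum_{j\in\fraku}|\rho_{j,k}|,\qquad |\theta_k|\le 1.
\]
Summing over good $k$ and using the standard harmonic-sum estimate $\sum_{k:\,x_{j,k}\neq 0}1/(2^{m-w_j}\|x_{j,k}\|)\le 2^{w_j+1}(1+\log N)\le 2^{w_{j_{\abs{\fraku}}}+1}(1+\log N)$ (which follows from the bijectivity of $s\mapsto z_j s\bmod 2^{m-w_j}$ and the bound $\sum_{a=1}^{M-1}\min(a,M-a)^{-1}\le 2(1+\log M)$) gives a total \RefLem{lemma:diff_prod} error controlled by $2^{w_{j_{\abs{\fraku}}}+1}(6\log N)^{\abs{\fraku}}(1+\log N)$.

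The remaining task is to identify the leftover $\sum_{\text{good}\,k}\prod_{j\in\fraku}(L_{j,k}-\log 4)$ with $H_{N,\fraku,\bsw}-N(\log 4)^{\abs{\fraku}}$ up to absorbable corrections, and to handle the bad-$k$ contribution. Expanding binomially,
$\prod_{j\in\fraku}(L_{j,k}-\log 4)=\sum_{A\subseteq\fraku}(-\log 4)^{\abs{\fraku\setminus A}}\prod_{j\in A}L_{j,k}$: the $A=\fraku$ part summed over good $k$ is exactly $H_{N,\fraku,\bsw}$, while the $A=\emptyset$ part contributes $(-\log 4)^{\abs{\fraku}}(N-2^{w_{j_{\abs{\fraku}}}})$; after division by $N$ this yields the $-(\log 4)^{\abs{\fraku}}$ term of the statement plus an $O(2^{w_{j_{\abs{\fraku}}}}/N)$ remainder. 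For bad $k$, the trivial bound $|U_{j,k}|\le 2(1+\log N)$ gives $|\prod_{j\in\fraku}U_{j,k}|\le (6\log N)^{\abs{\fraku}}$, so the bad-$k$ contribution is at most $2^{w_{j_{\abs{\fraku}}}}(6\log N)^{\abs{\fraku}}$, again absorbable into the error budget. Collecting all contributions and summing over $\fraku$ produces the stated bound.

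The principal obstacle is the bookkeeping in the binomial expansion of $\prod(L_{j,k}-\log 4)$: the intermediate subsets $\emptyset\subsetneq A\subsetneq\fraku$ contribute cross terms $(-\log 4)^{\abs{\fraku\setminus A}}\sum_{\text{good}\,k}\prod_{j\in A}L_{j,k}$, each of which is individually of size $\mathcal{O}(N\log^{\abs{A}}N)$, far exceeding the desired error $\mathcal{O}(2^{w_{j_{\abs{\fraku}}}}\log^{\abs{\fraku}+1}N)$. They must therefore be regrouped so that the $\log 4$ powers telescope into the single net subtraction $-(\log 4)^{\abs{\fraku}}$ appearing in the statement; equivalently, one uses the Fourier-orthogonality identity $\sum_{k=0}^{N-1}U_{j,k}=0$ (and analogous identities for partial products) to ensure that averaging over $k$ annihilates all partial-$\log 4$ powers except the one arising from $A=\emptyset$. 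Once this cancellation is identified, the constants $(6\log N)^{\abs{\fraku}}(1+\log N)\cdot 2^{w_{j_{\abs{\fraku}}}+1}$ follow from the bounds assembled above.
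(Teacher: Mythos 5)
Your overall architecture (decomposition by support, Fourier expansion of $\delta_N$, the good/bad $k$ split at modulus $2^{m-w_{j_{\abs{\fraku}}}}$, Lemma~\ref{lemma:expr_target_function} plus Lemma~\ref{lemma:diff_prod}, and Korobov's harmonic-sum bound) coincides with the paper's, and those parts are sound. The genuine gap is exactly where you flag the ``principal obstacle'': your handling of the $\log 4$ terms does not work as described. First, the cancellation mechanism you invoke is not available: while $\sum_{k=0}^{N-1}U_{j,k}=0$ does hold for a single index, the ``analogous identities for partial products'' are false --- for $\abs{A}\ge 2$ one has $\sum_{k=0}^{N-1}\prod_{j\in A}U_{j,k}=N\sum_{\bsell_A}\delta_N(\bsell_A\cdot\bsz_A)/\prod_{j\in A}\abs{\ell_j}$, a nonnegative partial analogue of $T$ that is generally positive. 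Second, and more to the point, your binomial expansion is of $\prod_{j\in\fraku}(L_{j,k}-\log 4)$, i.e.\ in terms of $L_{j,k}=\log\sin^{-2}(\cdot)$, not of $U_{j,k}$; since $\sum_k L_{j,k}\approx N\log 4\neq 0$, the cross terms $\sum_{k}\prod_{j\in A}L_{j,k}$ for $\emptyset\subsetneq A\subsetneq\fraku$ are of size $N(\log 4)^{\abs{A}}$ plus an $H_{N,A,\bsw}$-type correlation depending on $\bsz_A$ --- comparable to the main term and not absorbable into the error budget. Finally, the $A=\emptyset$ term of your expansion carries the sign $(-\log 4)^{\abs{\fraku}}=(-1)^{\abs{\fraku}}(\log 4)^{\abs{\fraku}}$, which for even $\abs{\fraku}$ is positive and cannot produce the uniformly negative term $-\gamma_{\fraku}(\log 4)^{\abs{\fraku}}$ in the statement.

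The repair is to never perform that expansion. The paper keeps the $\ell=0$ contribution inside the product by setting $b(0)=1/\log 4$, so that the quantity to be analyzed is $\frac{1}{N}\sum_{k}\prod_{j\in\fraku}\bigl(\log 4+U_{j,k}\bigr)-(\log 4)^{\abs{\fraku}}$, the subtraction being the single $\bsell=\bszero$ term; passing from your $M^*$-sum to this $M$-sum only adds nonnegative terms, which is where the inequality ``$\le$'' enters. Then Lemma~\ref{lemma:diff_prod} is applied with $v_j=\log 4+U_{j,k}$ and $u_j=L_{j,k}$ (so $u_j=v_j+r_j$ directly by Lemma~\ref{lemma:expr_target_function}), replacing $\prod_j v_j$ by $\prod_j L_{j,k}=H$-summand plus the controlled remainder in one step, with no cross terms at all. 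If you insert this modification --- or equivalently observe that $\frac{1}{N}\sum_k\prod_jU_{j,k}\le\frac{1}{N}\sum_k\prod_j(\log 4+U_{j,k})-(\log 4)^{\abs{\fraku}}$ because the discarded terms are nonnegative --- the rest of your argument (the bad-$k$ bound $2^{w_{j_{\abs{\fraku}}}}(6\log N)^{\abs{\fraku}}$ and the remainder bound $2^{w_{j_{\abs{\fraku}}}+1}(6\log N)^{\abs{\fraku}}(1+\log N)$) goes through as you describe.
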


\begin{proof}
We have
\begin{eqnarray*}
 T_{\bsgamma}(N,\bsz,\bsw)&=& \sum_{\bszero\neq\bsell \in M_{N,d,\bsw}} \frac{\delta_N (\bsell\cdot\bsz)}{r_{1,\bsgamma} (\bsell)}\\
 &=&\sum_{\emptyset\neq\fraku \subseteq [d]} \gamma_{\fraku} \sum_{\bsell_{\fraku}\in M_{N,\abs{\fraku},\bsw_{\fraku}}^*}
 \frac{\delta_N (\bsell_{\fraku}\cdot\bsz_{\fraku})}{\prod_{j\in\fraku} \abs{\ell_j}}.
\end{eqnarray*} Note that $M_{N,\abs{\fraku},\bsw_{\fraku}}^*=\emptyset$ if 
$\fraku \cap \{d^* +1, d^* +2,\ldots,d\}\neq \emptyset$. 
Therefore, recalling the definition of $b(\ell)$ in~\eqref{eq:def-b-dnint},
\begin{eqnarray}\label{eqn:Testimate}\notag
T_{\bsgamma}(N,\bsz,\bsw)&=&
 \sum_{\emptyset\neq\fraku \subseteq [d^*]} \gamma_{\fraku} \sum_{\bsell_{\fraku}\in M_{N,\abs{\fraku},\bsw_{\fraku}}^*}
 \frac{\delta_N (\bsell_{\fraku}\cdot\bsz_{\fraku})}{\prod_{j\in\fraku} \abs{\ell_j}}\\\notag
 &=& \sum_{\emptyset\neq\fraku \subseteq [d^*]} \gamma_{\fraku} \sum_{\bsell_{\fraku}\in M_{N,\abs{\fraku},\bsw_{\fraku}}^*}
 \frac{\delta_N (\bsell_{\fraku}\cdot\bsz_{\fraku})}{\prod_{j\in\fraku} b(\ell_j)}\\\notag
 &\le & \sum_{\emptyset\neq\fraku \subseteq [d^*]} \gamma_{\fraku} \sum_{\bszero\neq\bsell_{\fraku}\in M_{N,\abs{\fraku},\bsw_{\fraku}}}
 \frac{\delta_N (\bsell_{\fraku}\cdot\bsz_{\fraku})}{\prod_{j\in\fraku} b(\ell_j)}\\\notag
 &=& \sum_{\emptyset\neq\fraku \subseteq [d^*]} \frac{\gamma_{\fraku}}{N} \sum_{k=0}^{N-1} 
 \left(\sum_{\bsell_{\fraku}\in M_{N,\abs{\fraku},\bsw_{\fraku}}}
 \frac{e^{2\pi\icomp k \bsell_{\fraku}\cdot \bsz_{\fraku}/N}}{\prod_{j\in\fraku} b(\ell_j)}-(\log 4)^{\abs{\fraku}}\right)\\\notag
 &=& \sum_{\emptyset\neq\fraku \subseteq [d^*]} \frac{\gamma_{\fraku}}{N} \sum_{k=0}^{N-1} 
 \sum_{\bsell_{\fraku}\in M_{N,\abs{\fraku},\bsw_{\fraku}}}
 \frac{e^{2\pi\icomp k \bsell_{\fraku}\cdot \bsz_{\fraku}/N}}{\prod_{j\in\fraku} b(\ell_j)}
 -\sum_{\emptyset\neq\fraku \subseteq [d^*]}\gamma_\setu(\log 4)^{\abs{\fraku}}\\\notag
 &=& \sum_{\emptyset\neq\fraku \subseteq [d^*]} \frac{\gamma_{\fraku}}{N} \sum_{k=0}^{N-1}  
 \prod_{j\in\fraku} \left(\log 4 + \sum_{\ell_j \in M_{N,j,w_{j}}^*} \frac{e^{2\pi\icomp k \ell_j 2^{w_j} z_j / 2^m}}{\abs{\ell_j}}\right)
 -\sum_{\emptyset\neq\fraku \subseteq [d^*]}\gamma_\setu(\log 4)^{\abs{\fraku}}\\ \notag
&=& \sum_{\emptyset\neq\fraku \subseteq [d^*]} \frac{\gamma_{\fraku}}{N} \sum_{k=0}^{N-1}  
\prod_{j\in\fraku} \left(\log 4 + \sum_{\ell_j \in M_{N,j,w_{j}}^*} \frac{e^{2\pi\icomp k \ell_j z_j / 2^{m-w_j}}}{\abs{\ell_j}}\right)
 -\sum_{\emptyset\neq\fraku \subseteq [d^*]}\gamma_\setu (\log 4)^{\abs{\fraku}}.\\ 
\end{eqnarray}
We can analyze the first term in \eqref{eqn:Testimate} as follows. We assume that
\[
 \fraku =\{j_1,j_2,\ldots,j_{\abs{\fraku}}\},
\]
such that $1\le j_1 < j_2 < \cdots < j_{\abs{\fraku}}$. This implies
\[
 m\ge m - w_{j_1} \ge m - w_{j_2} \ge \cdots \ge m - w_{j_{\abs{\fraku}}}.
\]
Suppose now that $k\in\{0,1,\ldots,N-1\}$ is such that 
$k\not\equiv 0 \text{ (mod ${2^{m-w_{j_{\abs{\fraku}}}}}$)}.$ This also implies 
\[
 k\not\equiv 0 \text{ (mod $2^{m-w_{j_i}}$)}\quad \forall i\in\{1,\ldots,\abs{\fraku}\}.
\]
Therefore, for fixed $\emptyset\neq\fraku \subseteq [d^*]$, we can rewrite the inner sum over $k$ of the first term in \eqref{eqn:Testimate} as
\begin{eqnarray}\label{eqn:qualitT}\notag
\lefteqn{\sum_{k=0}^{N-1}  \prod_{j\in\fraku} \left(\log 4 + \sum_{\ell_j \in M_{N,j,w_{j}}^*}  
\frac{e^{2\pi\icomp k \ell_j z_j / 2^{m-w_j}}}{\abs{\ell_j}}\right)} \\
\notag &=&  \sum_{t=0}^{2^{w_{j_{\abs{\fraku}}}}-1}
\ \ \sum_{k=t 2^{m-w_{j_{\abs{\fraku}}}} +1}^{(t+1)2^{m-w_{j_{\abs{\fraku}}}}-1}
 \left(\prod_{j\in\fraku} \left(\log 4 + \sum_{\ell_j \in M_{N,j,w_{j}}^*} 
 \frac{e^{2\pi\icomp k \ell_j z_j / 2^{m-w_j}}}{\abs{\ell_j}}\right)\right)\\
\notag  && + \sum_{t=0}^{2^{w_{j_{\abs{\fraku}}}}-1}
 \left(\prod_{j\in\fraku} \left(\log 4 + \sum_{\ell_j \in M_{N,j,w_{j}}^*} 
 \frac{e^{2\pi\icomp t 2^{m-w_{j_{\abs{\fraku}}}}  \ell_j z_j / 2^{m-w_j}}}{\abs{\ell_j}}\right)\right)\\
 &=:& \Sigma_{\fraku}^{(1)} + \Sigma_{\fraku}^{(2)}. 
\end{eqnarray} 
Now we estimate the second summand $ \Sigma_{\fraku}^{(2)}$ in \eqref{eqn:qualitT}  as follows,
\begin{eqnarray*}
 \Sigma_{\fraku}^{(2)} \le  \abs{\Sigma_{\fraku}^{(2)}}&\le & \sum_{t=0}^{2^{w_{j_{\abs{\fraku}}}}-1}
 \prod_{j\in\fraku} \left(\log 4 + \sum_{\ell_j \in M_{N,j,w_{j}}^*} \frac{1}{\abs{\ell_j}}\right)\\
 &=& \sum_{t=0}^{2^{w_{j_{\abs{\fraku}}}}-1}
 \prod_{j\in\fraku} \left(\log 4 + 2\sum_{\ell =1}^{2^{m-w_j}-1} \frac{1}{\ell}\right)\\
 &\le& \sum_{t=0}^{2^{w_{j_{\abs{\fraku}}}}-1}
 \prod_{j\in\fraku} \left(\log 4 + 2\sum_{\ell =1}^{N-1} \frac{1}{\ell}\right)\\
 &\le& \sum_{t=0}^{2^{w_{j_{\abs{\fraku}}}}-1}
 \prod_{j\in\fraku} \left(\log 4 + 4\log N\right)\\
  &\le& \sum_{t=0}^{2^{w_{j_{\abs{\fraku}}}}-1} (6\log N)^{\abs{\fraku}}\\
  &=& 2^{w_{j_{\abs{\fraku}}}} (6\log N)^{\abs{\fraku}},
\end{eqnarray*}
where we implicitly assumed that $N\ge 3$ (which is no significant restriction). 
Let us now analyze $ \Sigma_{\fraku}^{(1)} $ in \eqref{eqn:qualitT}, where we proceed similarly to \cite{EKNO21}. For $j\in \fraku$ 
we get
\begin{eqnarray}\label{used_product_lemma}
 \Sigma_{\fraku}^{(1)} &=&  \notag
 \sum_{t=0}^{2^{w_{j_{\abs{\fraku}}}}-1}\ \ \sum_{k=t 2^{m-w_{j_{\abs{\fraku}}}} +1}^{(t+1)2^{m-w_{j_{\abs{\fraku}}}}-1}
 \left(\prod_{j\in\fraku} v_j (k) - \prod_{j\in\fraku} u_j (k) + \prod_{j\in\fraku} u_j (k)\right)\\\notag
 &=& \sum_{t=0}^{2^{w_{j_{\abs{\fraku}}}}-1}\ \ \sum_{k=t 2^{m-w_{j_{\abs{\fraku}}}} +1}^{(t+1)2^{m-w_{j_{\abs{\fraku}}}}-1}
 \prod_{j\in\fraku} u_j (k)\\\notag
 &&+\sum_{t=0}^{2^{w_{j_{\abs{\fraku}}}}-1}\ \ \sum_{k=t 2^{m-w_{j_{\abs{\fraku}}}} +1}^{(t+1)2^{m-w_{j_{\abs{\fraku}}}}-1}
 \left(\prod_{j\in\fraku} v_j (k) - \prod_{j\in\fraku} u_j (k) \right)\\\notag
 &=& \sum_{t=0}^{2^{w_{j_{\abs{\fraku}}}}-1}\ \ \sum_{k=t 2^{m-w_{j_{\abs{\fraku}}}} +1}^{(t+1)2^{m-w_{j_{\abs{\fraku}}}}-1}
 \prod_{j\in\fraku} u_j (k)\\\label{eq:T-bound-ln-sin}
 &&+\sum_{t=0}^{2^{w_{j_{\abs{\fraku}}}}-1}\ \ \sum_{k=t 2^{m-w_{j_{\abs{\fraku}}}} +1}^{(t+1)2^{m-w_{j_{\abs{\fraku}}}}-1}
(-\theta_{\fraku} (k)) \left(\prod_{j\in\fraku} (\overline{u}_j (k) + \abs{r_j (k)})\right) \sum_{j\in\fraku} \abs{r_j (k)},
\end{eqnarray}
where in \eqref{used_product_lemma} we used Lemma \ref{lemma:diff_prod} with
\begin{align*}
&u_j=u_j (k):=\log \left(\frac{1}{\sin^2 (\pi z_j k / 2^{m-w_j})}\right),\quad\qquad\qquad \overline{u}_j = \overline{u}_j (k):= 2\log N,\\
&v_j=v_j (k):=\log 4 +\sum_{\ell_j \in M_{N,j,w_{j}}^*} \frac{e^{2\pi\icomp k \ell_j z_j / 2^{m-w_j}}}{\abs{\ell_j}}, \quad
r_j=r_j (k):= \frac{\tau_j (k)}{2^{m-w_j} \norm{z_j k / 2^{m-w_j}}},
\end{align*}
where the terms $\theta_{\fraku} (k)$ are defined analogously to Lemma \ref{lemma:diff_prod}, 
and satisfy $\abs{\theta_{\fraku} (k)}\le 1$ and the $\tau_j (k)$ are analogous to Lemma \ref{lemma:expr_target_function}, and
also satisfy $\abs{\tau_j (k)}\le 1$. It can be checked easily that the conditions in the lemmas are fulfilled. 
Indeed, Condition (a) in Lemma \ref{lemma:diff_prod} is satisfied due to Lemma \ref{lemma:expr_target_function}. 
Furthermore, as in \cite[Proof of Theorem 2]{EKNO21}, we see that 
\[
 \sin^2 \left(\pi \frac{z_j k}{2^{m-w_j}}\right) \ge \left(\frac{1}{2^{m-w_j}}\right)^2.
\]
Therefore,
\[
 \abs{u_j(k)}=\log \left(\frac{1}{\sin^2 (\pi z_j k / 2^{m-w_j})}\right) \le \log \left(\left(2^{m-w_j}\right)^2\right)
 \le 2\log N =\overline{u}_j (k). 
\]
Furthermore, $\overline{u}_j (k)\ge 1$, as long as $N\ge 2$, which again is not a real restriction.
Next we show how to bound the second summand in \eqref{eq:T-bound-ln-sin} independently of the choice of $\bsz$ as follows,
\begin{eqnarray*}
& & 
 \sum_{t=0}^{2^{w_{j_{\abs{\fraku}}}}-1}\ \ \sum_{k=t 2^{m-w_{j_{\abs{\fraku}}}} +1}^{(t+1)2^{m-w_{j_{\abs{\fraku}}}}-1}
(-\theta_{\fraku} (k)) \left(\prod_{j\in\fraku} (\overline{u}_j (k) + \abs{r_j (k)})\right) \sum_{j\in\fraku} \abs{r_j (k)}\\
&\le & \sum_{t=0}^{2^{w_{j_{\abs{\fraku}}}}-1}\ \ \sum_{k=t 2^{m-w_{j_{\abs{\fraku}}}} +1}^{(t+1)2^{m-w_{j_{\abs{\fraku}}}}-1}
\abs{\theta_{\fraku} (k)}
\left(\prod_{j\in\fraku} \left(2\log N + \frac{\abs{\tau_j (k)}}{2^{m-w_j} \norm{z_j k / 2^{m-w_j}}} \right)\right)\\
&&\times \sum_{j\in\fraku} \frac{\abs{\tau_j (k)}}{2^{m-w_j} \norm{z_j k / 2^{m-w_j}}}\\
&\le & \sum_{t=0}^{2^{w_{j_{\abs{\fraku}}}}-1}\ \ \sum_{k=t 2^{m-w_{j_{\abs{\fraku}}}} +1}^{(t+1)2^{m-w_{j_{\abs{\fraku}}}}-1}
\abs{\theta_{\fraku} (k)}
\left(\prod_{j\in\fraku} (1+ 2\log N )\right)
\sum_{j\in\fraku} \frac{\abs{\tau_j (k)}}{2^{m-w_j} \norm{z_j k / 2^{m-w_j}}}\\
&=& \left(\prod_{j\in\fraku} (1+ 2\log N )\right)\sum_{j\in\fraku} \sum_{t=0}^{2^{w_{j_{\abs{\fraku}}}}-1}
\ \ \sum_{k=t 2^{m-w_{j_{\abs{\fraku}}}} +1}^{(t+1)2^{m-w_{j_{\abs{\fraku}}}}-1}
\frac{\abs{\theta_{\fraku} (k)}\abs{\tau_j (k)}}{2^{m-w_j} \norm{z_j k / 2^{m-w_j}}}\\
&\leq& (1+2\log N)^{\abs{\fraku}} \sum_{j\in\fraku}
\sum_{t=0}^{2^{w_{j_{\abs{\fraku}}}}-1}\ \ \sum_{k=t 2^{m-w_{j_{\abs{\fraku}}}} +1}^{(t+1)2^{m-w_{j_{\abs{\fraku}}}}-1}
\frac{1}{2^{m-w_j} \norm{z_j k / 2^{m-w_j}}}\\
&\le& (1+2\log N)^{\abs{\fraku}} \sum_{j\in\fraku}
\sum_{t=0}^{2^{w_{j_{\abs{\fraku}}}}-1}\ \ \sum_{k=1}^{2^{m-w_j}-1}
\frac{1}{2^{m-w_j} \norm{z_j k / 2^{m-w_j}}}\\ &\le& (1+2\log N)^{\abs{\fraku}} 2\abs{\fraku} 
 2^{w_{j_{\abs{\fraku}}}} (1+\log N) \\
 &\le& (1+2\log N)^{\abs{\fraku}} 2^{\abs{\fraku}} 
 2^{w_{j_{\abs{\fraku}}}} (1+\log N) \\
 &\le& (6\log N)^{\abs{\fraku}} 
 2^{w_{j_{\abs{\fraku}}}} (1+\log N),
\end{eqnarray*}
where we used that
\[2^{m-w_j} \norm{\frac{z_j k}{2^{m-w_j}}}\ge2^{m-w_j}\norm{\frac{1}{2^{m-w_j}}} =1,
 \]
that $2\abs{\fraku} \le 2^{\abs{\fraku}}$, and that $2^{m-w_j}\ge 2^{m-w_{j_{\abs{\fraku}}}}$ for $j\in \fraku$, and hence, if $k$ runs through 
the integers $\{t 2^{m-w_{j_{\abs{\fraku}}}} +1,\ldots, (t+1)2^{m-w_{j_{\abs{\fraku}}}}-1\}$, the values of $ \norm{z_j k / 2^{m-w_j}}$ 
are a subset of the values of $ \norm{z_j k / 2^{m-w_j}}$ when $k$ runs through the integers $\{1,\ldots,2^{m-w_j}-1\}$. Then, due to 
\cite[Corollary of Proposition 4]{Kor63}, we know that 
\[
 \sum_{k=1}^{2^{m-w_j}-1} \frac{1}{2^{m-w_j} \norm{z_j k / 2^{m-w_j}}}
 \le 2 (1+\log 2^{m-w_j}) \le 2 (1+\log N).
\]
Combining the estimates for $ \Sigma_{\fraku}^{(1)} $ and $\Sigma_{\fraku}^{(2)}$, this yields 
\begin{eqnarray*}
\Sigma_{\fraku}^{(1)} + \Sigma_{\fraku}^{(2)}&=& 2^{w_{j_{\abs{\fraku}}}}  (6\log N)^{\abs{\fraku}}  + 
 H_{N,\fraku,\bsw} + 
 (6\log N)^{\abs{\fraku}}  2^{w_{j_{\abs{\fraku}}}} (1+\log N)\\
 &\le & 2^{w_{j_{\abs{\fraku}}}+1}  (6\log N)^{\abs{\fraku}} (1+\log N) +  
 H_{N,\fraku,\bsw}, 
\end{eqnarray*}
where 
\[
H_{N,\setu,\bsw} = \sum_{t=0}^{2^{w_{j_{\abs{\fraku}}}}-1}\ \ \sum_{k=t 2^{m-w_{j_{\abs{\fraku}}}} +1}^{(t+1)2^{m-w_{j_{\abs{\fraku}}}}-1}
 \prod_{j\in\fraku} u_j (k).
 \]
Hence, 
\begin{eqnarray*}
 T_{\bsgamma}(N,\bsg,\bsw) &\le & \sum_{\emptyset\neq \fraku\subseteq [d^*] } \frac{\gamma_{\fraku}}{N}
 2^{w_{j_{\abs{\fraku}}}+1}  (6\log N)^{\abs{\fraku}} (1+\log N)\\
 &&+ \sum_{\emptyset\neq \fraku\subseteq [d^*] } \frac{\gamma_{\fraku}}{N} H_{N,\fraku,\bsw}
 - \sum_{\emptyset\neq \fraku\subseteq [d^*] } \gamma_{\fraku} (\log 4)^{\abs{\fraku}}.
\end{eqnarray*}
\end{proof}

In the following, we write 
\[
  H_{s,N,\bsgamma,\bsw}:= 
  \sum_{\emptyset\neq \fraku\subseteq [s] } \gamma_{\fraku} H_{N,\fraku,\bsw}.
\]

The next lemma motivates the choice of our quality function for the algorithm. In particular, assuming we have already fixed 
the first (i.e., the least significant) $v-1$ bits of $z_s$ for $s\in [d^*]$, we would like to find out how good a specific choice for the $v$th bit is, 
in terms of $H_{s,N,\bsgamma,\bsw}$. To this end, we consider the average over all remaining $m-w_s-v$ bits.

For the base 2-digit representation of a $z_s$ with $1\le s\le d$, we write 
\[
 z_s=z_s^{(0)} + z_s^{(1)}2 + z_s^{(2)} 2^2 + \cdots .
\]

\begin{lemma}\label{avg_sum}
For an integer $v\in \{1,\dots,m-w_s-1\}$, with $m \in \N$ and $s\in [d^*]$, let $z\in\{0,1\}$ and $\bsz =(z_1,\ldots,z_s) \in \Z^s$, 
and where the first $v-1$ bits of $z_s$ have been selected. 
We write $z_{s,v-1}:=z_{s}^{(0)} + z_{s}^{(1)} 2 + \cdots + z_{s}^{(v-2)} 2^{v-2}$, 
and $\widetilde{z}_s:=z_{s}^{(0)} + z_{s}^{(1)}  2 + \cdots + z_{s}^{(v-2)} 2^{v-2} + z 2^{v-1}$. 
Then the average of $H_{s,N,\bsgamma,\bsw}$ over the next $m-w_s-v$ 
bit choices for $z_s$ is given by
\begin{align}\notag
    &\frac{1}{2^{m-w_s-v}} \sum_{\overline{z}\in \ZZ_{2^{m-w_s-v}}} H_{s,N,\bsgamma,\bsw} (z_1,\ldots,z_{s-1}, \widetilde{z}_s
    + \overline{z}2^v)\\\notag &=\sum_{t=v}^{m-w_s} \frac{1}{2^{t-v}} 
    \sum_{\ell=0}^{2^{w_s}-1} \ \ \sum_{\substack{k=\ell 2^t + 1\\ k\equiv 1 \tmod{2}}}^{( \ell +1) 2^t-1}
    \sum_{\substack{\emptyset\neq \fraku\subseteq [s]\\ s\in\fraku }} \gamma_{\fraku}\log 
    \left(\frac{1}{\sin^2 (\pi k \widetilde{z}_s  / 2^v)}\right)\\ \notag
    &\times\prod_{\substack{j\in\fraku\\ j\neq s}} 
     \log \left(\frac{1}{\sin^2 \left(\pi z_j \frac{k}{2^t}\,\frac{2^{m-w_s}}{2^{m-w_j}}\right)}\right)\\
     \label{eqn:hfunc}&+ S_{N,v,\bsgamma,\bsw}(\bsz),
 \end{align} 
 where the term $S_{N,v,\bsgamma,\bsw}(\bsz)$, which is independent of $z$ and $\bar{z}$, is given by
\begin{align*}
S_{N,v,\bsgamma,\bsw}(\bsz) &= \sum_{r=1}^{s-1} \sum_{\substack{k=0\\ k\not\equiv 0 \tmod{2^{m-w_{r}}}}}^{N-1}
 \sum_{\substack{\emptyset\neq \fraku\subseteq [r]\\ r\in\fraku }} \gamma_{\fraku}
 \prod_{j\in\fraku} \log \left(\frac{1}{\sin^2 (\pi z_j k / 2^{m-w_j})}\right)\\
 &+\sum_{ \ell=0}^{2^{w_s}-1} \ \ \sum_{t=1}^{v-1} \sum_{\substack{k= \ell 2^t + 1\\ k\equiv 1 \tmod{2}}}^{( \ell +1) 2^t-1}
 \sum_{\substack{\emptyset\neq \fraku\subseteq [s]\\ s\in\fraku }} \gamma_{\fraku}
  \left(\prod_{\substack{j\in\fraku\\ j\neq s}} 
 \log \left(\frac{1}{\sin^2 \left(\pi z_j \frac{k}{2^t}\,\frac{2^{m-w_s}}{2^{m-w_j}}\right)}\right)\right)\\
 &\times 
 \log\left(\frac{1}{\sin^2 (\pi z_{s,v-1} k / 2^t)}\right)\\ 
 &+ \sum_{\ell=0}^{2^{w_s}-1} \ \ \sum_{t=v}^{m-w_s}\frac{2^{t-v}-1}{2^{t-v}} (\log 4 ) 
 \sum_{\substack{k= \ell 2^t + 1\\ k\equiv 1 \tmod{2}}}^{( \ell +1) 2^t-1}
 \sum_{\substack{\emptyset\neq \fraku\subseteq [s]\\ s\in\fraku }} 
 \gamma_{\fraku}\prod_{\substack{j\in\fraku\\ j\neq s}} 
 \log \left(\frac{1}{\sin^2 \left(\pi z_j \frac{k}{2^t}\,\frac{2^{m-w_s}}{2^{m-w_j}}\right)}\right).
\end{align*}
\end{lemma}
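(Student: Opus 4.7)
The plan is to decompose $H_{s,N,\bsgamma,\bsw}$ according to the maximum index of $\fraku$. Split
\[
H_{s,N,\bsgamma,\bsw} = \sum_{r=1}^{s-1}\sum_{\substack{\fraku\subseteq [r]\\ r\in\fraku}}\gamma_\fraku H_{N,\fraku,\bsw} + \sum_{\substack{\fraku\subseteq [s]\\ s\in\fraku}}\gamma_\fraku H_{N,\fraku,\bsw}.
\]
The first sum depends only on $z_1,\ldots,z_{s-1}$, so it is unchanged by the average over $\bar z$; after rewriting its inner $k$-sum as $k\in\{0,\ldots,N-1\}$ with $k\not\equiv 0\tpmod{2^{m-w_r}}$, it reproduces the first line of $S_{N,v,\bsgamma,\bsw}(\bsz)$ verbatim. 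Hence the real work is averaging the second sum.

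For subsets with $s=\max\fraku$, I would first reparameterize the inner variable by its $2$-adic valuation: every $k$ with $k\not\equiv 0\tpmod{2^{m-w_s}}$ can be written uniquely as $k=2^{m-w_s-t}k'$ with $t\in\{1,\ldots,m-w_s\}$ and $k'$ odd, and matching the range $k\in\{\ell 2^{m-w_s}+1,\ldots,(\ell+1)2^{m-w_s}-1\}$ yields $k'\in\{\ell 2^t+1,\ldots,(\ell+1)2^t-1\}$ odd, exactly the range appearing in the conclusion. Under this substitution the argument $\pi z_j k/2^{m-w_j}$ for $j\in\fraku\setminus\{s\}$ becomes $\pi z_j(k'/2^t)(2^{m-w_s}/2^{m-w_j})$, and the $j=s$ argument becomes $\pi z_s k'/2^t$. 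Then I would split the $t$-sum into the three ranges $t\le v-1$, $t=v$, and $t\ge v+1$.

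The first two ranges are easy: since $t\le v$, the quantity $z_sk'/2^t$ depends only on $z_s\bmod 2^t$, hence only on $z_{s,v-1}$ (when $t\le v-1$) or on $\widetilde z_s$ (when $t=v$), so averaging over $\bar z$ is trivial. These contributions produce the second line of $S_{N,v,\bsgamma,\bsw}$ and the $t=v$ summand of the first display of the conclusion (where $1/2^{t-v}=1$). The only interesting case is $t\ge v+1$: writing $z_s=\widetilde z_s+\bar z\,2^v$ and using that $\bar z\bmod 2^{t-v}$ attains each residue in $\ZZ_{2^{t-v}}$ exactly $2^{m-w_s-t}$ times as $\bar z$ ranges over $\ZZ_{2^{m-w_s-v}}$, the average collapses to
\[
\frac{1}{2^{t-v}}\sum_{b=0}^{2^{t-v}-1}\log\!\left(\frac{1}{\sin^2\!\bigl(\pi\widetilde z_s k'/2^t+\pi bk'/2^{t-v}\bigr)}\right);
\]
since $k'$ is odd, the map $b\mapsto bk'\bmod 2^{t-v}$ is a bijection on $\ZZ_{2^{t-v}}$, so this sum is invariant under relabelling $b':=bk'\bmod 2^{t-v}$.

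The main obstacle is the last step, which requires the classical Chebyshev-type identity $\prod_{b'=0}^{n-1}\sin(\pi x+\pi b'/n)=\sin(n\pi x)/2^{n-1}$. Applying it with $n=2^{t-v}$ and $x=\widetilde z_s k'/2^t$ (so that $nx=\widetilde z_sk'/2^v$) turns the inner sum into $\log(\sin^{-2}(\pi\widetilde z_s k'/2^v))+(2^{t-v}-1)\log 4$; dividing by $2^{t-v}$ recovers, for each $t\ge v+1$, the corresponding summand of the first display of the conclusion together with the $\frac{2^{t-v}-1}{2^{t-v}}\log 4$ contribution that forms the third line of $S_{N,v,\bsgamma,\bsw}$. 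Combining the three cases for $t$ yields the claimed identity; beyond the product-to-sum identity, the only delicate point is keeping the bookkeeping of the reparameterized summation ranges straight.
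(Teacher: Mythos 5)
Your proposal is correct and follows essentially the same route as the paper's proof: decompose $H_{s,N,\bsgamma,\bsw}$ by the maximal index $r$ of $\fraku$, observe that the terms with $r\le s-1$ are independent of $\bar z$, reparameterize the $k$-sum by its $2$-adic valuation via the identity $\sum_{k=1}^{2^p-1}f(k/2^p)=\sum_{t=1}^{p}\sum_{k \text{ odd}}f(k/2^t)$, split according to $t<v$ versus $t\ge v$, and collapse the average over $\bar z\in\ZZ_{2^{m-w_s-v}}$ to one over $\ZZ_{2^{t-v}}$ with multiplicity $2^{m-w_s-t}$. The only difference is that you carry out the final averaging step explicitly via the bijection $b\mapsto bk'\bmod 2^{t-v}$ and the sine-product identity, whereas the paper delegates exactly this computation to Lemma 6 of \cite{EKNO21}.
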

\begin{proof}
For arbitrary $\bsz=(z_1,\ldots,z_s)$, we can rewrite the quantity $H_{s,N,\bsgamma,\bsw}$ as follows,
\begin{eqnarray}\notag
 H_{s,N,\bsgamma,\bsw}&=&H_{s,N,\bsgamma,\bsw}(\bsz)\\\notag
 &:=& N \sum_{\emptyset\neq \fraku\subseteq [s] } \frac{\gamma_{\fraku}}{N} H_{N,\fraku,\bsw}\\\notag
 &=&\sum_{\emptyset\neq \fraku\subseteq [s] } \gamma_{\fraku} 
 \sum_{t=0}^{2^{w_{j_{\abs{\fraku}}}}-1}\ \ \sum_{k=t 2^{m-w_{j_{\abs{\fraku}}}} +1}^{(t+1)2^{m-w_{j_{\abs{\fraku}}}}-1}
 \prod_{j\in\fraku} u_j (k)\\\notag
 &=&\sum_{\emptyset\neq \fraku\subseteq [s] } \gamma_{\fraku} 
 \sum_{t=0}^{2^{w_{j_{\abs{\fraku}}}}-1}\ \ \sum_{k=t 2^{m-w_{j_{\abs{\fraku}}}} +1}^{(t+1)2^{m-w_{j_{\abs{\fraku}}}}-1}
 \prod_{j\in\fraku} \log \left(\frac{1}{\sin^2 (\pi z_j k / 2^{m-w_j})}\right)\\ \notag
 &=&\sum_{\emptyset\neq \fraku\subseteq [s] } \gamma_{\fraku} \sum_{\substack{k=0\\ k\not\equiv 0 \tmod{2^{m-w_{j_{\abs{\fraku}}}}} }}^{N-1}
 \prod_{j\in\fraku} \log \left(\frac{1}{\sin^2 (\pi z_j k / 2^{m-w_j})}\right)\\\label{H_qty}
 &=&\sum_{r=1}^{s} \sum_{\substack{k=0\\ k\not\equiv 0 \text{ (mod $2^{m-w_{r}}$)}}}^{N-1}
 \sum_{\substack{\emptyset\neq \fraku\subseteq [r]\\ r\in\fraku }} \gamma_{\fraku}
 \prod_{j\in\fraku} \log \left(\frac{1}{\sin^2 (\pi z_j k / 2^{m-w_j})}\right).
\end{eqnarray}
Hence, we would like to estimate
\begin{align}\label{eq_avg_sum}
    \frac{1}{2^{m-w_s-v}} \sum_{\overline{z}\in \ZZ_{2^{m-w_s-v}}} 
\sum_{r=1}^s \sum_{\substack{k=0\\ k\not\equiv 0 \tmod{2^{m-w_{r}}} }}^{N-1}
 \sum_{\substack{\emptyset\neq \fraku\subseteq [r]\\ r\in\fraku }} \gamma_{\fraku}
 \prod_{j\in\fraku} \log \left(\frac{1}{\sin^2 (\pi z_j k / 2^{m-w_j})}\right),
\end{align}
where we now assume that $z_s=z_{s,v-1} + z 2^{v-1} + \overline{z} 2^v=\widetilde{z}_s + \overline{z} 2^v$.
Observe that all terms in the sum over $r\in\{1,2,\ldots,s\}$ for which $1\le r \le s-1$ are independent of 
$\overline{z}$, and from this we obtain the first sum in the definition of $S_{m,v,\bsgamma,\bsw}$.
Therefore we need to analyze the remaining part of \eqref{eq_avg_sum}, which equals
\begin{align*}
     \frac{1}{2^{m-w_s-v}} \sum_{\overline{z}\in \ZZ_{2^{m-w_s-v}}} 
\sum_{\substack{k=0\\ k\not\equiv 0 \tmod{2^{m-w_s}}}}^{N-1}
 \sum_{\substack{\emptyset\neq \fraku\subseteq [s]\\ s\in\fraku }} \gamma_{\fraku}
 \prod_{j\in\fraku} \log \left(\frac{1}{\sin^2 (\pi z_j k / 2^{m-w_j})}\right).
\end{align*}
Now, we are going to use the general identity
\begin{equation}\label{eq:nice_identity}
 \sum_{k=1}^{2^p-1} f(k/2^p)=\sum_{t=1}^p \sum_{\substack{k=1\\ k\equiv 1 \tmod{2}}}^{2^{t}-1} f(k/2^{t}).
\end{equation}
Then, we can write
\begin{eqnarray*}
 \lefteqn{\frac{1}{2^{m-w_s-v}} \sum_{\overline{z}\in \ZZ_{2^{m-w_s-v}}} 
\sum_{\substack{k=0\\ k\not\equiv 0 \tmod{2^{m-w_s}}}}^{N-1}
 \sum_{\substack{\emptyset\neq \fraku\subseteq [s]\\ s\in\fraku }} \gamma_{\fraku}
 \prod_{j\in\fraku} \log \left(\frac{1}{\sin^2 (\pi z_j k / 2^{m-w_j})}\right)}\\
&=& 
\frac{1}{2^{m-w_s-v}} \sum_{\overline{z}\in \ZZ_{2^{m-w_s-v}}} 
\sum_{\ell=0}^{2^{w_s}-1} \ \ \sum_{k=\ell\, 2^{m-w_s}+1}^{(\ell+1)2^{m-w_s}-1}
 \sum_{\substack{\emptyset\neq \fraku\subseteq [s]\\ s\in\fraku }} \gamma_{\fraku}
 \prod_{j\in\fraku} \log \left(\frac{1}{\sin^2 (\pi z_j k / 2^{m-w_j})}\right)\\
&=& 
\frac{1}{2^{m-w_s-v}} \sum_{\overline{z}\in \ZZ_{2^{m-w_s-v}}} 
\sum_{\ell=0}^{2^{w_s}-1} \ \ \sum_{k=1}^{2^{m-w_s}-1}
 \sum_{\substack{\emptyset\neq \fraku\subseteq [s]\\ s\in\fraku }} \gamma_{\fraku}
 \prod_{j\in\fraku} \log \left(\frac{1}{\sin^2 (\pi z_j (k+\ell\, 2^{m-w_s}) / 2^{m-w_j})}\right)\\
&=& 
\frac{1}{2^{m-w_s-v}} \sum_{\overline{z}\in \ZZ_{2^{m-w_s-v}}} 
\sum_{\ell=0}^{2^{w_s}-1} \ \ \sum_{k=1}^{2^{m-w_s}-1}
 \sum_{\substack{\emptyset\neq \fraku\subseteq [s]\\ s\in\fraku }} \gamma_{\fraku}\\
&&\times \prod_{j\in\fraku} \log \left(\frac{1}{\sin^2 (\pi z_j \ell\, 2^{w_j-w_s} + \pi z_j k / 2^{m-w_j})}\right)\\
&=& 
\frac{1}{2^{m-w_s-v}} \sum_{\overline{z}\in \ZZ_{2^{m-w_s-v}}} 
\sum_{\ell=0}^{2^{w_s}-1} \ \ \sum_{k=1}^{2^{m-w_s}-1}
\sum_{\substack{\emptyset\neq \fraku\subseteq [s]\\ s\in\fraku }} \gamma_{\fraku}\\
&&\times \prod_{j\in\fraku} \log \left(\frac{1}{\sin^2 (\pi z_j \ell\, 2^{w_j-w_s} + (\pi z_j k / 2^{m-w_s})2^{w_j-w_s})}\right)\\
&=& 
\frac{1}{2^{m-w_s-v}} \sum_{\overline{z}\in \ZZ_{2^{m-w_s-v}}} 
\sum_{\ell=0}^{2^{w_s}-1} \ \ \sum_{t=1}^{m-w_s} \sum_{\substack{k=1\\ k\equiv 1 \tmod{2}}}^{2^t-1}
\sum_{\substack{\emptyset\neq \fraku\subseteq [s]\\ s\in\fraku }} \gamma_{\fraku}\\
&&\times \prod_{j\in\fraku} \log \left(\frac{1}{\sin^2 (\pi z_j \ell\, 2^{w_j-w_s} + (\pi z_j k / 2^t) 2^{w_j-w_s})}\right)\\
&=& 
\frac{1}{2^{m-w_s-v}} \sum_{\overline{z}\in \ZZ_{2^{m-w_s-v}}} 
\sum_{\ell=0}^{2^{w_s}-1} \ \ \sum_{t=1}^{m-w_s} \sum_{\substack{k=1\\ k\equiv 1 \tmod{2}}}^{2^t-1}
\sum_{\substack{\emptyset\neq \fraku\subseteq [s]\\ s\in\fraku }} \gamma_{\fraku}\\
&&\times \prod_{j\in\fraku} \log\left(\frac{1}{\sin^2 \left(\pi z_j \frac{\ell 2^t +k}{2^t}\,\frac{2^{m-w_s}}{2^{m-w_j}}\right)}\right)\\
&=& 
\frac{1}{2^{m-w_s-v}} \sum_{\overline{z}\in \ZZ_{2^{m-w_s-v}}} 
\sum_{\ell=0}^{2^{w_s}-1} \ \ \sum_{t=1}^{m-w_s} \sum_{\substack{k=\ell\, 2^t + 1\\ k\equiv 1 \tmod{2}}}^{(\ell+1) 2^t-1}
\sum_{\substack{\emptyset\neq \fraku\subseteq [s]\\ s\in\fraku }} \gamma_{\fraku}
\prod_{j\in\fraku} \log \left(\frac{1}{\sin^2 \left(\pi z_j \frac{k}{2^t}\,\frac{2^{m-w_s}}{2^{m-w_j}}\right)}\right)\\
&=&
\sum_{\ell=0}^{2^{w_s}-1} \ \ \sum_{t=1}^{m-w_s} \sum_{\substack{k=\ell 2^t + 1\\ k\equiv 1 \tmod{2}}}^{(\ell+1) 2^t-1}
\sum_{\substack{\emptyset\neq \fraku\subseteq [s]\\ s\in\fraku }} \gamma_{\fraku}
 \left(\prod_{\substack{j\in\fraku\\ j\neq s}} 
\log \left(\frac{1}{\sin^2 \left(\pi z_j \frac{k}{2^t}\,\frac{2^{m-w_s}}{2^{m-w_j}}\right)}\right)\right)\\
&&\times  \frac{1}{2^{m-w_s-v}} \sum_{\overline{z}\in \ZZ_{2^{m-w_s-v}}} 
 \log\left(\frac{1}{\sin^2 (\pi (\widetilde{z}_s + \overline{z} 2^v) k / 2^t)}\right),
\end{eqnarray*} where we remind the reader that 
$\widetilde{z}_s$ = $z_s^{(0)} + z_s^{(1)}2 + \cdots + z_s^{(v-2)}2^{v-2} + z 2^{v-1}$.
Therefore, in the above summation, if $t\in\{1,2,\ldots,v-1\}$, then 
\[
 \sin^2 (\pi (\widetilde{z}_s + \overline{z} 2^v) k / 2^t) 
 =\sin^2 (\pi (z_s^{(0)} + \cdots + z_s^{(v-2)} 2^{v-2}) k / 2^t).
\]
Consequently, we obtain
\begin{eqnarray*}
 \lefteqn{\frac{1}{2^{m-w_s-v}} \sum_{\overline{z}\in \ZZ_{2^{m-w_s-v}}} 
\sum_{\substack{k=0\\ k\not\equiv 0 \tmod{2^{m-w_s}}}}^{N-1}
 \sum_{\substack{\emptyset\neq \fraku\subseteq [s]\\ s\in\fraku }} \gamma_{\fraku}
 \prod_{j\in\fraku} \log \left(\frac{1}{\sin^2 (\pi z_j k / 2^{m-w_j})}\right)}\\ 
&=& 
 \sum_{\ell=0}^{2^{w_s}-1} \ \ \sum_{t=1}^{m-w_s} \sum_{\substack{k=\ell 2^t + 1\\ k\equiv 1 \tmod{2}}}^{(\ell+1) 2^t-1}
 \sum_{\substack{\emptyset\neq \fraku\subseteq [s]\\ s\in\fraku }} \gamma_{\fraku}
 \left(\prod_{\substack{j\in\fraku\\ j\neq s}} 
 \log \left(\frac{1}{\sin^2 \left(\pi z_j \frac{k}{2^t}\,\frac{2^{m-w_s}}{2^{m-w_j}}\right)}\right)\right)\\
 &&\times  \frac{1}{2^{m-w_s-v}} \sum_{\overline{z}\in \ZZ_{2^{m-w_s-v}}} 
 \log\left(\frac{1}{\sin^2 (\pi (\widetilde{z}_s + \overline{z} 2^v) k / 2^t)}\right)\\
&=&
 \sum_{\ell=0}^{2^{w_s}-1} \ \ 
 \sum_{t=1}^{v-1} \sum_{\substack{k=\ell 2^t + 1\\ k\equiv 1 \tmod{2}}}^{(\ell +1) 2^t-1}
 \sum_{\substack{\emptyset\neq \fraku\subseteq [s]\\ s\in\fraku }} \gamma_{\fraku}
 \left(\prod_{\substack{j\in\fraku\\ j\neq s}} 
 \log \left(\frac{1}{\sin^2 \left(\pi z_j \frac{k}{2^t}\,\frac{2^{m-w_s}}{2^{m-w_j}}\right)}\right)\right)\\
 &&\times 
 \log\left(\frac{1}{\sin^2 (\pi (z_s^{(0)} + z_s^{(1)} 2+\cdots + z_s^{(v-2)} 2^{v-2}) k / 2^t)}\right)\\
 &+&\sum_{\ell=0}^{2^{w_s}-1} \ \ \sum_{t=v}^{m-w_s} \sum_{\substack{k=\ell 2^t + 1\\ k\equiv 1 \tmod{2}}}^{(\ell +1) 2^t-1}
 \sum_{\substack{\emptyset\neq \fraku\subseteq [s]\\ s\in\fraku }} \gamma_{\fraku}
 \left(\prod_{\substack{j\in\fraku\\ j\neq s}} 
 \log \left(\frac{1}{\sin^2 \left(\pi z_j \frac{k}{2^t}\,\frac{2^{m-w_s}}{2^{m-w_j}}\right)}\right)\right)\\
 &&\times  \frac{1}{2^{m-w_s-v}} \sum_{\overline{z}\in \ZZ_{2^{m-w_s-v}}} 
 \log\left(\frac{1}{\sin^2 (\pi (\widetilde{z}_s + \overline{z} 2^v) k / 2^t)}\right).
\end{eqnarray*}

For the last expression, we argue as in the proof of Lemma 6 in \cite{EKNO21} to obtain, for $t\in \{v,\dots,m-w_s\}$, 
\begin{eqnarray*}
& &\sum_{\ell=0}^{2^{w_s}-1} \ \ \sum_{t=v}^{m-w_s} \sum_{\substack{k=\ell 2^t + 1\\ k\equiv 1 \tmod{2}}}^{(\ell +1) 2^t-1}
 \sum_{\substack{\emptyset\neq \fraku\subseteq [s]\\ s\in\fraku }} \gamma_{\fraku}
 \prod_{\substack{j\in\fraku\\ j\neq s}} 
 \log \left(\frac{1}{\sin^2 \left(\pi z_j \frac{k}{2^t}\,\frac{2^{m-w_s}}{2^{m-w_j}}\right)}\right)\\
 &&\times  \frac{1}{2^{m-w_s-v}} \sum_{\overline{z}\in \ZZ_{2^{m-w_s-v}}} 
 \log\left(\frac{1}{\sin^2 (\pi (\widetilde{z}_s + \overline{z} 2^v) k / 2^t)}\right)\\&=&
  \sum_{\ell=0}^{2^{w_s}-1} \ \ \sum_{t=v}^{m-w_s} \sum_{\substack{k=\ell 2^t + 1\\ k\equiv 1 \tmod{2}}}^{(\ell +1) 2^t-1}
 \sum_{\substack{\emptyset\neq \fraku\subseteq [s]\\ s\in\fraku }} \gamma_{\fraku}
 \prod_{\substack{j\in\fraku\\ j\neq s}} 
 \log \left(\frac{1}{\sin^2 \left(\pi z_j \frac{k}{2^t}\,\frac{2^{m-w_s}}{2^{m-w_j}}\right)}\right)\\
 &&\times  \frac{2^{m-w_s-t}}{2^{m-w_s-v}} \sum_{\overline{z}\in \ZZ_{2^{t-v}}} 
 \log\left(\frac{1}{\sin^2 (\pi (\widetilde{z}_s + \overline{z} 2^v) k / 2^t)}\right)\\
 &=& 
 \sum_{\ell=0}^{2^{w_s}-1} \ \ \sum_{t=v}^{m-w_s} \sum_{\substack{k=\ell 2^t + 1\\ k\equiv 1 \tmod{2}}}^{(\ell +1) 2^t-1}
 \sum_{\substack{\emptyset\neq \fraku\subseteq [s]\\ s\in\fraku }} \gamma_{\fraku}
 \prod_{\substack{j\in\fraku\\ j\neq s}} 
 \log \left(\frac{1}{\sin^2 \left(\pi z_j \frac{k}{2^t}\,\frac{2^{m-w_s}}{2^{m-w_j}}\right)}\right)\\
 &&\times\left((1-2^{v-t})\log 4 + 2^{v-t}\log \left(\frac{1}{\sin^2 (\pi \widetilde{z}_s k / 2^v)}\right)\right),
\end{eqnarray*}
where in the last step we proceeded exactly as in the proof of Lemma 6 in \cite{EKNO21}. 
This, together with the previous identity yields the claim.
\end{proof}
We observe that in Lemma \ref{avg_sum} only the first term depends on the $v$th bit $z$ of $z_s$, 
while $S_{N,v,\bsgamma,\bsw}(\bsz)$ is independent of this bit. 
This now leads to the introduction of the following digit-wise quality function for the reduced CBC-DBD 
algorithm which is based on the first term in \eqref{eqn:hfunc}. Note that the quality 
function is not exactly equal to the first term in \eqref{eqn:hfunc}, but we add further terms that, 
though independent of the argument of the function, facilitate fast implementation (see Section \ref{sec:fast_impl}).

\begin{definition}\label{def:h_rv}
Let $x\in\N$ be an odd integer, let $m,d^*\in \N$, let $\bsw=(w_j)_{j\ge 1}$ be a sequence of reduction indices in $\N_0$
with $0=w_1\le w_2\le \cdots$, and let $\bsgamma = (\gamma_j)_{j\ge 1}$ 
be a sequence of positive weights. For $1\le s\le d^*$ and $1\le v \le m-w_s$ we define the quality function 
$h_{s,v,m,\bsgamma,\bsw}: \Z \rightarrow \R$ as
\begin{eqnarray*}
 h_{s,v,m,\bsgamma,\bsw} (x):&=& 
 \sum_{t=v}^{m-w_s} \frac{1}{2^{t-v}} 
 \sum_{\ell=0}^{2^{w_s}-1} \ \ \sum_{\substack{k=\ell 2^t + 1\\ k\equiv 1 \tmod{2}}}^{(\ell +1) 2^t-1}
 \sum_{\substack{\fraku\subseteq [s]\\ s\in\fraku }} \gamma_{\fraku\setminus \{s\}} 
 \prod_{\substack{j\in\fraku\\ j\neq s}} 
 \log \left(\frac{1}{\sin^2 \left(\pi z_j \frac{k}{2^t}\,\frac{2^{m-w_s}}{2^{m-w_j}}\right)}\right)\\
 &&\times \left(1+\gamma_s\log \left(\frac{1}{\sin^2 (\pi k x / 2^v)}\right)\right),
\end{eqnarray*}
where we assume that $z_j\in \{1,3,\ldots,2^{m-w_j}-1\}$ is odd for $j\in [s-1]$.
\end{definition}
Based on the quality function $h_{s,v,m,\bsgamma,\bsw}$, we formulate the following reduced component-by-component digit-by-digit 
(CBC-DBD) algorithm.

\begin{algorithm}[H] 
	\caption{Reduced component-by-component digit-by-digit construction}	
	\label{alg:redCBCDBD}
	\vspace{5pt}
	\textbf{Input:} Integer $m \in \N$, dimension $d, \bsw=(w_j)_{j\ge 1}$ with $0=w_1 \le w_2 \le \cdots $ 
	and $Y_j = 2^{w_j}$ for $j \in \{1, \dots , d\}$,
        and positive weights $\bsgamma=(\gamma_j)_{j\ge 1}$. \\
	\vspace{-7pt}
		\begin{algorithmic}
			\STATE Set $z_1=z_{1,m} = 1$ and $z_{1,1} = z_{1,2} = \cdots = z_{d,1} = 1$.
			\STATE If $d> d^*$, set $z_{d^*+1}= \cdots = z_d =1$.
			\vspace{5pt}
			\FOR{$s=2$ \TO $\min \{d,d^*\}$}
			\FOR{$v=2$ \TO $m-w_s$}
			\STATE $z^{\ast} = \underset{z \in \{0,1\}}{\argmin} \; h_{s,v,m,\bsgamma,\bsw}(z_{s,v-1} + z \, 2^{v-1})$
			\STATE $z_{s,v} = z_{s,v-1} + z^{\ast} \, 2^{v-1}$
			\ENDFOR
			\STATE $z_s = z_{s,m-w_s}$
			\ENDFOR
			\vspace{5pt}
			\STATE Set $\bsz = (Y_1z_1,\ldots,Y_dz_d)$.
		\end{algorithmic}
	\vspace{5pt}
	\textbf{Return:} Generating vector $\bsz$ for $N=2^m$.
\end{algorithm}

\subsection{Error convergence behavior of the constructed lattice rules}
In the following, we study the worst-case error behavior of the constructed lattice rules, i.e, we want to show that under 
certain suitable conditions on the weights $\bsgamma$, Algorithm \ref{alg:redCBCDBD} can construct 
generating vectors which yield lattice point sets with very good properties if they are used as integration nodes 
in a QMC rule. For $\bsz=(Y_1z_1,\ldots,Y_dz_d)$, we write $\bsz_{[s]}$ to denote the vector $(z_1,\ldots,z_s)$ for $s\in [d]$.

\begin{theorem}\label{theorem:H-induction}
Let $m\in \N$, $N = 2^m$, $\bsw = (w_j)_{j\ge 1}$ in $\N_0$ with $0=w_1\le w_2\le \cdots $,
and let  $\bsgamma = (\gamma_j)_{j\ge 1}$ be positive product weights. 
Furthermore, let the generating vector $\bsz=(Y_1z_1,\ldots,Y_dz_d) \in \Z^d$ be constructed by Algorithm \ref{alg:redCBCDBD}. Denote 
by $\bsz_{[s]}$ the vector $(z_1,\ldots,z_s)$ for $s\in [d]$.
Then the following estimate holds for $s\in [d^*]$,
\[
H_{s,N,\bsgamma,\bsw}(\bsz_{[s]}) \le (1+\gamma_{s} \log 4) H_{s-1,N,\bsgamma,\bsw}(\bsz_{[s-1]}) 
 + \gamma_{s} (\log 4) (2^m - 2^{w_s}). 
\]
\end{theorem}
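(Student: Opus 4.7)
The approach is an averaging-and-greedy argument of the type used in \cite{EKNO21}. The starting point is the decomposition
\[
 H_{s,N,\bsgamma,\bsw}(\bsz_{[s]}) = H_{s-1,N,\bsgamma,\bsw}(\bsz_{[s-1]}) + R_s(\bsz_{[s]}),
\]
obtained by isolating the $r=s$ summand in equation~\eqref{H_qty} from the proof of Lemma~\ref{avg_sum}. The term $R_s$ collects the contributions from subsets $\fraku\subseteq[s]$ with $s\in\fraku$, so it suffices to show
\[
 R_s(\bsz_{[s]}) \le \gamma_s(\log 4)\,H_{s-1,N,\bsgamma,\bsw}(\bsz_{[s-1]}) + \gamma_s(\log 4)\bigl(2^m-2^{w_s}\bigr).
\]

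For each $v\in\{1,\ldots,m-w_s\}$, I would define $\mathcal{A}_v(z_{s,v})$ to be the average of $R_s(\bsz_{[s-1]},z_{s,v}+\overline{z}\cdot 2^v)$ over $\overline{z}\in\ZZ_{2^{m-w_s-v}}$, with the convention $\mathcal{A}_{m-w_s}(z_s)=R_s(\bsz_{[s]})$. Splitting $\ZZ_{2^{m-w_s-v+1}}$ as a single bit plus $\ZZ_{2^{m-w_s-v}}$ gives the elementary recursion $\mathcal{A}_{v-1}(z_{s,v-1})=\tfrac{1}{2}\sum_{z\in\{0,1\}} \mathcal{A}_v(z_{s,v-1}+z\cdot 2^{v-1})$. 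Comparing Lemma~\ref{avg_sum} with Definition~\ref{def:h_rv} shows that $\mathcal{A}_v(x)$ and $h_{s,v,m,\bsgamma,\bsw}(x)$ differ only by a quantity independent of~$x$, so the greedy step in Algorithm~\ref{alg:redCBCDBD} that minimises $h_{s,v,m,\bsgamma,\bsw}$ also minimises~$\mathcal{A}_v$. Combining these two facts yields $\mathcal{A}_v(z_{s,v})\le \mathcal{A}_{v-1}(z_{s,v-1})$ for $v\ge 2$, and iterating gives $R_s(\bsz_{[s]}) = \mathcal{A}_{m-w_s}(z_s)\le \mathcal{A}_1(1)$.

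It then remains to estimate $\mathcal{A}_1(1)$. Applying Lemma~\ref{avg_sum} with $v=1$ and $\widetilde{z}_s=1$, the ``first term'' of~\eqref{eqn:hfunc} vanishes because $\sin^2(\pi k/2)=1$ for odd~$k$, and the ``second term'' of $S_{N,1,\bsgamma,\bsw}$ is empty, so $\mathcal{A}_1(1)$ reduces to the ``third term''. The contribution coming from $\fraku=\{s\}$ is bounded by $\gamma_s(\log 4)\sum_{\ell=0}^{2^{w_s}-1}\sum_{t=1}^{m-w_s}2^{t-1}=\gamma_s(\log 4)(2^m-2^{w_s})$, using $(2^{t-1}-1)/2^{t-1}\le 1$ and counting the $2^{t-1}$ odd integers $k$ in each block. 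For the contribution coming from $\fraku=\{s\}\cup\fraku'$ with $\emptyset\ne\fraku'\subseteq[s-1]$, the substitution $K:=k\cdot 2^{m-w_s-t}$ converts each argument $\pi z_jk/2^{t+w_s-w_j}$ into $\pi z_jK/2^{m-w_j}$; by identity~\eqref{eq:nice_identity} the pairs $(t,k)$ with $k$ odd in $[\ell 2^t+1,(\ell+1)2^t-1]$ are in bijection with the integers $K\in[\ell 2^{m-w_s}+1,(\ell+1)2^{m-w_s}-1]$, so summing over $\ell\in\{0,\ldots,2^{w_s}-1\}$ exactly produces the set $\{K\in\{1,\ldots,N-1\}: K\not\equiv 0 \tmod{2^{m-w_s}}\}$. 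Since $w_{j_{|\fraku'|}}\le w_s$ this set is contained in the corresponding summation domain $\{K: K\not\equiv 0 \tmod{2^{m-w_{j_{|\fraku'|}}}}\}$ appearing in $H_{s-1,N,\bsgamma,\bsw}$, and by non-negativity of $\log\sin^{-2}$ this contribution is at most $\gamma_s(\log 4)\,H_{s-1,N,\bsgamma,\bsw}(\bsz_{[s-1]})$. Adding the two bounds gives the claim.

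The principal technical obstacle is the last paragraph: verifying that the reindexing $K=k\cdot 2^{m-w_s-t}$ (for fixed~$\ell$) is indeed a bijection via~\eqref{eq:nice_identity}, and then noticing that the resulting summation set embeds into the domain of $H_{s-1,N,\bsgamma,\bsw}$, so that non-negativity of the summands closes the estimate. The averaging recursion and the greedy inequality are then purely formal.
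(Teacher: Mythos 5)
Your proposal is correct and follows essentially the same route as the paper's proof: the greedy-versus-average recursion over the bits of $z_s$ (using that $h_{s,v,m,\bsgamma,\bsw}$ differs from the conditional average only by an $x$-independent term), the evaluation at $v=1$ where $\log(\sin^{-2}(\pi k/2))=0$ kills the first term of Lemma~\ref{avg_sum}, and the reindexing via~\eqref{eq:nice_identity} followed by enlarging the summation domain from $k\not\equiv 0\tmod{2^{m-w_s}}$ to the domains appearing in $H_{s-1,N,\bsgamma,\bsw}$ using non-negativity of the summands. The only cosmetic difference is that you peel off $H_{s-1,N,\bsgamma,\bsw}$ as a constant at the outset and average only the remainder $R_s$, whereas the paper carries the full $H_{s,N,\bsgamma,\bsw}$ through the averaging and identifies the $H_{s-1}$ part at the end; the two are equivalent since that part is independent of $z_s$.
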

\begin{proof}
We will prove the stated estimate via an inductive argument over the selection of the $m-w_s$ bits of the component $z_s$ 
for $s\in [d^*]$. We first observe that according to the formulation of Algorithm \ref{alg:redCBCDBD}, the $v$th bit of $z_s$ 
with $v\in\{2,\dots,m-w_s\}$, has been selected by minimizing $h_{s,v,m,\bsgamma,\bsw}(z_{s,v-1}+z 2^{v-1})$ 
with respect to the choices $z\in\{0,1\}$, and that we have chosen $z_{s,v-1}$ by the same algorithm. 
By Lemma \ref{avg_sum} and Definition \ref{def:h_rv} this is equivalent to minimizing 
\[
 \frac{1}{2^{m-w_s-v}} \sum_{\overline{z}\in \ZZ_{2^{m-w_s-v}}} 
 H_{s,N,\bsgamma,\bsw} (z_1,\ldots,z_{s-1},z_{s,v-1} + z 2^{v-1}+ \overline{z}2^v)
\]
with respect to $z\in \{0,1\}$. By the standard averaging argument, this yields 
\begin{eqnarray}\notag
 \lefteqn{\argmin_{z\in  \{0,1\}} \frac{1}{2^{m-w_s-v}} \sum_{\overline{z}\in \ZZ_{2^{m-w_s-v}}} 
 H_{s,N,\bsgamma,\bsw} (z_1,\ldots,z_{s-1},z_{s,v-1} + z 2^{v-1}+ \overline{z}2^v)}\\\label{std_avg}
&\le & \frac{1}{2} \frac{1}{2^{m-w_s-v}} \sum_{z\in \ZZ_2}
\sum_{\overline{z}\in \ZZ_{2^{m-w_s-v}}} H_{s,N,\bsgamma,\bsw} (z_1,\ldots,z_{s-1},z_{s,v-1} + z 2^{v-1}+ \overline{z}2^v)\\\notag
&=& \frac{1}{2^{m-w_s-v +1}}\sum_{\overline{z}\in \ZZ_{2^{m-w_s-v +1}}} 
H_{s,N,\bsgamma,\bsw} (z_1,\ldots,z_{s-1},\underbrace{z_{s,v-2}+ \tilde{z} \, 2^{v-2}}_{=z_{s,v-1}} + \bar{z} \, 2^{v-1} ),
\end{eqnarray}
where we split up $z_{s,v-1}$ according to Algorithm \ref{alg:redCBCDBD} such that $\tilde{z}$ is the 
$(v-1)$th bit of $z_s$, selected in the previous step of the algorithm. Noting that the inequality in \eqref{std_avg}
holds for any $v\in\{2,\ldots,m-w_s\}$, we can inductively use this estimate for $v=m-w_s,m-w_s-1,\ldots, 2$ to obtain
\begin{eqnarray}\notag
 H_{s,N,\bsgamma,\bsw} (\bsz_{[s]}) &=& \argmin_{z\in\{0,1\}} H_{s,N,\bsgamma,\bsw} (z_1,\ldots,z_{s-1},z_{s,m-w_s-1}
 + z 2^{m-w_s -1})\\\label{ind_v_1}
 &\le& \frac{1}{2^{m-w_s-1}}\sum_{\overline{z}\in \ZZ_{2^{m-w_s -1}}} H_{s,N,\bsgamma,\bsw} (z_1,\ldots,z_{s-1}, 1 + \overline{z} 2 ),
\end{eqnarray}
where we used that $z_{s,1}=1$. Now setting $v=1$ in our expression in Lemma \ref{avg_sum}  
to equate the right-hand side term in \eqref{ind_v_1}, we get 
\begin{eqnarray*}
 H_{s,N,\bsgamma,\bsw} (\bsz_{[s]}) &\le& 
 \sum_{t=1}^{m-w_s} \frac{1}{2^{t-1}} \sum_{\ell=0}^{2^{w_s}-1} \sum_{\substack{k=\ell 2^t +1 \\  k \equiv 1 \tmod{2}}}^{(\ell +1)2^t -1}
 \sum_{\substack{\emptyset\neq\fraku \subseteq [s]\\ s\in\fraku}}\gamma_{\fraku}
 \prod_{\substack{j\in\fraku \\ j\neq s}} \log\left(\frac{1}{\sin^2 \left(\pi z_j \frac{k}{2^t} \frac{2^{m-w_s}}{2^{m-w_j}}\right)  }\right)\\
 &&\times \log\left(\frac{1}{\sin^2 (\pi k / 2)}\right)\\
 &&+ \sum_{r=1}^{s-1} \sum_{\substack{k=0\\ k\not\equiv 0 \tmod{2^{m-w_{r}}}}}^{N-1}
 \sum_{\substack{\emptyset\neq \fraku\subseteq [r]\\ r\in\fraku }} \gamma_{\fraku}
 \prod_{j\in\fraku} \log \left(\frac{1}{\sin^2 (\pi z_j k / 2^{m-w_j})}\right)\\
 &&+ \sum_{\ell=0}^{2^{w_s}-1} \ \ \sum_{t=1}^{m-w_s}\frac{2^{t-1}-1}{2^{t-1}} (\log 4) 
 \sum_{\substack{k=\ell 2^t + 1\\ k\equiv 1 \tmod{2}}}^{(\ell +1) 2^t-1}
 \sum_{\substack{\emptyset\neq \fraku\subseteq [s]\\ s\in\fraku }} \gamma_{\fraku}\\
 &&\times\prod_{\substack{j\in\fraku\\ j\neq s}} 
 \log \left(\frac{1}{\sin^2 \left(\pi z_j \frac{k}{2^t}\,\frac{2^{m-w_s}}{2^{m-w_j}}\right)}\right).
\end{eqnarray*}
Note that for odd $k$ we have $\log (\sin^{-2}(\pi k/2)) = \log 1 = 0$,
so we obtain, using \eqref{H_qty} and \eqref{eq:nice_identity},
\begin{eqnarray*}
 H_{s,N,\bsgamma,\bsw} (\bsz_{[s]}) &\le&
\sum_{r=1}^{s-1} \sum_{\substack{k=0\\ k\not\equiv 0 \tmod{2^{m-w_{r}}}}}^{N-1}
 \sum_{\substack{\emptyset\neq \fraku\subseteq [r]\\ r\in\fraku }} \gamma_{\fraku}
 \prod_{j\in\fraku} \log \left(\frac{1}{\sin^2 (\pi z_j k / 2^{m-w_j})}\right)\\
 &&+ \sum_{\ell=0}^{2^{w_s}-1} \ \ \sum_{t=1}^{m-w_s} (\log 4) 
 \sum_{\substack{k=\ell 2^t + 1\\ k\equiv 1 \tmod{2}}}^{(\ell +1) 2^t-1}
 \sum_{\substack{\emptyset\neq \fraku\subseteq [s]\\ s\in\fraku }} \gamma_{\fraku}\prod_{\substack{j\in\fraku\\ j\neq s}} 
 \log \left(\frac{1}{\sin^2 \left(\pi z_j \frac{k}{2^t}\,\frac{2^{m-w_s}}{2^{m-w_j}}\right)}\right)\\
 &=&   H_{s-1,N,\bsgamma,\bsw} (\bsz_{[s-1]})\\
 && + (\log 4) \sum_{\ell=0}^{2^{w_s}-1} \ \ \sum_{t=1}^{m-w_s}
 \sum_{\substack{k=\ell 2^t + 1\\ k\equiv 1 \tmod{2}}}^{(\ell +1) 2^t-1}
 \sum_{\substack{\emptyset\neq \fraku\subseteq [s]\\ s\in\fraku }} \gamma_{\fraku}\prod_{\substack{j\in\fraku\\ j\neq s}} 
 \log \left(\frac{1}{\sin^2 \left(\pi z_j \frac{k}{2^t}\,\frac{2^{m-w_s}}{2^{m-w_j}}\right)}\right)\\
 &=&   H_{s-1,N,\bsgamma,\bsw} (\bsz_{[s-1]})\\
 && + (\log 4)\sum_{\ell=0}^{2^{w_s}-1} 
 \sum_{k=\ell 2^{m-w_s} + 1}^{(\ell +1) 2^{m-w_s}-1}
 \sum_{\substack{\emptyset\neq \fraku\subseteq [s]\\ s\in\fraku }} \gamma_{\fraku}\prod_{\substack{j\in\fraku\\ j\neq s}} 
 \log \left(\frac{1}{\sin^2 \left(\pi z_j k / 2^{m-w_j}\right)}\right)\\
 &=&   H_{s-1,N,\bsgamma,\bsw} (\bsz_{[s-1]})\\
 && + (\log 4)\, \gamma_s 
 \sum_{\substack{k=0\\ k\not\equiv 0 \tmod{2^{m-w_s}}}}^{N-1}
 \sum_{\fraku\subseteq [s-1]} \gamma_{\fraku}\prod_{j\in\fraku} 
 \log \left(\frac{1}{\sin^2 \left(\pi z_j k / 2^{m-w_j}\right)}\right)\\
 &=&   H_{s-1,N,\bsgamma,\bsw} (\bsz_{[s-1]})\\
 && + (\log 4)\, \gamma_s \left[ 
 \sum_{\substack{k=0\\ k\not\equiv 0 \tmod{2^{m-w_s}}}}^{N-1} 1\right.\\ 
 &&+\left.
 \sum_{\substack{k=0\\ k\not\equiv 0 \tmod{2^{m-w_s}}}}^{N-1}
 \sum_{\emptyset\neq\fraku\subseteq [s-1]} \gamma_{\fraku}\prod_{j\in\fraku} 
 \log \left(\frac{1}{\sin^2 \left(\pi z_j k / 2^{m-w_j}\right)}\right)\right]\\
 &=&   H_{s-1,N,\bsgamma,\bsw} (\bsz_{[s-1]})\\
 && + (\log 4)\, \gamma_s \left[ (2^m - 2^{w_s})
 \phantom{\sum_{\substack{k=0\\ k\not\equiv 0 \tmod{2^{m-w_s}}}}^{N-1}} \right.\\ 
 &&+\left.
 \sum_{r=1}^{s-1}\sum_{\substack{k=0\\ k\not\equiv 0 \tmod{2^{m-w_s}}}}^{N-1}
 \sum_{\substack{\emptyset\neq\fraku\subseteq [r-1]\\ r\in\fraku}} \gamma_{\fraku}\prod_{j\in\fraku} 
 \log \left(\frac{1}{\sin^2 \left(\pi z_j k / 2^{m-w_j}\right)}\right)\right]\\ 
 &\le&   H_{s-1,N,\bsgamma,\bsw} (\bsz_{[s-1]})\\
 && + (\log 4)\, \gamma_s \left[ (2^m - 2^{w_s})
 \phantom{\sum_{\substack{k=0\\ k\not\equiv 0 \tmod{2^{m-w_s}}}}^{N-1}} \right.\\ 
 &&+\left.
 \sum_{r=1}^{s-1}\sum_{\substack{k=0\\ k\not\equiv 0 \tmod{2^{m-w_r}}}}^{N-1}
 \sum_{\substack{\emptyset\neq\fraku\subseteq [r-1]\\ r\in\fraku}} \gamma_{\fraku}\prod_{j\in\fraku} 
 \log \left(\frac{1}{\sin^2 \left(\pi z_j k / 2^{m-w_j}\right)}\right)\right]\\  
 &=&   H_{s-1,N,\bsgamma,\bsw} (\bsz_{[s-1]})\\
 &&+(\log4) \left[\gamma_s (2^m-2^{w_s}) + \gamma_s H_{s-1,N,\bsgamma,\bsw}(\bsz_{[s-1]})\right].
\end{eqnarray*} 
Hence we have the claimed result.
\end{proof}
Based on the result in Theorem \ref{theorem:H-induction} we can use an inductive argument to show 
that the quantity $H_{s,N,\bsgamma,\bsw}$ is sufficiently small if $\bsz_{[s]}$ has been constructed by Algorithm \ref{alg:redCBCDBD}. 
We obtain the following estimate.
\begin{theorem}\label{theorem:upper_bound_H}
Let $m\in \N$, $N = 2^m$, $\bsw = (w_j)_{j\ge 1}$ in $\N_0$ with $0=w_1\le w_2\le \cdots $,
and let  $\bsgamma = (\gamma_j)_{j\ge 1}$ be positive product weights. 
Furthermore, let the generating vector $\bsz=(Y_1z_1,\ldots,Y_dz_d) \in \Z^d$ be constructed by Algorithm \ref{alg:redCBCDBD}. Denote 
by $\bsz_{[s]}$ the vector $(z_1,\ldots,z_s)$ for $s\in [d]$.
Then the following upper bound on $H_{s,N,\bsgamma,\bsw}(\bsz_{[s]})$ holds for $s\in [d^*]$,
\[
H_{s,N,\bsgamma,\bsw}(\bsz_{[s]}) \le N \left[ -1 + \prod_{j=1}^s (1 + \gamma_j \log 4) \right].
\]
\end{theorem}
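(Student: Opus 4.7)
The plan is to prove the bound by induction on $s \in \{0,1,\ldots,d^*\}$, using Theorem \ref{theorem:H-induction} as the one-step recurrence and $2^m - 2^{w_s} \le 2^m = N$ as a trivial slackening.

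For the base case $s=0$, note that $H_{0,N,\bsgamma,\bsw}$ is, by the definition in \eqref{H_qty}, a sum over $\emptyset \neq \fraku \subseteq [0] = \emptyset$, which is empty. Hence $H_{0,N,\bsgamma,\bsw} = 0$, which matches the right-hand side $N\bigl[-1 + \prod_{j=1}^{0}(1+\gamma_j\log 4)\bigr] = N[-1+1] = 0$.

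For the inductive step, assume the bound holds at $s-1$, i.e.\ $H_{s-1,N,\bsgamma,\bsw}(\bsz_{[s-1]}) \le N\bigl[-1 + \prod_{j=1}^{s-1}(1+\gamma_j \log 4)\bigr]$. Applying Theorem \ref{theorem:H-induction} and using $2^m - 2^{w_s} \le N$, I would write
\begin{align*}
H_{s,N,\bsgamma,\bsw}(\bsz_{[s]})
&\le (1+\gamma_s \log 4)\, H_{s-1,N,\bsgamma,\bsw}(\bsz_{[s-1]}) + \gamma_s (\log 4)\, N \\
&\le (1+\gamma_s \log 4)\, N\!\left[-1 + \prod_{j=1}^{s-1}(1+\gamma_j\log 4)\right] + N\gamma_s\log 4,
\end{align*}
and then expand: the $(1+\gamma_s\log 4)\cdot(-1)$ term contributes $-N-N\gamma_s\log 4$, which combines with the final $+N\gamma_s\log 4$ to give $-N$, while the product term becomes $N\prod_{j=1}^{s}(1+\gamma_j\log 4)$. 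This yields exactly $N\bigl[-1 + \prod_{j=1}^{s}(1+\gamma_j\log 4)\bigr]$, completing the induction.

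The proof is essentially a one-line algebraic manipulation on top of Theorem \ref{theorem:H-induction}; there is no real obstacle. The only minor subtlety is making sure the base case is correctly handled, either by starting at $s=0$ with the empty-sum observation as above, or alternatively by starting at $s=1$ (where $z_1 = 1$ and $w_1 = 0$) and verifying the bound $H_{1,N,\bsgamma,\bsw} \le N\gamma_1 \log 4$ directly from the recurrence with $H_0 = 0$.
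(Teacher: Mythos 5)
Your proof is correct in structure and is essentially the paper's argument run forwards rather than unrolled: both rest entirely on the recurrence of Theorem \ref{theorem:H-induction}, and your version is algebraically a little cleaner because you apply the slackening $2^m-2^{w_s}\le N$ at every step of a forward induction, instead of unrolling the recursion and carrying the terms $2^m-2^{\min_{j\in\fraku}w_j}$ through a sum over subsets of $\{2,\ldots,s\}$ as the paper does before bounding them by $2^m$.

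The one point you should tighten is the base case. You start at $s=0$ and then invoke Theorem \ref{theorem:H-induction} for the step from $s=0$ to $s=1$. That theorem is indeed stated for all $s\in[d^*]$, but its proof is an averaging argument over the bit choices made by Algorithm \ref{alg:redCBCDBD}, and the algorithm makes no such choices for the first component: it simply sets $z_1=1$. Consistently with this, the paper's own proof of the present theorem uses the recurrence only for $2\le s\le d^*$ and handles $s=1$ by the explicit evaluation
\[
H_{1,N,\bsgamma,\bsw}(1)=\gamma_1\sum_{k=1}^{N-1}\log\left(\frac{1}{\sin^2(\pi k/N)}\right)=\gamma_1\,(N-m-1)\log 4\le \gamma_1 N\log 4,
\]
which follows from the identity $\prod_{k=1}^{N-1}2\sin(\pi k/N)=N$. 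Your alternative remark about ``verifying $H_1\le N\gamma_1\log 4$ directly from the recurrence with $H_0=0$'' still leans on the $s=1$ case of that recurrence, so it does not remove the issue; to make the argument self-contained, replace the base case by the explicit computation above (or any other direct bound on $H_1$). With that in place, your induction goes through verbatim.
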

\begin{proof} Observe that due to Theorem \ref{theorem:H-induction},
\begin{eqnarray}
 H_{s,N,\bsgamma,\bsw}(\bsz_{[s]}) \le H_{s-1,N,\bsgamma,\bsw} (\bsz_{[s-1]})+ (\log 4)\left[ \gamma_s (2^m - 2^{w_s})
+  \gamma_s H_{s-1,N,\bsgamma,\bsw}(\bsz_{[s-1]})\right]
\end{eqnarray} 
holds for $2 \le s \le d^*$.
We can apply this estimate inductively to obtain the following,
\begin{eqnarray*}
 \lefteqn{H_{s,N,\bsgamma,\bsw} (\bsz_{[s]})}\\ 
 &\leq& (1+\gamma_s \log 4)\left[(1+\gamma_{s-1} \log 4) H_{s-2,N,\bsgamma,\bsw}(\bsz_{[s-2]}) 
 + \gamma_{s-1} (\log4) (2^m - 2^{w_{s-1}})\right] \\
 &&+ \gamma_s (\log4) (2^m - 2^{w_s})\\
 &=& H_{s-2,N,\bsgamma,\bsw}(\bsz_{[s-2]}) \prod_{j=s-1}^s (1+ \gamma_j \log 4) 
 + \gamma_{s-1} (\log4) (2^m - 2^{w_{s-1}})\\
 &&+\gamma_s \gamma_{s-1} (\log 4)^2 (2^m - 2^{w_{s-1}}) + \gamma_s (\log 4) (2^m - 2^{w_s})\\
 &\le & \left(\prod_{j=s-1}^s (1 + \gamma_j \log 4)\right)\left[(1+\gamma_{s-2} \log 4) H_{s-3,N,\bsgamma,\bsw} (\bsz_{[s-3]})
 + \gamma_{s-2} (\log 4) (2^m - 2^{w_{s-2}})\right]\\
 &&+ \gamma_{s-1} (\log 4) (2^m - 2^{w_{s-1}})+ \gamma_s \gamma_{s-1} (\log 4)^2 (2^m - 2^{w_{s-1}}) 
 + \gamma_s (\log 4) (2^m - 2^{w_s})\\
 &=& H_{s-3,N,\bsgamma,\bsw}(\bsz_{[s-3]}) \prod_{j=s-2}^s (1+ \gamma_j \log 4)
  + \gamma_{s-2} (\log 4) (2^m - 2^{w_{s-2}})\\
  &&+ \gamma_s \gamma_{s-2} (\log4)^2 (2^m - 2^{w_{s-2}})+  \gamma_{s-1} \gamma_{s-2} (\log 4)^2 (2^m - 2^{w_{s-2}})\\
  &&+ \gamma_s \gamma_{s-1} \gamma_{s-2} (\log4)^3 (2^m - 2^{w_{s-2}}) 
  +  \gamma_{s-1} (\log 4) (2^m - 2^{w_{s-1}})\\
 &&+ \gamma_s \gamma_{s-1} (\log 4)^2 (2^m - 2^{w_{s-1}}) + \gamma_s (\log 4) (2^m - 2^{w_s}).
\end{eqnarray*}
Repeating this argument inductively, we finally arrive at
\begin{eqnarray}\label{eq:bound_H_H_1}
 H_{s,N,\bsgamma,\bsw} (\bsz_{[s]}) &\le& 
 H_{1,N,\bsgamma,\bsw}(z_1) \prod_{j=2}^s (1+ \gamma_j \log 4)+ \sum_{\emptyset\neq \fraku \subseteq \{2:s\} } \gamma_{\fraku} (\log4)^{\abs{\fraku}} 
 (2^m - 2^{\min_{j\in\fraku} w_j})\nonumber\\
 &\le& 
 H_{1,N,\bsgamma,\bsw}(z_1) \prod_{j=2}^s (1+ \gamma_j \log 4)+ \sum_{\emptyset\neq \fraku \subseteq \{2:s\} } 
 \gamma_{\fraku} (\log4)^{\abs{\fraku}} 2^m. 
 \end{eqnarray}
However, recall that we have, by definition,
\begin{eqnarray*}
 H_{1,N,\bsgamma,\bsw}(z_1)&=& H_{1,N,\bsgamma,\bsw}(1)=\sum_{k=1}^{N-1}\gamma_1 u_1 (k)
 =\gamma_1 \sum_{k=1}^{N-1} \log \left(\frac{1}{\sin^2 (\pi k / 2^m)}\right)\\
 &=&-2\gamma_1 \sum_{k=1}^{N-1} \log \left(\sin\left(\frac{\pi k}{N}\right)\right)
 = \gamma_1 (N-m-1) \log 4\\ 
 &\le& \gamma_1 N \log 4 - \gamma_1 \log 4 \le \gamma_1 N \log 4,
\end{eqnarray*} 
where we used the identity 
\[
 \prod_{k=1}^{N-1}\left(2\sin\left(\frac{\pi k}{N}\right)\right) = N.
\]
Consequently, combining this bound with \eqref{eq:bound_H_H_1}, expanding the expression, and using 
that $\gamma_\setu =\prod_{j\in\setu}\gamma_j$, finally gives
\begin{eqnarray*}
 H_{s,N,\bsgamma,\bsw} (\bsz_{[s]}) &\le& 
 N \gamma_1 (\log 4) \left(\prod_{j=2}^s (1+\gamma_j \log 4 )\right) + N \sum_{\emptyset\neq \fraku \subseteq \{2:s\} } \gamma_{\fraku} 
 (\log 4)^{\abs{\fraku}} \\
 &=& N \left(\gamma_1 (\log 4) \prod_{j=2}^s (1+\gamma_j \log 4) + (-1) + \prod_{j=2}^s (1+\gamma_j \log 4)\right)\\
 &=& N\prod_{j=1}^s (1+ \gamma_j \log 4) - N  = N\left[ -1 + \prod_{j=1}^s(1+\gamma_j\log 4) \right],  
\end{eqnarray*}
which is the claim.

\end{proof}
We are now able to show the main result regarding the reduced component-by-component digit-by-digit construction.

\begin{theorem}\label{thm:optcoeff-dbd}
	Let $N=2^m$, with $m \in \N$, let $\bsw=(w_j)_{j\ge 1}$ be a sequence in $\N_0$ with $0=w_1\le w_2\le\cdots$, and let 
	$\bsgamma=(\gamma_j)_{j\ge1}$ be positive product weights. 
	Furthermore, denote by $\bsz = (Y_1z_1, \ldots, Y_dz_d)$ 
	the corresponding generating vector constructed by \RefAlg{alg:redCBCDBD}. Then the following estimate holds,
	\begin{equation} \label{eq:optcoeff-dbd}
		T_\bsgamma(N,\bsz,\bsw)
		\le
		\sum_{\emptyset\neq \fraku\subseteq [d^*] } \gamma_{\fraku} \frac{2 (6\log N)^{\abs{\fraku}+1}}{2^{m-\max_{j\in \fraku} w_j}}.
	\end{equation}
\end{theorem}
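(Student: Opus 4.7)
The plan is to assemble the bound in three moves by chaining together the two main preparatory results, namely \RefThm{thm:T_target_CBCDBD} and \RefThm{theorem:upper_bound_H}, and then simplifying. First I would apply \RefThm{thm:T_target_CBCDBD} to obtain
\[
T_\bsgamma(N,\bsz,\bsw) \le \sum_{\emptyset\neq \fraku\subseteq [d^*]} \frac{\gamma_{\fraku}}{N}\, 2^{w_{j_{\abs{\fraku}}}+1}\,(6\log N)^{\abs{\fraku}}(1+\log N) + \frac{H_{d^*,N,\bsgamma,\bsw}(\bsz_{[d^*]})}{N} - \sum_{\emptyset\neq \fraku\subseteq [d^*]} \gamma_{\fraku}(\log 4)^{\abs{\fraku}},
\]
using the definition $H_{d^*,N,\bsgamma,\bsw} = \sum_{\emptyset\neq \fraku\subseteq [d^*]}\gamma_\fraku H_{N,\fraku,\bsw}$.

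The key observation, and the one bit of clever bookkeeping that makes this work, is that \RefThm{theorem:upper_bound_H} applied with $s=d^*$ gives
\[
\frac{H_{d^*,N,\bsgamma,\bsw}(\bsz_{[d^*]})}{N} \le -1 + \prod_{j=1}^{d^*}(1+\gamma_j\log 4) = \sum_{\emptyset\neq \fraku\subseteq [d^*]} \gamma_\fraku (\log 4)^{\abs{\fraku}},
\]
where the last equality is the standard expansion of the product. This expression is exactly (minus) the third term above, so the two telescope and cancel completely. This is the step I expect to be the most satisfying (and the one that justifies why Algorithm~\ref{alg:redCBCDBD} was designed around minimising $h_{s,v,m,\bsgamma,\bsw}$ rather than a slightly different quantity): the additive "$(\log 4)^{\abs{\fraku}}$" debris that was introduced by the truncation in \RefThm{thm:T_target_CBCDBD} is precisely the penalty one inherits from the product-weight telescoping in \RefThm{theorem:upper_bound_H}.

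What remains after the cancellation is the first summand only, and simplifying it is routine. Since $\bsw$ is non-decreasing and $j_{\abs{\fraku}} = \max\fraku$, we have $w_{j_{\abs{\fraku}}} = \max_{j\in\fraku} w_j$, so $2^{w_{j_{\abs{\fraku}}}+1}/N = 2/2^{m-\max_{j\in\fraku} w_j}$. Furthermore, since $m>3$ (so $N\ge 16$), we have $1+\log N \le 6\log N$, because this is equivalent to $5\log N \ge 1$, which is clear. Folding the extra factor of $6\log N$ into $(6\log N)^{\abs{\fraku}}$ to obtain $(6\log N)^{\abs{\fraku}+1}$ yields exactly the bound~\eqref{eq:optcoeff-dbd}. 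No further estimation of the generating vector itself is needed at this stage, as all the information about the output of the algorithm has already been packaged into \RefThm{theorem:upper_bound_H}.
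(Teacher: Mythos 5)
Your proposal is correct and follows essentially the same route as the paper's own proof: apply Theorem~\ref{thm:T_target_CBCDBD}, substitute the bound from Theorem~\ref{theorem:upper_bound_H} for $H_{d^*,N,\bsgamma,\bsw}$, observe the exact cancellation of the $(\log 4)^{\abs{\fraku}}$ terms via the product-weight expansion $-1+\prod_{j=1}^{d^*}(1+\gamma_j\log 4)=\sum_{\emptyset\neq\fraku\subseteq[d^*]}\gamma_\fraku(\log 4)^{\abs{\fraku}}$, and finish with $w_{j_{\abs{\fraku}}}=\max_{j\in\fraku}w_j$ and $1+\log N\le 6\log N$. Your remark about the cancellation being the structural reason behind the choice of quality function is a fair gloss on what the paper does implicitly.
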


\begin{proof}

Recall from Theorem \ref{thm:T_target_CBCDBD} that 
\begin{eqnarray*}
 T_\bsgamma(N,\bsz,\bsw) &\le& \sum_{\emptyset\neq \fraku\subseteq [d^*] } \frac{\gamma_{\fraku}}{N}
 2^{w_{j_{\abs{\fraku}}}+1}  (6\log N)^{\abs{\fraku}} (1+\log N)\\
 &&+ \frac{1}{N} H_{d^*,N,\bsgamma,\bsw}
 - \sum_{\emptyset\neq \fraku\subseteq [d^*] } \gamma_{\fraku}(\log4)^{\abs{\fraku}}.
\end{eqnarray*}
Then, using the structure of product weights, along with the estimate for $H_{d^*,N,\bsgamma,\bsw}$ in Theorem \ref{theorem:upper_bound_H}, 
we obtain 
\begin{eqnarray*}
 T_\bsgamma(N,\bsz,\bsw) &\le& \frac{2(1+\log N)}{N}
 \sum_{\emptyset\neq \fraku\subseteq [d^*] } \gamma_{\fraku}
 2^{w_{j_{\abs{\fraku}}}}  (6\log N)^{\abs{\fraku}} \\
 &&+ \frac{1}{N} N\left(-1+\prod_{j=1}^{d^*} (1+\gamma_j \log 4)\right) - \prod_{j=1}^{d^*} (1+\gamma_j \log 4) +1\\
&=&\frac{2(1+\log N)}{N} \sum_{\emptyset\neq \fraku\subseteq [d^*] } \gamma_{\fraku}
 2^{w_{j_{\abs{\fraku}}}}  (6\log N)^{\abs{\fraku}} \\
&\le & 
 \sum_{\emptyset\neq \fraku\subseteq [d^*] } \gamma_{\fraku} \frac{2 (6\log N)^{\abs{\fraku}+1}}{2^{m-\max_{j\in \fraku} w_j}},
\end{eqnarray*}
where we used that for a non-empty set $\fraku\subseteq [d^*]$ we have $w_{j_{\abs{\fraku}}}=\max_{j\in\fraku} w_j$. This is the claimed result. 
\end{proof}

We now have the following corollary.
\begin{corollary} \label{cor:main-result-red_dbd}
	Let $N=2^m$, with $m \in \N$, let $\bsw=(w_j)_{j\ge 1}$ be a sequence in $\N_0$ with $0=w_1\le w_2\le\cdots$, 
	and let $\bsgamma=(\gamma_j)_{j\ge1}$ be positive product weights satisfying
	\begin{align*}
		\sum_{j \ge 1} \gamma_j2^{w_j}
		&<
		\infty
		.
	\end{align*}
Furthermore, denote by $\bsz=(Y_1 z_1,\ldots,Y_d z_d)$ the generating vector constructed by \RefAlg{alg:redCBCDBD} run for the weights 
$\bsgamma = (\gamma_j)_{j\ge1}$. Then, for any $\delta>0$ and each $\alpha>1$, the generating vector $\bsz$ satisfies
	\begin{equation*}
		e_{N,d,\alpha,\bsgamma^{\alpha}}(\bsz)
		\le
		\frac{1}{N^\alpha}\left(C_1(\bsgamma^\alpha) + C_2(\bsgamma,\delta)N^{\alpha\delta}\right)
	\end{equation*}  
with weight sequence $\bsgamma^{\alpha}=(\gamma_j^{\alpha})_{j\ge 1}$ and positive constants 
$C_1(\bsgamma^\alpha)$ and $C_2(\bsgamma,\delta)$, which are independent of $d$ and $N$. 
Additionally, if Algorithm \ref{alg:redCBCDBD} is run for weights $\bsgamma^{1/\alpha}=(\gamma_j^{1/\alpha})_{j\ge 1}$ with $\alpha >1$, satisfying 
\begin{align*}
		\sum_{j \ge 1} \gamma_j^{1/\alpha}2^{w_j}
		&<
		\infty,
\end{align*}
then, for any $\delta >0$, the resulting generating vector $\widetilde{\bsz}=(Y_1\widetilde{z}_1,\ldots,Y_d \widetilde{z}_d)$ satisfies the error bound
\begin{equation*}
		e_{N,d,\alpha,\bsgamma}(\widetilde{\bsz})
		\le
		\frac{1}{N^\alpha}\left(F_1(\bsgamma) + F_2(\bsgamma^{1/\alpha},\delta)N^{\alpha\delta}\right),
	\end{equation*}  with positive constants $F_1(\bsgamma)$ and $F_2(\bsgamma^{1/\alpha},\delta)$, which are independent of $d$ and $N$.
\end{corollary}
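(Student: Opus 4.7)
The plan is to combine Proposition \ref{prop:trunc_error} (to reduce to the auxiliary quantity $T_{\alpha,\bsgamma^\alpha}$), the elementary inequality $\sum_i a_i^\alpha \le (\sum_i a_i)^\alpha$, valid for $\alpha\ge 1$ and $a_i\ge 0$ (to reduce further to $T_\bsgamma$, for which Theorem \ref{thm:optcoeff-dbd} gives a bound), and a head-tail split of an infinite product to absorb the remaining logarithmic factors into an $N^{\alpha\delta}$ loss.

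First I would apply Proposition \ref{prop:trunc_error} with space weights $\bsgamma^\alpha$ to write $e_{N,d,\alpha,\bsgamma^\alpha}(\bsz)\le T_{\alpha,\bsgamma^\alpha}(N,\bsz,\bsw)+R$, where the truncation remainder $R$ is controlled by $\sum_{\fraku\neq\emptyset}\gamma_\fraku^\alpha(4\zeta(\alpha))^{|\fraku|}\,2^{-\alpha\max(0,m-\max_{j\in\fraku}w_j)}$. Using $2^{-\alpha\max(0,m-\max w_j)}\le N^{-\alpha}\prod_{j\in\fraku}2^{\alpha w_j}$ and the product structure of the weights, this sum factors as
\[ R\le\frac{1}{N^\alpha}\Bigl(\prod_{j\ge1}\bigl(1+4\zeta(\alpha)\gamma_j^\alpha 2^{\alpha w_j}\bigr)-1\Bigr). \]
The infinite product converges because $\sum_j\gamma_j 2^{w_j}<\infty$ forces $\sum_j(\gamma_j 2^{w_j})^\alpha<\infty$ (for $\alpha>1$ the terms of a summable non-negative sequence eventually lie in $[0,1)$ and so stay summable when raised to the power $\alpha$), hence $R\le C_1(\bsgamma^\alpha)/N^\alpha$.

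For $T_{\alpha,\bsgamma^\alpha}(N,\bsz,\bsw)$ I would observe that $r_{\alpha,\bsgamma^\alpha}^{-1}(\bsell)=[r_{1,\bsgamma}^{-1}(\bsell)]^\alpha$, and since $\delta_N(\cdot)\in\{0,1\}$ the elementary inequality above yields $T_{\alpha,\bsgamma^\alpha}(N,\bsz,\bsw)\le[T_\bsgamma(N,\bsz,\bsw)]^\alpha$. Theorem \ref{thm:optcoeff-dbd}, together with $2^{\max_{j\in\fraku}w_j}\le\prod_{j\in\fraku}2^{w_j}$ and the product-weight factorisation, then gives
\[ T_\bsgamma(N,\bsz,\bsw)\le\frac{12\log N}{N}\Bigl(\prod_{j=1}^{d^*}\bigl(1+6\gamma_j 2^{w_j}\log N\bigr)-1\Bigr). \]

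The delicate step is taming this product, whose naive bound $N^{6\sum_j\gamma_j 2^{w_j}}$ does not suffice for arbitrarily small $\delta>0$. I would use a head-tail split tuned to $\delta$: given $\delta>0$ choose $k^\ast=k^\ast(\bsgamma,\bsw,\delta)$ so that $\sum_{j>k^\ast}\gamma_j 2^{w_j}\le \delta/24$. The tail then contributes at most $\exp\bigl(6\log N\sum_{j>k^\ast}\gamma_j 2^{w_j}\bigr)\le N^{\delta/4}$, while the head, being a polynomial in $\log N$ of fixed degree $k^\ast$, is bounded by $C(\bsgamma,\bsw,\delta)\,N^{\delta/4}$ via the standard estimate $\log N\le C_\epsilon N^\epsilon$ valid for any $\epsilon>0$. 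Absorbing the outer $\log N$ factor in the same fashion, I obtain $T_\bsgamma(N,\bsz,\bsw)\le C(\bsgamma,\delta)\,N^{-1+\delta}$, and raising to the $\alpha$-th power yields $T_{\alpha,\bsgamma^\alpha}(N,\bsz,\bsw)\le C_2(\bsgamma,\delta)\,N^{-\alpha+\alpha\delta}$, which combined with the bound on $R$ proves the first inequality. The second inequality follows by applying the same scheme to $\widetilde\bsz$ constructed from the input weights $\bsgamma^{1/\alpha}$: the reduction step reads $T_{\alpha,\bsgamma}(\widetilde\bsz)\le[T_{\bsgamma^{1/\alpha}}(\widetilde\bsz)]^\alpha$ because $r_{\alpha,\bsgamma}^{-1}(\bsell)=[r_{1,\bsgamma^{1/\alpha}}^{-1}(\bsell)]^\alpha$, and the hypothesis $\sum_j\gamma_j^{1/\alpha}2^{w_j}<\infty$ (which in turn implies $\sum_j\gamma_j 2^{\alpha w_j}<\infty$ needed for the truncation of the space with weights $\bsgamma$) plays the role previously played by $\sum_j\gamma_j 2^{w_j}<\infty$.
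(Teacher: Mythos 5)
Your proposal is correct and follows essentially the same route as the paper: Proposition~\ref{prop:trunc_error} for the truncation remainder, the Jensen-type inequality to pass from $T_{\alpha,\bsgamma^\alpha}$ to $(T_{\bsgamma})^\alpha$, and the bound of Theorem~\ref{thm:optcoeff-dbd} combined with $2^{\max_{j\in\fraku}w_j}\le\prod_{j\in\fraku}2^{w_j}$ and the product-weight factorisation. The only difference is that where the paper cites \cite[Lemma 3]{HN03} to bound $\prod_j\bigl(1+\gamma_j(6\log N)2^{w_j}\bigr)$ by $\hat{C}(\bsgamma,\delta)N^{\delta}$, you prove that bound directly via a head--tail split tuned to $\delta$ --- a valid, self-contained substitute for the cited lemma.
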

\begin{proof}
We know from Proposition~\ref{prop:trunc_error} that the worst-case error $e_{N,d,\alpha,\bsgamma^\alpha}(\bsz)$ satisfies
	\begin{align*}
		e_{N,d,\alpha,\bsgamma^\alpha}(\bsz) 
	    \le
	\sum_{\emptyset\neq \setu \subseteq [d]} \gamma_\setu^\alpha \, \frac{(4\zeta (\alpha))^{\abs{\setu}}}{2^{\alpha\max(0,m-\max_{j\in\setu}w_j)}}
	+ T_{\alpha,\bsgamma^\alpha}(N,\bsz,\bsw).
	\end{align*} al
Taking into account that we have product weights $\gamma_\fraku = \prod_{j\in\fraku} \gamma_j$, this yields
    \begin{align*}
	  	e_{N,d,\alpha,\bsgamma^\alpha}(\bsz) &\le \frac{1}{N^{\alpha}} 
	  	\sum_{\emptyset\neq \setu \subseteq [d]} \left(\prod_{j\in\fraku} (\gamma_j^\alpha \, 
	  	4\zeta (\alpha))\right)2^{\alpha\max_{j\in\setu}w_j} + T_{\alpha,\bsgamma^\alpha}(N,\bsz,\bsw)\\
	  	&\le \frac{1}{N^{\alpha}}  \prod_{j=1}^d\left(1+\gamma_j^\alpha \, 4\zeta (\alpha)2^{\alpha w_j}\right) + T_{\alpha,\bsgamma^\alpha}(N,\bsz,\bsw),
	\end{align*}
where we used that for $\emptyset \neq \fraku \subseteq [d]$ we have $\max_{j\in\setu}w_j \leq \sum_{j\in\setu}w_j$. 
Since $\alpha >1$, we can use an inequality sometimes called Jensen's inequality, which states that 
$\sum_{i=1}^n a_i \leq \left(\sum_{i = 1}^n a_i^p\right)^{1/p}$ for non-negative $a_1, \dots, a_n$ and $0\le p \le 1$, 
and thus we have 
\[
T_{\alpha,\bsgamma^\alpha}(N,\bsz,\bsw) = \sum_{\bszero\neq\bsell \in M_{N,d,\bsw}}\frac{\delta_N(\bsell\cdot\bsz)}{r_{\alpha,\bsgamma^\alpha}(\bsell)} 
\le \left(\sum_{\bszero\neq\bsell \in M_{N,d,\bsw}}\frac{\delta_N(\bsell\cdot\bsz)}{r_{1,\bsgamma}(\bsell)}\right)^\alpha =(T_\bsgamma(N,\bsz,\bsw))^\alpha,
\] 
and by Theorem \ref{thm:optcoeff-dbd} we know that the $\bsz$ generated by Algorithm \ref{alg:redCBCDBD} is such that
	\begin{equation*}
	    T_\bsgamma(N,\bsz,\bsw) \le \sum_{\emptyset\neq \fraku\subseteq [d^*] } \gamma_{\fraku} \frac{2 (6\log N)^{\abs{\fraku}+1}}{2^{m-\max_{j\in \fraku} w_j}}.
	\end{equation*}
From this, we deduce, using $\gamma_\fraku = \prod_{j\in\fraku} \gamma_j$, that we have 
	\begin{eqnarray}\label{eq:6logN}
	    2^mT_\bsgamma(N,\bsz,\bsw) &\le& 2(6\log N)\sum_{\emptyset\neq \fraku\subseteq [d^*] } \gamma_{\fraku} (6\log N)^{\abs{\fraku}}2^{\max_{j\in \fraku} w_j}\nonumber\\
	    &\le& 2(6\log N) \sum_{\emptyset\neq \fraku\subseteq [d^*] } 
	    \left(\prod_{j\in\fraku} (\gamma_j\, 6\log N)\right)\, 2^{\sum_{j\in\fraku}w_j}\nonumber\\
	    &\le& 2(6\log N) \prod_{j=1}^{d^*}\left(1 + \gamma_j\,(6\log N)\, 2^{w_j} \right)\nonumber\\ 
	    &\le& (1+2(6\log N)) \prod_{j=1}^{d^*}\left(1 + \gamma_j\,(6\log N)\, 2^{w_j} \right)\nonumber\\
	    &\le& \widetilde{C}(\delta/2)2^{m\delta/2}\prod_{j=1}^{\infty}\left(1 + \gamma_j\,(6\log N)\, 2^{w_j} \right),
	\end{eqnarray}
for arbitrary $\delta> 0$, where $\widetilde{C}(\delta/2)$ is a constant depending only on $\delta$, 
and where we again used for $\emptyset \neq \fraku \subseteq [d^*]$ that we have 
$\max_{j\in\setu}w_j \leq \sum_{j\in\setu}w_j$. Due to the given condition imposed on the weights, i.e., 
$\sum_{j\ge1}\gamma_j2^{w_j}<\infty$, we can use the result in \cite[Lemma 3]{HN03} to see that  the product in \eqref{eq:6logN} can be bounded by 
$\hat{C}(\bsgamma,\delta)2^{m\delta}$, where $\hat{C}(\bsgamma,\delta)$ may depend on the weights 
$\bsgamma$ and $\delta$, but is independent of the dimension. This yields
	\begin{equation*}
	    (T_\bsgamma(N,\bsz,\bsw))^\alpha \le \frac{1}{N^\alpha}\left( \widetilde{C}(\delta/2) \right)^\alpha\left(\hat{C}(\bsgamma,\delta)\right)^\alpha N^{\alpha\delta} .
	\end{equation*}
Setting then $C_1(\bsgamma^\alpha)= \prod_{j=1}^\infty(1+\gamma_j^{\alpha} 4\zeta(\alpha)2^{\alpha w_j})$, which is finite due to our 
assumption on the weights, and $C_2(\bsgamma,\delta) = \left(\widetilde{C}(\delta/2)\right)^\alpha\left(\hat{C}(\bsgamma,\delta) \right)^\alpha$ 
we get the claimed estimate.

Similarly, for weights $\bsgamma^{1/\alpha}$ and due to the condition imposed on the weights, i.e., 
$\sum_{j\geq 1}\gamma_j^{1/\alpha}2^{w_j} < \infty$, and by using the result in \cite[Lemma 3]{HN03} we have
\begin{equation*}
    (T_{\bsgamma^{1/\alpha}}(N,\widetilde{\bsz},\bsw))^\alpha \le 
    \frac{1}{N^\alpha}\left( \widetilde{C}(\delta/2) \right)^\alpha\left(\hat{C}(\bsgamma^{1/\alpha},\delta)\right)^\alpha N^{\alpha\delta}.
\end{equation*}
Then setting $F_1(\bsgamma)= \prod_{j=1}^\infty(1+\gamma_j 4\zeta(\alpha)2^{w_j})$ and 
$F_2(\bsgamma^{1/\alpha},\delta) = \left(\widetilde{C}(\delta/2)\right)^\alpha\left(\hat{C}(\bsgamma^{1/\alpha},\delta) \right)^\alpha$ we get the claimed estimate.
\end{proof}	
The result in Corollary \ref{cor:main-result-red_dbd} involves two cases regarding the worst-case error 
behavior of the generating vectors constructed by Algorithm \ref{alg:redCBCDBD}. 
To be more precise, we can run the algorithm with weights $\bsgamma$, and hence it does not depend on the parameter $\alpha$, 
then the algorithm produces a generating vector for which bounds on the worst-case errors in the spaces $E_{d,\bsgamma^\alpha}^\alpha$ 
hold simultaneously for all $\alpha >1$. On the other hand, when we run Algorithm \ref{alg:redCBCDBD} with weights $\gamma^{1/\alpha}$, 
we have a dependence on the parameter $\alpha$, and the algorithm yields error bounds for the worst-case error in the spaces $E_{d,\bsgamma}^\alpha$.

\section{Fast implementation of the construction scheme} \label{sec:fast_impl}

In this section we discuss the efficient implementation of the introduced reduced CBC-DBD algorithm and analyze its complexity.

\subsection{Implementation and cost analysis of the reduced CBC-DBD algorithm}
We recall the definition of the quality function in Definition \ref{def:h_rv}, 
and we see that for product weights $h_{d,v,m,\bsgamma,\bsw}$ can be rewritten as follows.
Let $x \in \N$ be an odd integer, let $m, d \in \N$, and let $\bsgamma = (\gamma_j)_{j\ge 1} $ be positive product weights. 
For $1 \le s \le d$, $1 \le v \le m-w_s$, and odd integers $z_1,\ldots,z_{s-1}$ the quality function $h_{s,v,m,\bsgamma,\bsw}$ reads

\begin{align*}\label{eq:fast-eval-h_sw}
    \!\!\!\!\!\!h_{s,v,m,\bsgamma,\bsw} (x):&= \sum_{t=v}^{m-w_s} \frac{1}{2^{t-v}} 
 \sum_{\ell=0}^{2^{w_s}-1} \ \ \sum_{\substack{k=\ell 2^t + 1\\ k\equiv 1 \tmod{2}}}^{(\ell +1) 2^t-1}\left(1+ 
 \gamma_s\log\left(\frac{1}{\sin^2(\pi kx/2^v)} \right) \right) \\ 
 &\times \prod_{j=1}^{s-1}\left[1 + \gamma_j\log \left(\frac{1}{\sin^2 \left(\pi z_j \frac{k}{2^t}\,\frac{2^{m-w_s}}{2^{m-w_j}}\right)}\right) \right],
\end{align*}
where the components $z_1,\dots,z_{s-1}$ have been determined in the previous steps of the algorithm. We are interested in the cost of a single evaluation of the function $h_{s,v,m,\bsgamma,\bsw}$, which is crucial for the total cost of Algorithm 1, hence we will discuss an efficient evaluation procedure in the following paragraph.

For integers $t \in \{2,\ldots,m-w_s\}$ (note that we always have $v\ge 2$ in Algorithm \ref{alg:redCBCDBD}, so we do not need to 
consider the case $t=1$) and odd $k \in \{\ell2^t +1,\ldots,(\ell+1)2^t - 1\}$ for $\ell\in\{0,\ldots,2^{w_s}-1\}$, we define $r(s, t, k)$
as
\begin{equation*}
	r(s,t,k)
	:=
	\prod_{j=1}^s \left(1+ \gamma_j \log\left(\frac{1}{\sin^2\left(\pi z_j\frac{k}{2^t}\frac{2^{m-w_s}}{2^{m-w_j}}\right)} \right) \right),
\end{equation*}
and observe that for the evaluation of $h_{s,v,m,\bsgamma,\bsw}(x)$ we can compute and store the terms $r(s-1,t,k)$ 
for suitable values of $t$ since they are independent of $v$ and $x$. This way we can rewrite $h_{s,v,m,\bsgamma,\bsw}(x)$ as
\begin{equation} \label{eq:fast-eval-h_sw}
	\sum_{t=v}^{m-w_s} \frac{1}{2^{t-v}} \sum_{\ell=0}^{2^{w_s}-1} \ \ \sum_{\substack{k=\ell 2^t + 1\\ k\equiv 1 \tmod{2}}}^{(\ell +1) 2^t-1}
	r(s-1,t-w_{s-1}+w_{s},k) \left( 1 + \gamma_s\log\left(\frac{1}{\sin^2\left(\pi kx/2^v\right)} \right) \right)
	.
\end{equation}
Note that if $v\le t\le m-w_s$, then $0\le v-w_{s-1}+w_s \le t - w_{s-1} + w_s \le m- w_{s-1}$.
In Algorithm \ref{alg:redCBCDBD}, after having determined $z_{s}$, the values of $r(s,v,k)$ for odd integers 
${k\in\{\ell2^v+1,\ldots,(\ell+1)2^v-1\}}$ with $\ell\in\{0,\ldots,2^{w_s}-1\}$ are computed via the recurrence relation
\begin{equation*}
	r(s,t,k) 
	=
	r(s-1,t-w_{s-1}+w_{s},k) \left( 1 + \gamma_s\log\left(\frac{1}{\sin^2(\pi kz_{s}/2^t)} \right) \right)
	.
\end{equation*}

Now, for an algorithmic implementation, we introduce the vector $\bsu=(u(1),\ldots,u(2^{m}-1)) \in \R^{2^{m}-1}$ , 
whose components, for the current $s \in \{1,\ldots,d^*\}$, are given by 
\begin{equation*}
	u(k \, 2^{m-w_s-t}) 
	= 
	\prod_{j=1}^s \left( 1 + \gamma_j \log\left(\frac{1}{\sin^2\left(\pi z_j \frac{k}{2^t}\frac{2^{m-w_s}}{2^{m-w_j}}\right)}\right) \right)
\end{equation*}
for each $t\in\{1,\ldots,m-w_s\}$ and corresponding odd index $k\in\{\ell2^t+1,\ldots,(\ell+1)2^t - 1\}$ for $\ell\in\{0,\ldots,2^{w_s}-1\}$. 
Note that $k \, 2^{m-w_s-t}$ runs through the whole range $\{1,\ldots,2^m-1\}$ when $t$, $k$, and $\ell$ are chosen as stated.
Furthermore, note that the quantity $u(k \, 2^{m-w_s-t}) $ corresponds to $r(s,t,k)$ for $t\ge 2$, and that
for the evaluation of $h_{s,v,m,\bsgamma,\bsw}$ we do not 
require the values of $r(s,t,k)$ for $t=2,\ldots,v-1$. Combining these findings leads to the following fast implementation of Algorithm \ref{alg:redCBCDBD}.

\begin{algorithm}[H]
	\caption{Fast reduced component-by-component digit-by-digit algorithm}
	\label{alg:fast-redcbcdbd}
	\vspace{5pt}
	\textbf{Input:} Integers $m, d \in \N$, positive product weights $\bsgamma=(\gamma_j)_{j\ge 1}$, 
	and integer reduction indices $(w_j)_{j\ge 1}$ with $0=w_1\le w_2\le \cdots$. \\
	\vspace{-10pt}
	\begin{algorithmic}
		\FOR{$t=2$ \TO $m =m-w_1$}
		\FOR{$k =1$ \TO $2^t -1$\textbf{ in steps of} $2$}
		\STATE $u(k2^{m-w_1-t}) =  \left( 1+\gamma_1\log\left(\frac{1}{\sin^2(\pi k/2^t)} \right)\right)$
		\ENDFOR
		\ENDFOR
		\vspace{5pt}
		\STATE Set $s =1$ and let $d^*$ be the largest integer such that $w_{d^*} < m$. \\ Set $z_{1,1} = \cdots = z_{d,1} = 1$.\\
		\vspace{5pt}
		\WHILE{$s\leq\min\{d,d^*\}$}
		\vspace{3pt}
		\FOR{$v=2$ \TO $m-w_s$}
		\STATE $z^{\ast} = \underset{z \in \{0,1\}}{\argmin} \;  h_{s,v,m,\bsgamma,\bsw}(z_{s,v-1} + z \,2^{v-1})$ 
		with $h_{s,v,m,\bsgamma,\bsw}$ evaluated using  \eqref{eq:fast-eval-h_sw}
		\STATE $z_{s,v} = z_{s,v-1} + z^{\ast} 2^{v-1}$
		\FOR{$\ell = 0$ \TO $2^{w_s}-1$}
		\FOR{$k = \ell 2^v +1$ \TO $(\ell+1)2^v -1$\textbf{ in steps of} $2$}
		\STATE $u(k2^{m-w_s-v}) = u(k2^{m-w_{s-1}-v}) \left( 1 + \gamma_s \log\left(\frac{1}{\sin^2(\pi k z_{s,v}/2^v)} \right) \right)$
		\ENDFOR
		\ENDFOR
		\ENDFOR
		\STATE Set $z_s := z_{s,m-w_s}$.
		\ENDWHILE
		\IF{$d>d^*$}
		\STATE set $z_{d^*+1} = \cdots = z_{d} =1$
		\ENDIF
		\vspace{5pt}
		\STATE Set $\bsz = (Y_1z_1,\ldots,Y_dz_d)$.
	\end{algorithmic}
	\vspace{5pt}
	\textbf{Return:} Generating vector $\bsz = (Y_1z_1,\ldots,Y_dz_d)$ for $N=2^m$.
\end{algorithm} The computational complexity of Algorithm \ref{alg:fast-redcbcdbd} is then summarized in the following theorem.

\begin{theorem}\label{thm:cost-redcbcdbd}
    Let $N = 2^m$ with $m \in \N$, let $\bsgamma = (\gamma_j)_{j\ge 1}$ be a given sequence of positive weights, 
    and let integer reduction indices $w_j$ with $0 = w_1 \le w_2 \le \cdots $ be given. Moreover, denote by $d^*$ the largest integer such that
    $w_{d^*} < m$. Then Algorithm \ref{alg:fast-redcbcdbd} constructs a generating  
	vector $\bsz=(z_1,\ldots,z_d)$ using 
	$$
	\calO\left(\min\{d,d^*\}\, 2^m+ \sum_{s=1}^{\min\{d,d^*\}}(m-w_s) 2^{m}\right)
	$$ 
	operations and requiring $\calO(2^m)$ memory.

\end{theorem}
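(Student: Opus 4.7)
The plan is to carefully tally the operations required by each loop in Algorithm \ref{alg:fast-redcbcdbd}, using the fact that all the relevant index sets are geometric and sum cleanly to powers of two. I would separate the analysis into three contributions: the initial precomputation of the vector $\bsu$ for $s=1$, the cost of evaluating the quality function $h_{s,v,m,\bsgamma,\bsw}$ twice (for $z\in\{0,1\}$) at every inner step, and the cost of updating $\bsu$ after each bit selection. Memory is immediate: the only non-constant storage is the vector $\bsu\in\R^{2^m-1}$ itself, giving $\mathcal{O}(2^m)$.

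First I would handle the initial precomputation. For each $t\in\{2,\ldots,m\}$, the odd values of $k$ in $\{1,\ldots,2^t-1\}$ number $2^{t-1}$, so the total number of entries written is $\sum_{t=2}^{m} 2^{t-1} = 2^m - 2$. Each entry requires $\mathcal{O}(1)$ operations (one sine, one logarithm, one product), so this phase costs $\mathcal{O}(2^m)$, which is absorbed into the $\min\{d,d^*\}\,2^m$ term.

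Next, I would count the update cost of $\bsu$ after choosing the $v$th bit of $z_s$. For fixed $s$ and $v$, the inner double loop touches precisely $2^{w_s}\cdot 2^{v-1} = 2^{w_s+v-1}$ indices of $\bsu$. Summing over $v=2,\ldots,m-w_s$ yields
\begin{equation*}
  \sum_{v=2}^{m-w_s} 2^{w_s+v-1} \;=\; 2^{w_s-1}\bigl(2^{m-w_s+1}-4\bigr) \;\le\; 2^m,
\end{equation*}
and summing over $s=1,\ldots,\min\{d,d^*\}$ gives the contribution $\mathcal{O}(\min\{d,d^*\}\,2^m)$.

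The main step, and where I expect the bookkeeping to be most delicate, is analyzing the cost of evaluating $h_{s,v,m,\bsgamma,\bsw}(x)$ via the expression in \eqref{eq:fast-eval-h_sw}. Since the factors $r(s-1,t-w_{s-1}+w_s,k)$ are already stored in $\bsu$ from the previous outer iteration, each term in the triple sum defining $h_{s,v,m,\bsgamma,\bsw}(x)$ costs $\mathcal{O}(1)$. The number of such terms is
\begin{equation*}
  \sum_{t=v}^{m-w_s} 2^{w_s}\cdot 2^{t-1} \;=\; 2^m - 2^{w_s+v-1} \;\le\; 2^m,
\end{equation*}
so a single evaluation is $\mathcal{O}(2^m)$. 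Since for each $s$ we perform $2(m-w_s-1)$ such evaluations (two per inner iteration over $v$), their total cost sums to $\mathcal{O}\bigl(\sum_{s=1}^{\min\{d,d^*\}}(m-w_s)\,2^m\bigr)$. Combining the three contributions yields the claimed operation count, and remarking that $\bsu$ is overwritten in place, never duplicated, confirms the $\mathcal{O}(2^m)$ memory bound. The only subtle point to verify explicitly is that the index shift $t\mapsto t-w_{s-1}+w_s$ in the lookup $r(s-1,\cdot,k)$ indeed falls within the range $\{0,\ldots,m-w_{s-1}\}$ of stored values, which follows from $0\le v-w_{s-1}+w_s\le t-w_{s-1}+w_s\le m-w_{s-1}$ whenever $v\le t\le m-w_s$.
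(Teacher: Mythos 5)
Your proposal is correct and follows essentially the same route as the paper: it bounds a single evaluation of $h_{s,v,m,\bsgamma,\bsw}$ via the stored values of $\bsu$ by $\calO\bigl(\sum_{t=v}^{m-w_s}2^{w_s+t-1}\bigr)=\calO(2^m)$, sums the $\calO(2^{w_s+v-1})$ update costs geometrically to $\calO(2^m)$ per component, and accounts for initialization and the $\calO(2^m)$ storage of $\bsu$ exactly as in the paper's argument. The only difference is cosmetic: you carry the exact counts ($2^m-2^{w_s+v-1}$, etc.) a bit further before passing to $\calO$-notation, which changes nothing in the conclusion.
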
   

\begin{proof} 
	Due to the relation in \eqref{eq:fast-eval-h_sw}, the cost of evaluating $h_{s,v,m,\bsgamma,\bsw}(x)$ for one fixed $v$ can be reduced to 
	$\calO(\sum_{t=v}^{m-w_s} 2^{w_s +  t-1})$ operations. Moreover, updating the values of $u$ needs $\calO (2^{w_s + v-1})$ 
	operations, so the computational cost for one fixed $v$ in the inner loop over $v = 2,\dots,m-w_s$ of Algorithm \ref{alg:fast-redcbcdbd} is 
	of order $\calO(\sum_{t=v}^{m-w_s} 2^{w_s +  t-1})$.
	Thus, the number of calculations in the inner loop over $v = 2,\dots,m-w_s$ of Algorithm \ref{alg:fast-redcbcdbd} is of order
	\begin{align*}
	\calO\left(\sum_{v=2}^{m-w_s} 2 \sum_{t=v}^{m-w_s} 2^{w_s + t-1}\right) 
	&=
	\calO\left(\sum_{v=2}^{m-w_s} \sum_{t=v}^{m-w_s} 2^{w_s + t} \right)
	=
	\calO\left((m-w_s) \, 2^{m+1} - 2(2^{m+1} - 2^{w_s +1}) \right)\\
	&=
	\calO\left((m-w_s) \, 2^{m} \right)
	.
	\end{align*}
	Hence, the outer loop over $s=1,\ldots,\min \{d,d^*\}$, which is the main cost of Algorithm \ref{alg:fast-redcbcdbd}, can be executed in 
	$$
	\calO\left(\sum_{s=1}^{\min\{d,d^*\}}(m-w_s) 2^{m} \right)
	$$ 
	operations.	Furthermore, we observe that initialization and updating of the vector $\bsu \in \R^{2^m-1}$ can both be executed in 
	$\calO(\min\{d,d^*\}2^m)$ operations. 
	Additionally, storing the vector $\bsu$ requires $\calO(2^m)$ of memory.
\end{proof}

The run-time of Algorithm \ref{alg:fast-redcbcdbd} can also be reduced further by precomputing and storing the $2^m$ values 
\begin{equation*}
\log\left(\frac{1}{\sin^2(\pi k/2^m)}  \right)
\quad \text{for} \quad
k=1,\ldots,2^m-1.
\end{equation*}

The derivation leading to the fast implementation in Algorithm \ref{alg:fast-redcbcdbd} is using arguments that were also used in \cite{EKNO21}, where
a non-reduced component-by-component digit-by-digit construction for lattice rules in weighted Korobov spaces has been studied.

\section{Numerical results} \label{sec:num}
In this section, we illustrate the error convergence behavior of the lattice rules constructed by the reduced fast CBC-DBD algorithm and display the computational complexity of the construction using numerical experiments. As in the previous section, we consider lattice rules in the weighted Korobov space $E_{s,\bsgamma}^\alpha$ of smoothness $\alpha >1$, and we assume product weights $\gamma_\setu = \prod_{j \in \setu} \gamma_j$ given in terms of positive reals $(\gamma_j)_{j \ge 1}$. For $\bsz = (z_1,\dots,z_d)$, the worst-case error is then given by 
\begin{equation*}
	e_{N,d,\alpha,\bsgamma}(\bsz)
	=
	-1 + \frac{1}{2^m} \sum_{k=0}^{2^m-1} \prod_{j=1}^d \left(1 + \gamma_j \,\sum_{\ell \in \Z_*}\frac{e^{2\pi \icomp k\ell z_j/2^m}}{\abs{\ell}^\alpha} \right).
\end{equation*}
To demonstrate the performance of the algorithm, we compare the worst-case errors of the constructed lattice rules as well as the algorithm's computation times with the corresponding quantities obtained by the non-reduced component-by-component digit-by-digit algorithm, see \cite{EKNO21}. Both constructions deliver lattice rules for the space $E_{s,\bsgamma}^\alpha$ consisting of $2^m$ cubature points. The different algorithms have been implemented in double-precision and arbitrary-precision floating-point arithmetic, with the latter provided by the multi-precision Python library mpmath.

\subsection{Error convergence behavior}
Let $m,d \in \N$, $\alpha > 1$, a sequence of positive weights $\bsgamma = (\gamma_j)_{j \ge 1}$, and reduction indices $(w_j)_{j\geq 1}$ 
with $0=w_1\le w_2 \le \cdots$ be given. 
In particular, we consider the convergence rate of the worst-case error $e_{2^m,d,\alpha,\bsgamma^\alpha}(\bsz)$ for different weight sequences $\bsgamma = (\gamma_j)_{j\geq1}$ of the form $\gamma_j = c^j$ with $c \in (0,1)$ or $\gamma_j = 1/j^q$ with $q >1$ and reduction indices of the form $w_j = \lfloor p\log_2 j\rfloor$ with $p>0$. We display the computational results for dimension $d=100$ for different sequences of product weights, different values of $m$, and reduction indices $w_j$. We stress that the almost optimal error rates of $\calO(N^{-\alpha+\delta})$, as guaranteed by Corollary \ref{cor:main-result-red_dbd}, may not always be visible for the weights, reduction indices, and ranges of $N$ considered in our numerical experiments. The graphs shown are therefore to be understood as illustrations of the pre-asymptotic behavior of the worst-case error.
Figures \ref{fig:red_cbc_dbd_1} and \ref{fig:red_cbc_dbd_2} show numerical results using different choices of weights $(\gamma_j)_{j\ge 1}$ 
for reduction indices of the form $w_j = \lfloor 2 \log_2 j\rfloor$ and $w_j=\lfloor \frac{7}{2}\log_2 j\rfloor$, respectively. The generating vectors $\bsz$ are obtained by the reduced fast CBC-DBD algorithm and the non-reduced fast CBC-DBD algorithm, respectively. 

\begin{figure}
	\centering
	\textbf{Error convergence in the space $E_{d,\bsgamma}^{\alpha}$ with $d=100, \alpha=2$, $w_j = \lfloor2\log_2 j\rfloor$.} \par\medskip 
	\hspace{-0.25cm}
	\centering
	\begin{subfigure}[b]{0.5\textwidth}
		\centering
		\begin{tikzpicture}
		\begin{axis}[%
width=0.811\textwidth,
height=0.8\textwidth,
at={(0\textwidth,0\textwidth)},
scale only axis,
xmode=log,
xmin=10,
xmax=300000,
xminorticks=true,
xlabel style={font=\color{white!15!black}},
xlabel={Number of points $N=2^m$},
ymode=log,
ymin=1e-10,
ymax=0.01,
yminorticks=true,
ylabel style={font=\color{white!15!black}},
ylabel={Worst-case error $e_{N,d,\alpha,\mathbf{\gamma^{\alpha}}}(\mathbf{z})$},
axis background/.style={fill=white},
xmajorgrids,
ymajorgrids,
legend style={at={(0.03,0.03)}, anchor=south west, legend cell align=left, align=left, draw=white!15!black}
]
		\addplot [color=mycolor1-fig,solid, line width=0.6pt, mark=o, mark options={solid}, forget plot]
  table[row sep=crcr]{%
64	0.00136397342871476\\
128	0.00041495206968083\\
256	0.000113777898495654\\
512	3.57186608991275e-05\\
1024	8.96735413484793e-06\\
2048	3.12393880750252e-06\\
4096	5.94557183683682e-07\\
8192	1.73173640260874e-07\\
16384	4.81706937364865e-08\\
32768	1.07434643818022e-08\\
65536	3.40729981226737e-09\\
131072	1.28708617753193e-09\\
};
\addplot [color=mycolor2-fig,solid,line width=0.6pt,mark=triangle,mark options={solid},forget plot]
		table[row sep=crcr]{%
64	0.00338976825721932\\
128	0.000776435190914774\\
256	0.000249289329498733\\
512	8.79701854432796e-05\\
1024	4.56201064529152e-05\\
2048	4.28360533608438e-06\\
4096	2.81375393339083e-06\\
8192	6.77676918354019e-07\\
16384	1.68694186004295e-07\\
32768	2.36560764606151e-08\\
65536	1.08732055074483e-08\\
131072	9.1273289066741e-09\\
};
	\addplot [color=mycolor3-fig,dashed,line width=0.8pt]
		table[row sep=crcr]{%
64	0.00186437254147063\\
128	0.000498446504017713\\
256	0.000133261411998431\\
512	3.5627903465413e-05\\
1024	9.52524430219703e-06\\
2048	2.5466072990968e-06\\
4096	6.80844346881189e-07\\
8192	1.82026111699467e-07\\
16384	4.86653160773161e-08\\
32768	1.30108420533383e-08\\
65536	3.47849401960058e-09\\
131072	9.29987513090472e-10\\
};
\addlegendentry{$\calO (N^{-1.90})$}
		\end{axis}
		\end{tikzpicture}
		\caption{Weight sequence $\bsgamma=(\gamma_j)_{j=1}^d$ with $\gamma_j = 1/j^3$.}    
	\end{subfigure}
	\begin{subfigure}[b]{0.5\textwidth}  
		\centering 
		\begin{tikzpicture}
		\begin{axis}[%
width=0.811\textwidth,
height=0.8\textwidth,
at={(0\textwidth,0\textwidth)},
scale only axis,
xmode=log,
xmin=20,
xmax=200000,
xminorticks=true,
xlabel style={font=\color{white!15!black}},
xlabel={Number of points $N=2^m$},
ymode=log,
ymin=1e-10,
ymax=0.01,
yminorticks=true,
ylabel style={font=\color{white!15!black}},
ylabel={Worst-case error $e_{N,d,\alpha,\mathbf{\gamma^{\alpha}}}(\mathbf{z})$},
axis background/.style={fill=white},
xmajorgrids,
ymajorgrids,
legend style={at={(0.03,0.03)}, anchor=south west, legend cell align=left, align=left, draw=white!15!black}
]
		
		\addplot [color=mycolor1-fig,solid, line width=0.6pt, mark=o, mark options={solid}, forget plot]
  table[row sep=crcr]{%
64	0.000803660679099214\\
128	0.000200971272724395\\
256	5.02498739194294e-05\\
512	1.25700964904643e-05\\
1024	3.14241304450872e-06\\
2048	7.86290904231181e-07\\
4096	1.96387468195797e-07\\
8192	4.9114858240217e-08\\
16384	1.22747998201088e-08\\
32768	3.06929454004206e-09\\
65536	7.68023623822221e-10\\
131072	1.92351367445007e-10\\
};
\addplot [color=mycolor2-fig,solid,line width=0.6pt,mark=triangle,mark options={solid},forget plot]
		table[row sep=crcr]{%
64	0.000804842338171632\\
128	0.000201124614944839\\
256	5.02501348906322e-05\\
512	1.25752463046481e-05\\
1024	3.1589729322081e-06\\
2048	7.86229217616706e-07\\
4096	1.97649721241715e-07\\
8192	4.92347799680876e-08\\
16384	1.23340263398877e-08\\
32768	3.06931547528559e-09\\
65536	7.68771535002009e-10\\
131072	1.93759692741932e-10\\
};
		
		\addplot [color=mycolor3-fig,dashed,line width=0.8pt]
		table[row sep=crcr]{%
64	0.000442663285994398\\
128	0.000110662158896541\\
256	2.76646241039251e-05\\
512	6.9159271285046e-06\\
1024	1.72892455965088e-06\\
2048	4.32216834767362e-07\\
4096	1.08050632523862e-07\\
8192	2.70117641185601e-08\\
16384	6.75271753393575e-09\\
32768	1.68812351140708e-09\\
65536	4.2201691029484e-10\\
131072	1.05500735800045e-10\\
};
\addlegendentry{$\calO(N^{-2.00})$}
		\end{axis}
		\end{tikzpicture}
		\caption{Weight sequence $\bsgamma=(\gamma_j)_{j=1}^d$ with $\gamma_j = 1/j^8$.}
	\end{subfigure}
	\vskip\baselineskip
	\hspace{-0.25cm}
	\centering
	\begin{subfigure}[b]{0.5\textwidth}
		\centering
		\begin{tikzpicture}
		
		\begin{axis}[%
width=0.811\textwidth,
height=0.8\textwidth,
at={(0\textwidth,0\textwidth)},
scale only axis,
xmode=log,
xmin=10,
xmax=500000,
xminorticks=true,
xlabel style={font=\color{white!15!black}},
xlabel={Number of points $N=2^m$},
ymode=log,
ymin=1e-11,
ymax=1e-3,
yminorticks=true,
ylabel style={font=\color{white!15!black}},
ylabel={Worst-case error $e_{N,d,\alpha,\mathbf{\gamma^{\alpha}}}(\mathbf{z})$},
axis background/.style={fill=white},
xmajorgrids,
ymajorgrids,
legend style={at={(0.03,0.03)}, anchor=south west, legend cell align=left, align=left, draw=white!15!black}
]
	
	\addplot [color=mycolor1-fig, line width=0.6pt, mark=o, mark options={solid}, forget plot]
  table[row sep=crcr]{%
64	0.000104240109217907\\
128	2.9195489069013e-05\\
256	7.72906106593819e-06\\
512	2.30081487959806e-06\\
1024	5.79809038743061e-07\\
2048	1.81503073600323e-07\\
4096	3.66692191473153e-08\\
8192	1.03498552821969e-08\\
16384	2.3686872108615e-09\\
32768	6.82853999181353e-10\\
65536	2.41399001025539e-10\\
131072	6.72599684960036e-11\\
};

		\addplot [color=mycolor2-fig,solid,line width=0.6pt,mark=triangle,mark options={solid},forget plot]
		table[row sep=crcr]{%
	64	0.000179207417196795\\
128	4.22472342413148e-05\\
256	1.33937975609161e-05\\
512	2.70075749518944e-06\\
1024	2.21288899060296e-06\\
2048	1.84718767091478e-07\\
4096	1.01959628037341e-07\\
8192	1.79232462636359e-08\\
16384	6.22338458861278e-09\\
32768	7.86747388915077e-10\\
65536	3.5181537062487e-10\\
131072	2.28203067086458e-10\\
};

		\addplot [color=mycolor3-fig,dashed,line width=0.8pt]
		table[row sep=crcr]{%
		64	0.000116484821177917\\
128	2.95793933569213e-05\\
256	7.51119761798928e-06\\
512	1.90734437909919e-06\\
1024	4.84338552319325e-07\\
2048	1.2298976306186e-07\\
4096	3.1231215738613e-08\\
8192	7.93065058610776e-09\\
16384	2.01385752144032e-09\\
32768	5.113858027948e-10\\
65536	1.29857964883754e-10\\
131072	3.29752819722234e-11\\
};
\addlegendentry{$\calO(N^{-1.98})$};
		\end{axis}
		\end{tikzpicture}
		\caption{Weight sequence $\bsgamma=(\gamma_j)_{j=1}^d$ with $\gamma_j = (0.3)^j$.}
	\end{subfigure}
	\begin{subfigure}[b]{0.5\textwidth}  
		\centering 
		\begin{tikzpicture}
		
	\begin{axis}[%
width=0.811\textwidth,
height=0.8\textwidth,
at={(0\textwidth,0\textwidth)},
scale only axis,
xmode=log,
xmin=10,
xmax=850000,
xminorticks=true,
xlabel style={font=\color{white!15!black}},
xlabel={Number of points $N=2^m$},
ymode=log,
ymin=1000,
ymax=10000000,
yminorticks=true,
ylabel style={font=\color{white!15!black}},
ylabel={Worst-case error $e_{N,d,\alpha,\mathbf{\gamma^{\alpha}}}(\mathbf{z})$},
axis background/.style={fill=white},
xmajorgrids,
ymajorgrids,
legend style={at={(0.03,0.03)}, anchor=south west, legend cell align=left, align=left, draw=white!15!black}
]
		\addplot [color=mycolor1-fig, line width=0.6pt, mark=o, mark options={solid}, forget plot]
  table[row sep=crcr]{%
64	4417079.48643413\\
128	2208539.31795966\\
256	1104269.19820593\\
512	552134.113436488\\
1024	276066.703147798\\
2048	138033.261541012\\
4096	69016.3247481713\\
8192	34508.0564800858\\
16384	17253.8180482969\\
32768	8626.7707941766\\
65536	4313.28896327669\\
131072	2156.56910773271\\
};

		\addplot [color=mycolor2-fig,solid,line width=0.6pt,mark=triangle,mark options={solid},forget plot]
		table[row sep=crcr]{%
			64	4442001.0870858\\
128	2787076.95406399\\
256	1281324.27199222\\
512	947953.582658419\\
1024	716646.703824302\\
2048	195917.313724991\\
4096	103624.865472284\\
8192	59354.6025559344\\
16384	22436.2033938627\\
32768	9878.71828265001\\
65536	6681.40887658143\\
131072	4024.26849863592\\
};

		\addplot [color=mycolor3-fig,dashed,line width=0.8pt]
		table[row sep=crcr]{%
		64	2443100.59789719\\
128	1239301.9159449\\
256	628655.750068841\\
512	318895.7000791\\
1024	161764.952468506\\
2048	82057.8635605536\\
4096	41625.165830858\\
8192	21115.0321890582\\
16384	10710.9383337145\\
32768	5433.2950554575\\
65536	2756.12595646615\\
131072	1398.08904364146\\
};
\addlegendentry{$\calO(N^{-0.98})$}


		\end{axis}
		\end{tikzpicture}
		\caption{Weight sequence $\bsgamma=(\gamma_j)_{j=1}^d$ with $\gamma_j = (0.95)^j$.}
	\end{subfigure}
	\vskip\baselineskip
	\begin{tikzpicture}
	\hspace{0.05\linewidth}
	\begin{customlegend}[
	legend columns=5,legend style={align=left,draw=none,column sep=1.5ex},
	legend entries={CBC-DBD, reduced CBC-DBD \quad, $\alpha=2$}
	]
	\addlegendimage{color=mycolor1-fig, mark=o,solid,line width=1.0pt,line legend}
	\addlegendimage{color=mycolor2-fig, mark=triangle,solid,line width=1.0pt}  
	\end{customlegend}
	\end{tikzpicture}
	\caption{Convergence of the worst-case errors $e_{N,d,\alpha,\bsgamma^{\alpha}}(\bsz)$ in the weighted space $E_{d,\bsgamma}^\alpha$ 
	for smoothness parameter $\alpha=2$ with dimension $d=100$ and reduction indices $w_j = \lfloor 2\log_2 j\rfloor$. The generating vector $\bsz$ is constructed via the reduced 
	CBC-DBD construction and the non-reduced CBC-DBD construction for $N=2^m$, respectively.}  
	\label{fig:red_cbc_dbd_1}
\end{figure}
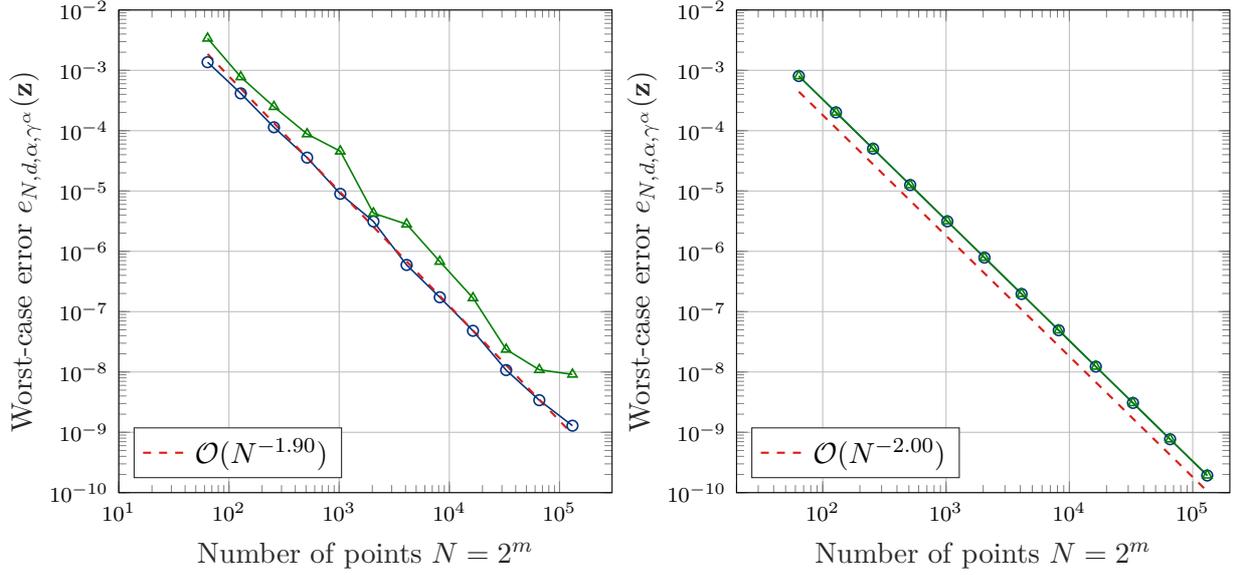
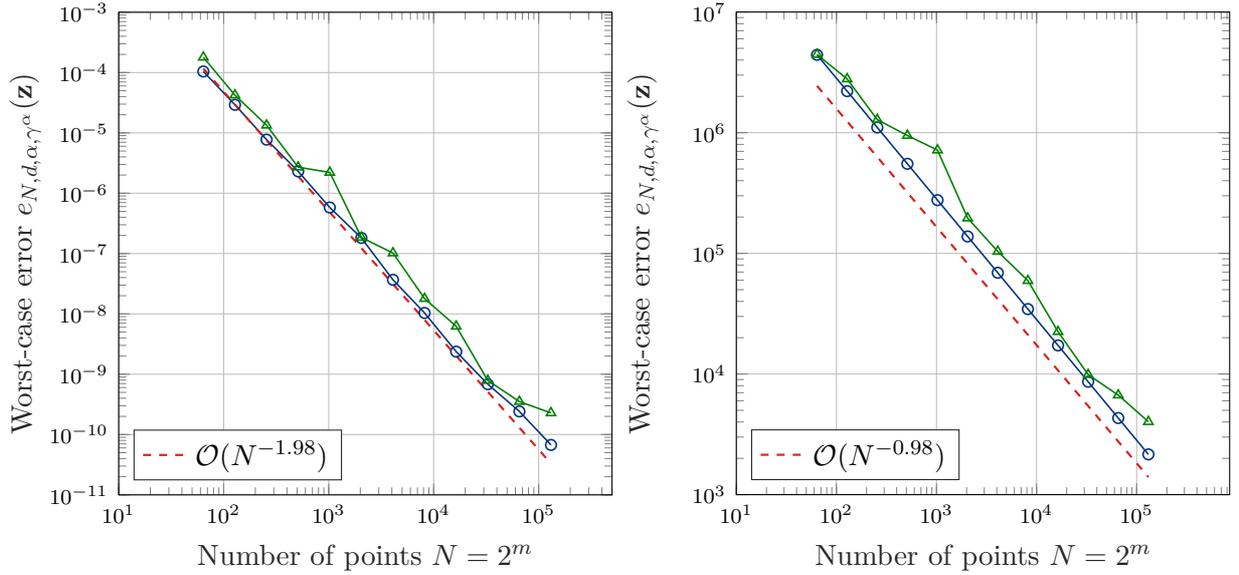

\begin{figure}
	\centering
	\textbf{Error convergence in the space $E_{d,\bsgamma}^{\alpha}$ with $d=100, \alpha=2$, $w_j = \lfloor\frac{7}{2}\log_2 j\rfloor$}. \par\medskip 
	\hspace{-0.25cm}
	\centering
	\begin{subfigure}[b]{0.5\textwidth}
		\centering
		\begin{tikzpicture}
		\begin{axis}[%
		width=0.8\textwidth,
		height=0.8\textwidth,
		at={(0\textwidth,0\textwidth)},
		scale only axis,
		xmode=log,
		xmin=10,
		xmax=200000,
		xminorticks=true,
		xlabel={Number of points $N=2^m$},
		xmajorgrids,
		ymode=log,
		ymin=1e-11,
		ymax=0.1,
		yminorticks=true,
		ylabel={Worst-case error $e_{N,d,\alpha,\mathbf{\gamma^{\alpha}}}(\bsz)$},
		ymajorgrids,
		axis background/.style={fill=white},
		legend style={at={(0.03,0.03)},anchor=south west,legend cell align=left,align=left,draw=white!15!black}
		]
		\addplot [color=mycolor1-fig,solid, line width=0.6pt, mark=o, mark options={solid}, forget plot]
  table[row sep=crcr]{%
64	0.00136397342871476\\
128	0.00041495206968083\\
256	0.000113777898495654\\
512	3.57186608991275e-05\\
1024	8.96735413484793e-06\\
2048	3.12393880750252e-06\\
4096	5.94557183683682e-07\\
8192	1.73173640260874e-07\\
16384	4.81706937364865e-08\\
32768	1.07434643818022e-08\\
65536	3.40729981226737e-09\\
131072	1.28708617753193e-09\\
};
		\addplot [color=mycolor2-fig,solid,line width=0.6pt,mark=triangle,mark options={solid},forget plot]
		table[row sep=crcr]{%
	64	0.00438960243064497\\
128	0.00157546948831783\\
256	0.000823138390056791\\
512	0.000721210426734021\\
1024	0.000260416773260162\\
2048	0.000129161777169\\
4096	4.81379702335332e-05\\
8192	3.43147138966629e-05\\
16384	4.50251027764988e-06\\
32768	1.27098997245945e-06\\
65536	1.03687431640195e-06\\
131072	3.47127881156422e-07\\
};
		\addplot [color=mycolor3-fig,dashed,line width=0.8pt]
		table[row sep=crcr]{%
			64	0.00241428133685473\\
128	0.000976439443559482\\
256	0.000394914201747863\\
512	0.000159720326509579\\
1024	6.45977850060049e-05\\
2048	2.61261288332752e-05\\
4096	1.05665327030863e-05\\
8192	4.2735613101314e-06\\
16384	1.72841241158678e-06\\
32768	6.99044484852697e-07\\
65536	2.8272372295357e-07\\
131072	1.14345660759444e-07\\
};
\addlegendentry{$\calO(N^{-1.31})$};
		
		\end{axis}
		\end{tikzpicture}
		\caption{Weight sequence $\bsgamma=(\gamma_j)_{j\ge 1}$ with $\gamma_j = 1/j^3$.}    
	\end{subfigure}
	\begin{subfigure}[b]{0.5\textwidth}  
		\centering 
		\begin{tikzpicture}
		
		\begin{axis}[%
		width=0.8\textwidth,
		height=0.8\textwidth,
		at={(0\textwidth,0\textwidth)},
		scale only axis,
		xmode=log,
		xmin=10,
		xmax=500000,
		xminorticks=true,
		xlabel={Number of points $N=2^m$},
		xmajorgrids,
		ymode=log,
		ymin=1e-10,
		ymax=0.01,
		yminorticks=true,
		ylabel={Worst-case error $e_{N,d,\alpha,\mathbf{\gamma^{\alpha}}}(\bsz)$},
		ymajorgrids,
		axis background/.style={fill=white},
		legend style={at={(0.03,0.03)},anchor=south west,legend cell align=left,align=left,draw=white!15!black}
		]
		
		\addplot [color=mycolor1-fig,solid, line width=0.6pt, mark=o, mark options={solid}, forget plot]
  table[row sep=crcr]{%
64	0.000803660679099214\\
128	0.000200971272724395\\
256	5.02498739194294e-05\\
512	1.25700964904643e-05\\
1024	3.14241304450872e-06\\
2048	7.86290904231181e-07\\
4096	1.96387468195797e-07\\
8192	4.9114858240217e-08\\
16384	1.22747998201088e-08\\
32768	3.06929454004206e-09\\
65536	7.68023623822221e-10\\
131072	1.92351367445007e-10\\
};
\addplot [color=mycolor2-fig,solid,line width=0.6pt,mark=triangle,mark options={solid},forget plot]
		table[row sep=crcr]{%
	64	0.000804859087734162\\
128	0.000201129469271601\\
256	5.04217904285333e-05\\
512	1.25760193998465e-05\\
1024	3.15927419786685e-06\\
2048	8.04890774905338e-07\\
4096	1.96638108254771e-07\\
8192	4.92394329609272e-08\\
16384	1.23455284932492e-08\\
32768	3.11648815713363e-09\\
65536	7.68827731927967e-10\\
131072	1.9377823595292e-10\\
};		
		\addplot [color=mycolor3-fig,dashed,line width=0.8pt]
		table[row sep=crcr]{%
		64	0.000442672498253789\\
128	0.000110851906553729\\
256	2.77589984357955e-05\\
512	6.9512741649149e-06\\
1024	1.74070446480893e-06\\
2048	4.35898795230852e-07\\
4096	1.09155668595682e-07\\
8192	2.73342347281787e-08\\
16384	6.84490689111794e-09\\
32768	1.71406848642349e-09\\
65536	4.29228742316797e-10\\
131072	1.07485386196722e-10\\
};
\addlegendentry{$\calO(N^{-2.00})$}
		\end{axis}
		\end{tikzpicture}
		\caption{Weight sequence $\bsgamma=(\gamma_j)_{j\ge 1}$ with $\gamma_j = 1/j^8$.}
	\end{subfigure}
	\vskip\baselineskip
	\hspace{-0.25cm}
	\centering
	\begin{subfigure}[b]{0.5\textwidth}
		\centering
		\begin{tikzpicture}
		
		\begin{axis}[%
		width=0.8\textwidth,
		height=0.8\textwidth,
		at={(0\textwidth,0\textwidth)},
		scale only axis,
		xmode=log,
		xmin=10,
		xmax=1000000,
		xminorticks=true,
		xlabel={Number of points $N=2^m$},
		xmajorgrids,
		ymode=log,
		ymin=1e-12,
		ymax=0.01,
		yminorticks=true,
		ylabel={Worst-case error $e_{N,s,\alpha,\mathbf{\gamma^{\alpha}}}(\bsz)$},
		ymajorgrids,
		axis background/.style={fill=white},
		legend style={at={(0.03,0.03)},anchor=south west,legend cell align=left,align=left,draw=white!15!black}
		]

	\addplot [color=mycolor1-fig, line width=0.6pt, mark=o, mark options={solid}, forget plot]
  table[row sep=crcr]{%
64	0.000104240109217907\\
128	2.9195489069013e-05\\
256	7.72906106593819e-06\\
512	2.30081487959806e-06\\
1024	5.79809038743061e-07\\
2048	1.81503073600323e-07\\
4096	3.66692191473153e-08\\
8192	1.03498552821969e-08\\
16384	2.3686872108615e-09\\
32768	6.82853999181353e-10\\
65536	2.41399001025539e-10\\
131072	6.72599684960036e-11\\
};
	
		\addplot [color=mycolor2-fig,solid,line width=0.6pt,mark=triangle,mark options={solid},forget plot]
		table[row sep=crcr]{%
	64	0.00077137433981393\\
128	0.000199434230406031\\
256	7.65663229654456e-05\\
512	2.10826176438758e-05\\
1024	3.3446608337113e-06\\
2048	1.87176435666927e-06\\
4096	6.54761301410325e-07\\
8192	5.48682628528749e-07\\
16384	2.11409663781009e-08\\
32768	1.12341188491756e-08\\
65536	1.04699787283312e-08\\
131072	1.84349979762201e-10\\
};
	
		\addplot [color=mycolor3-fig,dashed,line width=0.8pt]
		table[row sep=crcr]{%
	64	0.000501393320879055\\
128	0.000145467014305223\\
256	4.22036979147961e-05\\
512	1.22443711805763e-05\\
1024	3.55240495537636e-06\\
2048	1.03064345084551e-06\\
4096	2.99016000741448e-07\\
8192	8.67521824604423e-08\\
16384	2.51690248782284e-08\\
32768	7.30217725196412e-09\\
65536	2.11854821063117e-09\\
131072	6.14644970383496e-10\\
};
\addlegendentry{$\calO(N^{-1.79})$}
		\end{axis}
		\end{tikzpicture}
		\caption{Weight sequence $\bsgamma=(\gamma_j)_{j\ge 1}$ with $\gamma_j = (0.3)^j$.}
	\end{subfigure}
	\begin{subfigure}[b]{0.5\textwidth}  
		\centering 
		\begin{tikzpicture}
		
		\begin{axis}[%
		width=0.8\textwidth,
		height=0.8\textwidth,
		at={(0\textwidth,0\textwidth)},
		scale only axis,
		xmode=log,
		xmin=10,
		xmax=1000000,
		xminorticks=true,
		xlabel={Number of points $N=2^m$},
		xmajorgrids,
		ymode=log,
		ymin=1000,
        ymax=10e7,
		yminorticks=true,
		ylabel={Worst-case error $e_{N,d,\alpha,\mathbf{\gamma^{\alpha}}}(\bsz)$},
		ymajorgrids,
		axis background/.style={fill=white},
		legend style={at={(0.03,0.03)},anchor=south west,legend cell align=left,align=left,draw=white!15!black}
		]
		\addplot [color=mycolor1-fig, line width=0.6pt, mark=o, mark options={solid}, forget plot]
  table[row sep=crcr]{%
64	4417079.48643413\\
128	2208539.31795966\\
256	1104269.19820593\\
512	552134.113436488\\
1024	276066.703147798\\
2048	138033.261541012\\
4096	69016.3247481713\\
8192	34508.0564800858\\
16384	17253.8180482969\\
32768	8626.7707941766\\
65536	4313.28896327669\\
131072	2156.56910773271\\
};
		\addplot [color=mycolor2-fig,solid,line width=0.6pt,mark=triangle,mark options={solid},forget plot]
		table[row sep=crcr]{%
	64	4661989.50380705\\
128	2489801.94909981\\
256	2023402.86809754\\
512	845692.60196366\\
1024	900090.312317118\\
2048	794505.468702513\\
4096	559608.308745691\\
8192	151033.87256604\\
16384	103683.082594485\\
32768	25133.0476211002\\
65536	14791.3113042966\\
131072	11758.2069958603\\
};
	
		\addplot [color=mycolor3-fig,dashed,line width=0.8pt]
		table[row sep=crcr]{%
	64	2564094.22709388\\
128	1435151.48071704\\
256	803269.923094327\\
512	449598.929463241\\
1024	251645.420254028\\
2048	140848.683982473\\
4096	78834.5433013182\\
8192	44124.5529727547\\
16384	24696.9931392103\\
32768	13823.1761916051\\
65536	7736.98235032462\\
131072	4330.47333402207\\
};
\addlegendentry{$\calO(N^{-0.84})$}

		\end{axis}
		\end{tikzpicture}
		\caption{Weight sequence $\bsgamma=(\gamma_j)_{j\ge 1}$ with $\gamma_j = (0.95)^j$.}
	\end{subfigure}
	\vskip\baselineskip
	\begin{tikzpicture}
	\hspace{0.05\linewidth}
	\begin{customlegend}[
	legend columns=5,legend style={align=left,draw=none,column sep=1.5ex},
	legend entries={CBC-DBD, reduced CBC-DBD \quad, $\alpha=2$, $\alpha=3$, $\alpha=4$}
	]
	\addlegendimage{color=mycolor1-fig, mark=o,solid,line width=1.0pt,line legend}
	\addlegendimage{color=mycolor2-fig, mark=triangle,solid,line width=1.0pt}  
	\end{customlegend}
	\end{tikzpicture}
	\caption{Convergence of the worst-case errors $e_{N,d,\alpha,\bsgamma^{\alpha}}(\bsz)$ in the weighted space $E_{d,\bsgamma}^\alpha$ 
	for smoothness parameter $\alpha=2$ with dimension $d=100$ and reduction indices $w_j = \lfloor \frac{7}{2}\log_2 j\rfloor$. The generating vector $\bsz$ is constructed via the reduced 
	CBC-DBD construction and the non-reduced CBC-DBD construction for $N=2^m$, respectively.}  
	\label{fig:red_cbc_dbd_2}
\end{figure}
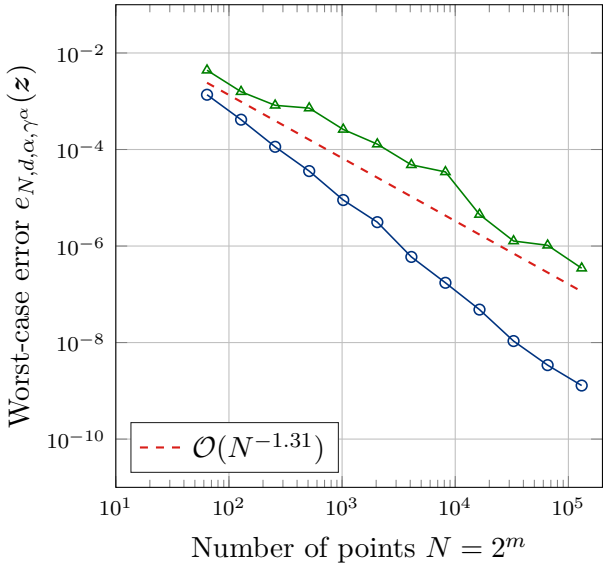
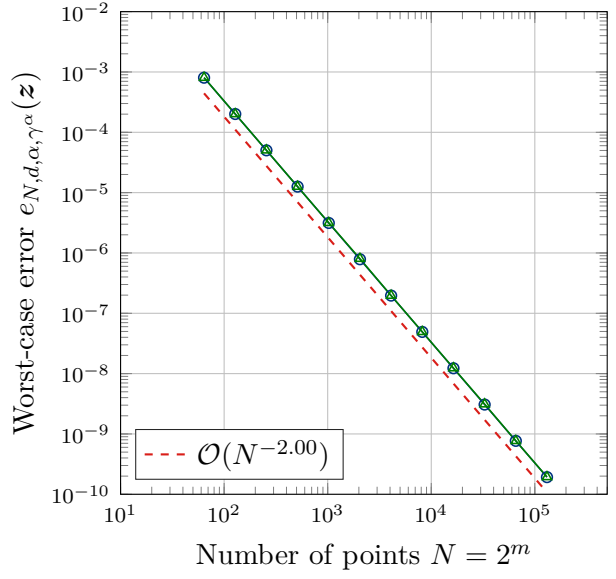
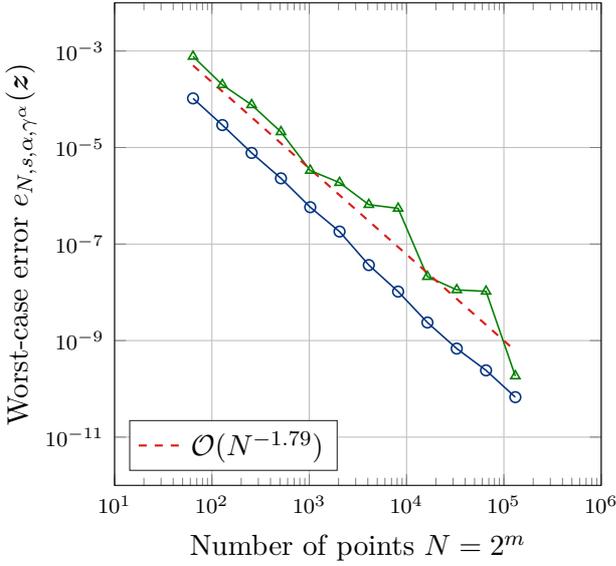
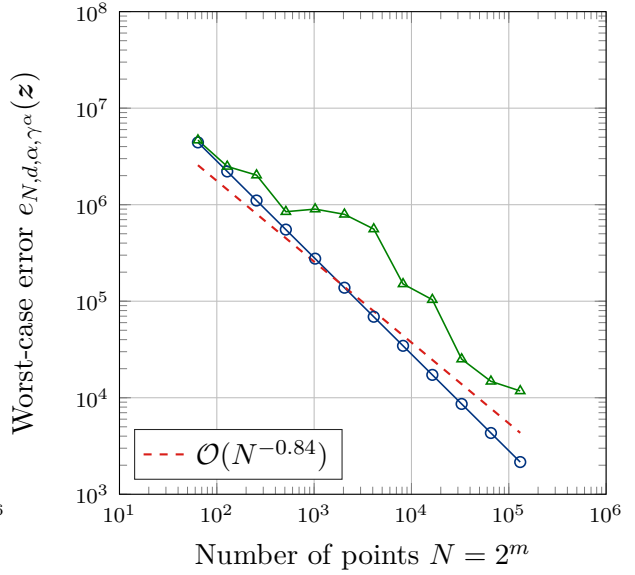

\newpage

The results in Figures \ref{fig:red_cbc_dbd_1} and \ref{fig:red_cbc_dbd_2} show that the reduced fast CBC-DBD algorithm constructs generating vectors of good lattice rules which have worst-case errors that are essentially comparable to those of lattice rules obtained by the non-reduced fast CBC-DBD algorithm. We observe asymptotic error rates for both algorithms considered. Only in Part (a) of Figures \ref{fig:red_cbc_dbd_1} and \ref{fig:red_cbc_dbd_2}, respectively, the errors of the lattice rules obtained 
by the reduced construction seem to be significantly higher than those of the non-reduced case. This error behavior can be explained as follows. As illustrated in Figures \ref{fig:red_cbc_dbd_1} and \ref{fig:red_cbc_dbd_2}, throughout we use $b= 2$ and $\alpha = 2$. Corollary \ref{cor:main-result-red_dbd} assures independence of 
the dimension $d$ whenever the chosen weights $\gamma_j$ satisfy 
\begin{equation}\label{eqn_cor}
\sum_{j\geq 1} \gamma_j b^{w_j} = \sum_{j\geq 1} \gamma_j 2^{w_j} < \infty.
\end{equation}
However, the sequence of weights $\gamma_j = 1/j^3$ does not always satisfy Condition \eqref{eqn_cor} if we 
choose the reduction indices as $w_j=\lfloor p \log_2 j \rfloor$, since
\[
\sum_{j\geq1} \gamma_j 2^{w_j} = \sum_{j\geq 1} j^{-3} 2^{ \lfloor p \log_2 j\rfloor} \ge  \sum_{j\geq 1} j^{-3} 2^{ p \log_2 j - 1} \ge 
\frac{1}{2}\sum_{j\geq 1} j^{p-3}.
\]
The latter series only convergence if whenever $3-p > 1$, and this is not
satisfied for our choices of $p$ made in Figures \ref{fig:red_cbc_dbd_1} and \ref{fig:red_cbc_dbd_2}, 
which is $p=2$ and $p=7/2$, respectively. Thus, this gives rise to the difference in the errors obtained by the reduced CBC-DBD and non-reduced CBC-DBD algorithms.
This phenomenon is to be expected and shows that the reduction indices and the weights must be balanced carefully, and that in general the reduced algorithm 
works better for situations where the weights $\gamma_j$ decay fast.

\subsection{Computational complexity}
Here, we illustrate the computational complexity of the reduced fast CBC-DBD construction in Algorithm \ref{alg:fast-redcbcdbd} 
which was proved in Theorem~\ref{thm:cost-redcbcdbd}. For this purpose, let $N=2^m$, let the weight sequence $\bsgamma = (\gamma_j)_{j\geq 1}$ be given by 
$\gamma_j = (0.95)^j$, and let the reduction indices be given by $w_j = \lfloor \frac{3}{2}\log_2j\rfloor$ for $j\ge 1$. We measure and compare the computation times of implementations of Algorithm~\ref{alg:fast-redcbcdbd} and the non-reduced fast CBC-DBD algorithm for lattice rules 
(see \cite{EKNO21} for details on the implementation of the latter). Note that the chosen weight sequence does not affect the computation times. The timings were performed on an Intel Core i5 CPU with 2.3 GHz using Python 3.6.3.
\begin{table}[H]
	\captionof{table}{Computation times (in seconds) for constructing the generating vector $\bsz$ of a lattice rule with $N=2^m$ points in $d$ dimensions using the 
	reduced CBC-DBD algorithm (\textbf{bold font}) and the non-reduced CBC-DBD construction (normal font) with $\alpha = 2$, $\gamma_j = (0.95)^j$, and $w_j = \lfloor \frac{3}{2} \log_2j\rfloor$.} 
	\label{tab:red_dbd_times}
	\centering
\begin{tabular}{p{2cm}p{2cm}p{2cm}p{2cm}p{2cm}p{2cm}} 
\toprule[1.2pt]
& $d=50$ & $d=100$ & $d=500$ & $d=1000$ & $d=2000$ \\ 
\toprule[1.2pt]
\multirow{2}{6em}{$m=10$}
& 0.077 & 0.154 & 0.799 & 1.571 & 3.182 \\
& \textbf{0.017} & \textbf{0.018} & \textbf{0.02} & \textbf{0.021} & \textbf{0.025} \\
\midrule
\multirow{2}{6em}{$m=12$}
& 0.128 & 0.252 & 1.224 & 2.424 & 4.908 \\
& \textbf{0.035} & \textbf{0.046} & \textbf{0.052} & \textbf{0.054} & \textbf{0.057} \\
\midrule
\multirow{2}{6em}{$m=14$}
& 0.211 & 0.415 & 2.044 & 4.049 & 8.256 \\
& \textbf{0.066} & \textbf{0.089} & \textbf{0.138} & \textbf{0.136} & \textbf{0.141} \\
\midrule
\multirow{2}{6em}{$m=16$}
& 0.43 & 0.874 & 4.363 & 8.796 & 17.631 \\
& \textbf{0.103} & \textbf{0.152} & \textbf{0.299} & \textbf{0.354} & \textbf{0.359} \\
\midrule
\multirow{2}{6em}{$m=18$}
& 1.467 & 2.982 & 14.924 & 30.545 & 59.967 \\
& \textbf{0.195} & \textbf{0.272} & \textbf{0.577} & \textbf{0.761} & \textbf{0.904} \\
\midrule
\multirow{2}{6em}{$m=20$}
& 7.222 & 14.538 & 73.21 & 147.759 & 294.616 \\
& \textbf{0.5} & \textbf{0.623} & \textbf{1.146} & \textbf{1.516} & \textbf{1.931} \\
\midrule
\end{tabular}
\end{table}
In Table \ref{tab:red_dbd_times} we display the computation times for the construction of the generating vector $\bsz$ via the two considered algorithms, where for the reduction indices we use $w_j = \lfloor \frac{3}{2}\log_2 j \rfloor$. We emphasize that the used algorithms solely construct the generating vector $\bsz$ but do not calculate the worst-case error $e_{N,d,\alpha,\bsgamma}(\bsz)$, which allows for an unbiased comparison between the considered algorithms. Table \ref{tab:red_dbd_times} illustrates a dramatic difference in the computational cost between the non-reduced fast CBC-DBD construction and the reduced fast CBC-DBD construction. The extent of the speed-up depends on the chosen reduction indices $w_j$. Note, however, that the reduction indices have 
to be chosen such that they are balanced with the weights, in order to guarantee useful error convergence.

\section{Conclusion}
In this paper, we have presented a combination of the CBC-DBD algorithm introduced in \cite{EKNO21} and the reduced construction method in \cite{DKLP15} for constructing good lattice rules for numerical integration in weighted Korobov spaces. In particular, we have aimed to gain from the reduced construction method to shrink the computational cost as compared to the non-reduced CBC-DBD algorithm. We showed that the reduced CBC-DBD construction with quality measure independent of the smoothness parameter $\alpha$, similarly to \cite{EKNO21} also formulated for product weights, yields lattice rules which admit error convergence rates that are arbitrarily close to the optimal convergence order. 
We remark that there has recently been considerable interest in finding algorithms that guarantee some degree of universality with respect to the smoothness parameter 
and/or the weights (see, e.g., \cite{DG21}), and also our result can be seen as a step in this direction. 
Furthermore, the errors can be bounded independently of the dimension if the weights satisfy suitable summability conditions. In addition to these theoretical results, we have derived a fast implementation of the considered algorithm. Numerical tests confirm our main findings.

\bigskip

\bigskip

\begin{small}
	\noindent\textbf{Authors' addresses:}\\
	
	\noindent Peter Kritzer\\
	Johann Radon Institute for Computational and Applied Mathematics (RICAM)\\
	Austrian Academy of Sciences\\
	Altenbergerstr. 69, 4040 Linz, Austria.\\
	\texttt{peter.kritzer@oeaw.ac.at}\\
	
	\noindent Onyekachi Osisiogu\\
	Department of Applied Mathematics\\
	Illinois Institute of Technology\\
	Chicago, IL USA.\\
    \texttt{oosisiogu@iit.edu}\\
    and\\
    Johann Radon Institute for Computational and Applied Mathematics (RICAM)\\
	Austrian Academy of Sciences\\
	Altenbergerstr. 69, 4040 Linz, Austria.\\
\end{small}

\end{document}